\numberwithin{equation}{section}
\newtheorem{thm}{Theorem}[section]
\newtheorem{prop}[thm]{Proposition}
\newtheorem{cor}[thm]{Corollary}
\newtheorem{lemma}[thm]{Lemma}
\newtheorem{claim}[thm]{Claim}
\theoremstyle{definition}
\newtheorem{defn}[thm]{Definition}
\newtheorem{notn}[thm]{Notation}
\newtheorem{rmk}[thm]{Remark}
\newtheorem{Empty}[thm]{}
\newcommand{\bi}{\begin{itemize}}  
\newcommand{\ei}{\end{itemize}}
\newcommand{\bp}{\begin{proof}}
\newcommand{\ep}{\end{proof}}
\def\AA{\mathbb{A}}
\def\CC{\mathbb{C}}
\def\GG{\mathbb{G}}
\def\PP{\mathbb{P}}
\def\QQ{\mathbb{Q}}
\def\RR{\mathbb{R}}
\def\ZZ{\mathbb{Z}}
\def\VV{\mathbb{V}}
\def\ov{\overline}
\def\bG{\mathbb{G}}
\def\bP{\mathbb{P}}
\def\al{\alpha}
\def\be{\beta}
\def\de{\delta}
\def\eps{\epsilon}
\def\la{\lambda}
\def\om{\omega}
\def\si{\sigma}
\def\De{\Delta}
\def\Ga{\Gamma}
\def\Si{\Sigma}
\def\cA{\mathcal{A}}
\def\cC{\mathcal{C}}
\def\cE{\mathcal{E}}
\def\cF{\mathcal{F}}
\def\cL{\mathcal{L}}
\def\cM{\mathcal{M}}
\def\cO{\mathcal{O}}
\def\cT{\mathcal{T}}
\def\cTT{\tilde{\mathcal{T}}}
\def\cS{\mathcal{S}}
\def\cU{\mathcal{U}}
\def\ocM{\overline{\mathcal{M}}}
\def\codim{\text{codim}}
\def\Bl{\text{\rm Bl}}
\def\Ext{\text{\rm Ext}}
\def\Sym{\text{\rm Sym}}
\def\Hom{\text{\rm Hom}}
\def\HH{\text{H}}
\def\LM{\overline{\text{LM}}}
\def\M{\overline{\text{\rm M}}}
\def\PGL{\text{\rm PGL}}
\def\Pic{\text{\rm Pic}}
\def\Proj{\text{\rm Proj}}
\def\Spec{\text{Spec}}
\def\cV{\mathcal{V}}
\def\wt{\text{weight}}
\def\ba{\mathbf{a}}
\def\dra{\dashrightarrow}
\def\hra{\hookrightarrow}
\def\ra{\rightarrow}
\newcommand{\sslash}{\mathbin{/\mkern-6mu/}}
\title{Exceptional collections on certain Hassett spaces}
\author{Ana-Maria Castravet}
\address{Universit\'e Paris-Saclay, UVSQ, CNRS, Laboratoire de Math\'ematiques de Versailles, 78000, Versailles, France}
\email{ana-maria.castravet@uvsq.fr}
\author{Jenia Tevelev}
\address{Department of Mathematics and Statistics, University of Massachusetts Amherst, 710 North Pleasant Street, Amherst, MA 01003, USA, and 
Laboratory of Algebraic Geometry and its Applications, HSE, Moscow, Russia}
\email{tevelev@math.umass.edu}
\begin{document}


\removeabove{0.5cm}
\removebetween{0.5cm}
\removebelow{0.6cm}

\maketitle

\begin{prelims}

\DisplayAbstractInEnglish

\smallskip

\DisplayKeyWords

\smallskip

\DisplayMSCclass

\medskip

\languagesection{Fran\c{c}ais}

\smallskip

\DisplayTitleInFrench

\smallskip

\DisplayAbstractInFrench

\end{prelims}


\newpage

\setcounter{tocdepth}{2}

\tableofcontents


\section{Introduction}

A conjecture of Manin and Orlov states that Grothendieck-Knudsen moduli space $\ocM_{0,n}$ of stable, rational curves with $n$ markings admits a full, exceptional collection which is invariant (as a set) under the action of the symmetric group $S_n$ permuting the markings. The conjecture has been proved by the authors in \cite{CT_partII} by reducing it to the similar statement for several Hassett spaces, one of which is the space under consideration in this paper. While the proof presented in  \cite{CT_partII} for other needed Hassett spaces is valid in this particular case as well, it was not discussed in  \cite{CT_partII} and we prefer to give a different and much simpler proof here.

\smallskip

For a vector of rational weights $\mathbf{a}=(a_1,\ldots,a_n)$ with $0<a_i\le1$ and $\sum a_i>2$, the Hassett space
$\M_{\mathbf{a}}$ is the moduli space of weighted pointed stable rational curves, i.e.,~pairs $(C,\sum a_ip_i)$ with slc singularities, such that 
$C$ is a genus~$0$, at worst nodal, curve and the  $\QQ$-line bundle $\omega_C(\sum a_ip_i)$ is ample. For example, $\ocM_{0,n}=\M_{(1,\ldots,1)}$.
There exist birational reduction morphisms $\M_{\mathbf{a}}\to\M_{\mathbf{a'}}$ every time the weight vectors are such that $a_i\ge a_i'$ for every~$i$. 

\smallskip 

Understanding  the derived categories of the Hassett spaces $\M_{\mathbf{a}}$ was considered in the work of Ballard, Favero and Katzarkov \cite{BFK}, and earlier, for 
$\ocM_{0,n}$ in the work of Manin and Smirnov \cite{ManinSmirnov1} (see also \cite{Smirnov Thesis,ManinSmirnov2}). However, here we consider a modified question. If $\Ga_{\mathbf{a}}\subseteq S_n$ denotes the stabilizer of the set of weights ${\mathbf{a}}$, we ask whether there exists a full, $\Ga_{\mathbf{a}}$-invariant exceptional collection on $\M_{\mathbf{a}}$. Theorem \cite[Theorem 1.5] {CT_partII} reduces the existence of such collections on $\ocM_{0,n}$, as well as many other Hassett spaces 
$\M_{\mathbf{a}}$, to the following cases: 
\begin{itemize}
\item[(I)] The Losev--Manin spaces $\M_{\mathbf{a}}$, where ${\mathbf{a}}=(1,1,\epsilon,\ldots,\epsilon)$, $0<\epsilon\ll1$. 
\item[(II)] The Hassett spaces $\M_{p,q}$, for $p+q=n$ ($q\geq0$, $p\geq2$) having $p$ \emph{heavy} weights and $q$ \emph{light} weights with the following properties:
\begin{equation}\label{Mpq}
a_1=\ldots=a_p=a+\eta,\quad a_{p+1}=\ldots=a_n=\epsilon,\quad 
pa+q\epsilon=2,
\end{equation}
where  $0<\eta,\epsilon\ll1.$
\end{itemize}

\smallskip 

To reduce to the above cases, the authors were inspired by results of Bergstrom and Minabe \cite{BergstromMinabe, BergstromMinabeLM} that used reduction maps between Hassett spaces. The existence of a full, invariant, exceptional collection in case (I) was proved in \cite{CT_partI}. 
The work in \cite{CT_partII} proves the statement for the spaces $\M_{p,q}$ in (II) with $p\ge3$ and is the most difficult part of the argument. The current paper treats the spaces $\M_{p,q}$ in (II) with $p=2$. We emphasize that this case is not explicitly proved in \cite{CT_partII}. 
However, the proof for $p>2$ is valid even when $p=2$.  The proof for $p>2$ requires a lot of different comparisons between different Hassett spaces. 
Here we prove that this can be avoided when $p=2$. 
More precisely, the main space under consideration when $p=2$ is the following: 

\begin{notn}\label{Z}
Let $Z_N$ denote the Hassett moduli space of rational curves with markings $N\cup\{0,\infty\}$ with weights of markings $0$ and $\infty$ equal to 
$\frac{1}{2}+\eta$ and the markings from $N$ equal to $\frac{1}{n}$, where $0<\eta\ll1$. We also write $Z_n:=Z_N$ for $n=|N|$ when there is no ambiguity. 
\end{notn}
The condition on the weights is equivalent to the condition (\ref{Mpq}) for $p=2$ (in which case, $a=1-\frac{(n-2)\epsilon}{2}$). Explicitly, 
all light points may coincide with one another and one heavy point may coincide with at most $\lfloor\frac{n-1}{2}\rfloor$ heavy points. 
We have the following description:

\begin{thm}\label{Z description}
When $n$ is odd, the space $Z_n$ is isomorphic to the symmetric GIT quotient 
$Z_n=(\PP^1)^n\sslash_{\cO(1,\ldots,1)}\GG_m$, 
with respect to the diagonal action of $\GG_m$ on 
$(\PP^1)^n$, coming from  $\GG_m$ acting on $\PP^1$ by $z\cdot [x,y]=[zx,z^{-1}y]$.
When $n$ is even, $Z_n$ is isomorphic to the Kirwan desingularization of the same GIT quotient. 
\end{thm}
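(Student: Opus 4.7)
The plan is to derive both isomorphisms by comparing Hassett stability directly with the Hilbert--Mumford numerical criterion for the $\GG_m$-action. First I would analyze the combinatorics of $Z_n$-stability. The cap $\sum a_i\le 1$ at any smooth point forces a heavy marking to coincide with at most $\lfloor(n-1)/2\rfloor$ light markings. Positivity of $\omega_C+\sum a_ip_i$ on each component, together with the total light weight being $1<2$, forces every component of a stable curve to contain a heavy marking; since there are only two heavy markings, each stable curve is either irreducible or consists of two components meeting in a single node, one carrying $0$ and some $k$ light markings, the other carrying $\infty$ and the remaining $n-k$. Positivity on both sides requires $k>n/2-n\eta$ and $n-k>n/2-n\eta$; this is impossible for $n$ odd and forces $k=n/2$ for $n$ even.

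Second, sending $(p_1,\ldots,p_n)\mapsto(\PP^1,0,\infty,p_1,\ldots,p_n)$ defines a $\GG_m$-invariant morphism $\Phi:X^s\to Z_n$, where $X^s\subseteq(\PP^1)^n$ is the GIT-stable locus: by Hilbert--Mumford applied to $\lambda(t)=t^{\pm1}$, GIT-stability with respect to $\cO(1,\ldots,1)$ and the action $z\cdot[x,y]=[zx,z^{-1}y]$ is exactly the condition that fewer than $n/2$ entries lie at each of $0,\infty$, i.e.,~the $Z_n$-stability condition. Since any automorphism of $(\PP^1,0,\infty)$ is an element of $\GG_m$, $\Phi$ descends to an injective morphism $\overline\Phi:X^s/\GG_m\hookrightarrow Z_n$, whose image is the open locus of irreducible Hassett curves. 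For $n$ odd this locus is all of $Z_n$ by the first step; stabilizers on $X^s$ are trivial (any stable configuration has at least one $p_i$ outside $\{0,\infty\}$), so both source and target are smooth projective of dimension $n-1$, and the bijective morphism $\overline\Phi$ is an isomorphism by Zariski's Main Theorem. Since $X^{ss}=X^s$ for $n$ odd, the source is the GIT quotient.

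For $n$ even, the strictly semistable locus $X^{ss}\setminus X^s$ consists of configurations with exactly $n/2$ entries at one of the fixed points, and its closed $\GG_m$-orbits are the finitely many fixed points $q_S$ (with $p_i=0$ for $i\in S$ and $p_i=\infty$ for $i\notin S$, $|S|=n/2$), projecting to isolated singularities of $(\PP^1)^n\sslash\GG_m$. By Luna's slice theorem the singularity at each $q_S$ is \'etale-locally the affine cone over the Segre variety $\PP^{n/2-1}\times\PP^{n/2-1}$, because $T_{q_S}(\PP^1)^n$ carries $n/2$ weights $+2$ and $n/2$ weights $-2$; the Kirwan blowup resolves each singularity with exceptional divisor $\PP^{n/2-1}\times\PP^{n/2-1}$. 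On the Hassett side, the extension of $\overline\Phi$ to a contraction $Z_n\to(\PP^1)^n\sslash\GG_m$, obtained by collapsing each bubble onto the heavy marking it carries, blows down each bubble divisor onto the corresponding $q_S$; identifying each bubble divisor with $\PP^{n/2-1}\times\PP^{n/2-1}$ by separately parameterizing the bubble and the main component then pins down $Z_n$ as the Kirwan desingularization. The main anticipated obstacle is precisely this last identification---a local computation comparing normal bundles and verifying that the Hassett resolution literally coincides with Kirwan's rather than some different small modification---which is where the specific weight vector is genuinely used.
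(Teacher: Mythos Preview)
For $n$ odd your argument is essentially the paper's: both produce the isomorphism by reading a configuration $(p_1,\ldots,p_n)\in(\PP^1)^n_{ss}$ as a Hassett-stable pointed curve $(\PP^1,0,\infty,p_1,\ldots,p_n)$, and then invoking the categorical property of the quotient together with bijectivity on closed points between smooth varieties. One small imprecision: stabilizers on $X^s$ are not trivial but equal to $\{\pm1\}\subset\GG_m$, since $-1$ acts trivially on every $\PP^1$; the paper handles this by passing to $P\GG_m=\GG_m/\{\pm1\}$, which then acts freely on the stable locus, so your conclusion is unaffected.

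For $n$ even the approaches diverge, and yours has the gap you yourself flag. You construct a contraction $Z_n\to(\PP^1)^n\sslash\GG_m$ and then attempt to match $Z_n$ with the Kirwan blowup as two resolutions of the same singular target having isomorphic exceptional divisors $\PP^{n/2-1}\times\PP^{n/2-1}$; but, as you note, this does not by itself rule out a different small modification, and you do not carry out the local comparison that would close the argument. The paper sidesteps this entirely by going in the opposite direction. It blows up $(\PP^1)^n$ at the strictly semistable points $\{p_T\}$ to obtain $W$, and then builds an explicit Hassett-stable family over $W_{ss}$: blow up $W\times\PP^1$ along the loci $E_T\times\{0\}$ and $E_T\times\{\infty\}$, and contract the middle component of each resulting three-component fiber using the relative dualizing sheaf. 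This produces a $\GG_m$-equivariant morphism $W_{ss}\to Z_n$, hence by the categorical property a morphism $\tilde\Si_n\to Z_n$ from the Kirwan resolution itself, which is then a bijection on closed points between smooth projective varieties and therefore an isomorphism. The virtue of this route is that it manufactures a map \emph{between} the two candidate resolutions directly, so no comparison through the singular base is ever needed.
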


Theorem \ref{Z description} for $n$ odd is stated in \cite{Ha} within a more general set-up. 
Theorem \ref{Z description} for $n$ even is a direct consequence of \cite{Ha}. For the reader's convenience, 
we give the proofs in Lemma \ref{identify} and Lemma~\ref{identify2}. 

\smallskip

The group $S_2\times S_n$ acts on $Z_n$ by permuting $0$, $\infty$, and the markings from $N$ respectively. In a similar fashion, 
the Losev--Manin space $\LM_N$ (or $\LM_n$, for $n=|N|$) of dimension $(n-1)$ is the Hassett space with weights 
$(1,1,\epsilon,\ldots,\epsilon)$, 
with markings from $N\cup\{0,\infty\}$ with the weights of $0$, $\infty$ equal to $1$, while markings from $N$ are equal to $\epsilon$, with $0<\epsilon\ll1$. 
The space $\LM_N$ is isomorphic to an iterated blow-up of $\PP^{n-1}$ along points $q_1,\ldots, q_n$ in linearly general position, and all linear subspaces spanned by $\{q_i\}$. In particular, $\LM_n$ is a toric variety. The action of $S_n$ permuting the markings from $N$ corresponds to a relabeling of the points 
$\{q_i\}$, while the action of $S_2$, permuting $0$, $\infty$, corresponds, at the level of $\PP^{n-1}$, to a Cremona transformation with center at the points
$\{q_i\}$. There is a birational $S_2\times S_N$-equivariant morphism, reducing  the weights of $0$ and $\infty$:
$p: \LM_N\ra Z_N$.  
In particular, $Z_N$ is also a toric variety. Our main theorem is the following:

\begin{thm}\label{main}
The Hassett space $Z_n$
has a full exceptional collection which is invariant under the action of $(S_2\times S_n)$. 
In particular, the K-group $K_0(Z_n)$ is a permutation 
$(S_2\times S_n)$-module.
\end{thm}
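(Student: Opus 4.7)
The plan is to combine the GIT description of $Z_n$ given by Theorem \ref{Z description} with the theory of windows in derived categories, and to use the Losev--Manin space $\LM_n$ as an external certificate of fullness.

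\textbf{Step 1 (setting the stage).} I would transfer the problem to the equivariant derived category $D^b_{\GG_m}((\PP^1)^n)$, or to its Kirwan-blown-up variant when $n$ is even, via Theorem \ref{Z description}. The $S_2\times S_n$-action lifts: $S_n$ permutes the factors of $(\PP^1)^n$, while $S_2$ acts by the simultaneous involution $[x,y]\mapsto[y,x]$, which normalizes $\GG_m$ and inverts its characters. Thus any collection that is $S_n$-symmetric in the multi-degree of line bundles and symmetric around $0$ in the $\GG_m$-weight will automatically be $(S_2\times S_n)$-invariant.

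\textbf{Step 2 (exceptionality via windows).} On the quotient stack $[(\PP^1)^n/\GG_m]$ the natural candidates are the twisted Beilinson line bundles $\cO(k_1,\ldots,k_n)\otimes\chi_d$ with $k_i\in\{0,1\}$ and $d\in\ZZ$, obtained by combining the Beilinson collection $\{\cO,\cO(1)\}$ on each $\PP^1$ factor with the characters of $\GG_m$. I would fix an $(S_2\times S_n)$-symmetric window $\cW$ consisting of complexes whose $\GG_m$-weights lie in an interval $[-w,w]$, with $w$ determined by the sum of $\GG_m$-weights on the conormal bundles of the unstable strata (equal to $n$ here). Halpern-Leistner's window theorem identifies $\cW$ with $D^b(Z_n)$ in the odd case, and with the analogous Kirwan-corrected subcategory in the even case. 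I would then restrict the set of twisted Beilinson line bundles to those lying in $\cW$; this restricted set is visibly $(S_2\times S_n)$-stable, and semi-orthogonality inside $\cW$ reduces, via the window equivalence, to the vanishing of the $\GG_m$-invariant part of $H^*((\PP^1)^n,\cO(-k_1,\ldots,-k_n))$ after twisting by characters, a direct K\"unneth-plus-weight-decomposition computation.

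\textbf{Step 3 (fullness via Losev--Manin).} The reduction morphism $p:\LM_n\to Z_n$ is $(S_2\times S_n)$-equivariant; its exceptional locus consists of boundary divisors in $\LM_n$ recording coincidences of a heavy marking $0$ or $\infty$ with subsets of light markings, allowed in $Z_n$ but forbidden in $\LM_n$. Since both spaces are toric, hence have rational singularities, and $p$ is a composition of toric blowups, $Rp_*\cO_{\LM_n}=\cO_{Z_n}$, so $Rp_*$ carries a generating set of $D^b(\LM_n)$ to a generating set of $D^b(Z_n)$. Applying this to the full $(S_2\times S_n)$-invariant exceptional collection on $\LM_n$ constructed in \cite{CT_partI} and comparing numerical classes in $K_0$, I would conclude that the semi-orthogonal collection produced in Step 2 is in fact full.

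\textbf{Main obstacle.} The delicate point is the even case: the Kirwan desingularization forces both the window and the exceptional collection to be enlarged by blocks supported on the new exceptional divisor of the symmetric semistable locus, and these enlargements must be arranged $(S_2\times S_n)$-equivariantly without destroying semi-orthogonality with the bulk collection. Once a full invariant exceptional collection is exhibited, the assertion that $K_0(Z_n)$ is a permutation $(S_2\times S_n)$-module is automatic, since the classes of the collection form a permutation basis.
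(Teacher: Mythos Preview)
Your overall architecture---windows for exceptionality, push-forward from $\LM_n$ for fullness---is exactly what the paper does, and Steps 1 and 2 are essentially correct in outline (modulo the minor point that the window width depends on the stratum: for the stratum $S_I$ the width is $\eta_I=2|I|$, not a single number ``equal to $n$'').

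The genuine gap is in Step~3. Knowing that $Rp_*$ of a generating set generates, and then ``comparing numerical classes in $K_0$'', does \emph{not} prove fullness: an exceptional collection whose length equals $\mathrm{rk}\,K_0$ can still have a nontrivial orthogonal complement (a phantom). What you must actually do is show that each object $Rp_*(\pi_K^*\hat G)$ coming from the Losev--Manin collection lies in the triangulated subcategory generated by your window collection. The paper does this in two nontrivial steps. First (Section~5) it computes these push-forwards explicitly: many vanish, the survivors are line bundles $\cO(-E)\otimes z^p$ (odd case) or such line bundles twisted by boundary plus certain torsion sheaves on $\PP^s\times\PP^s$ (even case). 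Second (Theorems~3.3 and~4.11) it shows by an induction on a ``score'' $|p|+\min(e,n-e)$, using Koszul resolutions of the unstable intersections $\bigcap_{i\in I}\Delta_{i0}$ and $\bigcap_{i\in I}\Delta_{i\infty}$ restricted to the semistable locus, that \emph{all} $\cO(-E)\otimes z^p$ are generated by the ones in the window. In the even case the same induction must simultaneously produce the missing torsion sheaves $\cO_{\delta_T}(-a,0)$ and $\cO_{\delta_T}(0,-b)$ beyond the initial range, which is the delicate interlocking you flag as the main obstacle. None of this is visible at the level of $K_0$; you need the actual resolutions.
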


Theorem \ref{main} is the immediate consequence of Theorem \ref{odd} (case of $n$ odd) and Theorem \ref{even} (case of $n$ even). 
We now describe the collections. 
\begin{defn}\label{tautological}
If $(\pi:\cU\ra\M, \si_1,\ldots, \si_n)$ is the  universal family over the Hassett space $\M$, 
one defines tautological classes 
$$\psi_i:=\si_i^*\om_{\pi},\quad \de_{ij}=\si_i^*\si_j.$$
Note that when $n$ is odd, we have $\psi_0+\psi_{\infty}=0$ on $Z_n$. For other relations, including the case when $n$ is even, see
Section \ref{Hassett}.
\end{defn}

\begin{defn}\label{L odd case}
Assume $n$ is odd. 
Let $E\subseteq N$ and $p\in\ZZ$, such that if $e=|E|$ we have that $p+e$ is even. We define line bundles on $Z_n$ as follows:
$$L_{E,p}:=-\left(\frac{e-p}{2}\right)\psi_{\infty}-\sum_{j\in E}\de_{j\infty}.$$
As sums of $\QQ$-line bundles,  
$L_{E,p}=\frac{p}{2}\psi_{\infty}+\frac{1}{2}\sum_{j\in E} \psi_j=-\frac{p}{2}\psi_0+\frac{1}{2}\sum_{j\in E} \psi_j$. 
In particular, the action of $S_2$ exchanges $L_{E,p}$ with $L_{E,-p}$. 
The line bundles $L_{E,p}$ are natural from the GIT point of view, see (\ref{odd translate}).  
\end{defn}

\begin{thm}\label{odd}
Let $n=2s+1$ odd. The line bundles $\{L_{E,p}\}$ (Definition \ref{L odd case}) form a full, $(S_2\times S_n)$ invariant exceptional collection in $D^b(Z_n)$ under the condition:
$$|p|+\min(e,n-e)\leq s,\quad \text{where}\quad e=|E|,\quad p+e\quad \text{even}.$$
The line bundles are ordered by decreasing $e$, and for a fixed $e$, arbitrarily. 
\end{thm}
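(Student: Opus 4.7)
The plan is to use the GIT description of $Z_n$ from Theorem~\ref{Z description}, namely $Z_n=(\PP^1)^n\sslash_{\cO(1,\ldots,1)}\GG_m$, and to invoke the windows theorem in derived categories (Halpern--Leistner, Ballard--Favero--Katzarkov, Teleman). First, I would lift each $L_{E,p}$ to an explicit $\GG_m$-equivariant line bundle $\widetilde{L}_{E,p}$ on $(\PP^1)^n$: the underlying bundle is $\cO_{(\PP^1)^n}(\mathbf{1}_E)$---that is, $\cO(1)$ on the factors indexed by $E$ and $\cO$ elsewhere---equipped with a $\GG_m$-linearization of total weight encoded by $p$. The parity condition $p+e$ even records exactly the integrality of the valid linearization lattice that descends to the quotient.

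Second, with this lift in hand I would identify the window. The Kirwan--Ness stratification of the unstable locus in $(\PP^1)^n$ consists of two strata $S_\pm$, where $S_+$ consists of configurations with at least $s+1$ markings equal to $[0,1]$ and $S_-$ is symmetric, with destabilizing $1$-PS's $t\mapsto t^{\pm 1}$. A direct calculation of the $\GG_m$-weights of the normal bundle along the fixed loci of these $1$-PS's singles out a ``magic window'' in equivariant weight space; translated to the $(E,p)$ parametrization, the window condition is precisely the stated inequality $|p|+\min(e,n-e)\le s$. Since for $n$ odd the semistable locus equals the stable locus, Halpern--Leistner's theorem then yields a fully faithful embedding of the magic window inside $D^b_{\GG_m}((\PP^1)^n)$ into $D^b(Z_n)$.

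Third, exceptionality inside the window reduces to the equivariant computation
\[
\RHom_{Z_n}(L_{E',p'},L_{E,p})\;=\;\bigl(\RHom_{(\PP^1)^n}(\widetilde{L}_{E',p'},\widetilde{L}_{E,p})\bigr)^{\GG_m}.
\]
By K\"unneth the right-hand side decomposes as a tensor product indexed by $N$ of cohomologies $\HH^\bullet(\PP^1,\cO_{\PP^1}(a_j))$ with $a_j\in\{-1,0,1\}$, each carrying an explicit $\GG_m$-weight, and taking invariants isolates a single weight space. The vanishing demanded by the ordering ``decreasing $e$'' then amounts to a direct combinatorial check showing that no term contributes when $e'>e$; orthogonality within a fixed $e$ follows identically, with the $S_2$-symmetry $L_{E,p}\lra L_{E,-p}$ halving the cases to verify.

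Finally, fullness would come from combining two inputs: the $(S_2\times S_n)$-equivariant reduction morphism $\LM_n\to Z_n$ together with the full invariant exceptional collection on $\LM_n$ from \cite{CT_partI}, and a matching count showing that the number of pairs $(E,p)$ with $|p|+\min(e,n-e)\le s$ and $p+e$ even equals the rank of $K_0(Z_n)$, which for the smooth toric variety $Z_n$ is readable from its fan. Once the count matches, the window embedding is automatically an equivalence and the semi-orthogonal collection is full. The main technical obstacle I anticipate is the bookkeeping for the equivariant lift $\widetilde{L}_{E,p}$: pinning down conventions so that the weight parameter $p$ tracks the $\GG_m$-character exactly and the window inequality lands on the stated condition on the nose; after that, the Ext vanishing, the $S_2\times S_n$-invariance, and the counting are all mechanical consequences of the windows machinery.
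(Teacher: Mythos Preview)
Your overall architecture---windows for exceptionality, the Losev--Manin reduction for fullness---matches the paper's, and the exceptionality half is essentially right. Two fixable inaccuracies: the Kempf--Ness stratification has a stratum $S_I$ for \emph{each} subset $I$ with $|I|>s$ and a stratum $S'_I$ for each $|I|\le s$, not just two strata $S_\pm$ (the window widths $\eta_I=2|I|$ genuinely depend on $I$); and the paper's lift is $\cO(-E)\otimes z^p$ rather than $\cO(\mathbf{1}_E)$. Once the line bundles are shown to lie in a single window $\GG_w$, exceptionality follows immediately because they are already an exceptional collection in $D^b_G((\PP^1)^n)$---your K\"unneth computation is exactly this, and the paper dispatches it in one sentence.

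The fullness argument, however, has a genuine gap. The sentence ``once the count matches, the window embedding is automatically an equivalence and the semi-orthogonal collection is full'' conflates two things. Halpern--Leistner's theorem already gives an \emph{equivalence} $\GG_w\simeq D^b(Z_n)$; the question is whether your exceptional collection generates $\GG_w$. A cardinality match against the rank of $K_0(Z_n)$ does \emph{not} settle this, because of phantom subcategories---the paper itself flags this as an open issue in the introduction. What the paper actually does is: (i) show, via equivariant Koszul resolutions of the empty intersections $\bigcap_{i\in I}\De_{i0}$ and $\bigcap_{i\in I}\De_{i\infty}$ on $(\PP^1)^n_{ss}$ (Lemma~\ref{G Koszul}) and an induction on the score $|p|+\min(e,n-e)$ (Theorem~\ref{full odd}), that the restricted collection generates \emph{all} line bundles $L_{E,p}$ with no score constraint; and (ii) compute (Section~\ref{LM}, Corollary~\ref{rewrite}) that the pushforwards $Rp_*(\pi_K^*\hat\GG)$ of the full Losev--Manin collection are precisely such line bundles, which generate $D^b(Z_n)$ by adjunction (Corollary~\ref{S is enough}). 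Your proposal names the Losev--Manin input but does not say how it connects to the collection, and it is missing the Koszul-resolution generation step entirely; without that step, or an independent no-phantom argument for $Z_n$, fullness does not follow.
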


The collection in Theorem \ref{odd} is the dual of the collection in \cite[Theorem 1.10]{CT_partII} 
for $p=2$, with some of the constraints on the order removed.
See also Remark \ref{elaborate1} for a more precise statement.

\smallskip

Consider now the case when $n=2s+2\geq2$ is even. In this case the universal family over $Z_n$ has reducible fibers. For each partition 
$N=T\sqcup T^c$, $|T|=|T^c|=s+1$, we denote $\de_{T\cup\{\infty\}}\subseteq Z_n$ the boundary component parametrizing nodal rational curve with two components, with markings from $T\cup\{\infty\}$ on one component and $T^c\cup\{0\}$ on the other. Moreover, $\de_{T\cup\{\infty\}}=\PP^s\times\PP^s$ and
we have that $Z_n\ra (\PP^1)^n\sslash_{\cO(1,\ldots,1)}\PGL_2$ 
is a Kirwan resolution of singularities with exceptional divisors $\de_{T\cup\{\infty\}}$. 

\begin{defn}\label{L even case}
Assume $n$ is even. 
Let $E\subseteq N$ and $p\in\ZZ$, such that if $e=|E|$ we have that $p+e$ is even. We define line bundles on $Z_n$ as follows: 
$$L_{E,p}:=-\left(\frac{e-p}{2}\right)\psi_{\infty}-\sum_{j\in E}\de_{j\infty}-\sum_{|E\cap T|-\frac{e-p}{2}>0}\left(|E\cap T|-\frac{e-p}{2}\right)\de_{T\cup\{\infty\}}.$$
The line bundles $L_{E,p}$ are natural from the GIT point of view, see Definition \ref{even translate} and the discussion thereafter. From this point of view,  it is also clear that the action of $S_2$ exchanges $L_{E,p}$ with $L_{E,-p}$. 
\end{defn}

\begin{thm}\label{even}
Assume $n=2s+2$ is even, $s\geq0$. The following form a full, $(S_2\times S_n)$ invariant exceptional collection in $D^b(Z_n)$: 
\bi
\item The torsion sheaves $\cO(-a,-b)$ supported on $\de_{T\cup\{\infty\}}=\PP^s\times\PP^s$, for all $T\subseteq N$, $|T|=|T^c|=s+1$, such that one of the following holds:
\bi
\item $0<a\leq s$, $0<b\leq s$,
\item $a=0$, $0<b<\frac{s+1}{2}$,
\item $b=0$, $0<a<\frac{s+1}{2}$. 
\ei
\item The line bundles $\{L_{E,p}\}$ (Definition \ref{L even case})
under the following condition:
$$|p|+\min(e,n+1-e)\leq s+1,\quad \text{where}\quad e=|E|,\quad p+e\quad \text{even}.$$
\ei
The order is as follows: all torsion sheaves precede the line bundles, the torsion sheaves are arranged in order of decreasing $(a+b)$, while the line bundles are arranged in order of decreasing $e$, and for a fixed $e$, arbitrarily. 
\end{thm}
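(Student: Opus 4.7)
The plan is to deduce Theorem~\ref{even} from the GIT description in Theorem~\ref{Z description} by the method of windows in derived categories, following the philosophy of Halpern-Leistner and Ballard--Favero--Katzarkov. Set $X=(\PP^1)^n$ with the diagonal $\GG_m$-action and linearization $\cO(1,\ldots,1)$. When $n=2s+2$, the strictly semistable locus consists of the closed orbits where exactly $s+1$ markings sit at $0$ and the remaining $s+1$ sit at $\infty$; these are indexed by partitions $N=T\sqcup T^c$ with $|T|=|T^c|=s+1$, and the Kirwan blow-up of $X\sslash\GG_m$ (which is $Z_n$) replaces each such orbit by the exceptional divisor $\de_{T\cup\{\infty\}}\cong\PP^s\times\PP^s$. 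The key observation is that each strictly semistable orbit has stabilizer $\ZZ/2$ (coming from the Weyl element $z\mapsto z^{-1}$), which produces an additional $\ZZ/2$-quotient in the window theorem and is the mechanism responsible for the parity constraint $p+e$ even in Definition~\ref{L even case}.

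For the line bundles $L_{E,p}$, first I would translate Definition~\ref{L even case} into the GIT language: each $L_{E,p}$ descends from a $\GG_m$-equivariant line bundle $\cO_X(\mathbf{d})$ on $X$ with a specific linearization of weight~$p$, where $\mathbf{d}$ encodes $E$ (cf.~the reference to (\ref{odd translate})). Choose the window $\mathcal{W}\subset D^b([X^{ss}/\GG_m])$ consisting of equivariant line bundles whose weight satisfies the bound $|p|+\min(e,n+1-e)\leq s+1$. The sandwich estimate coming from the Kempf--Ness stratification of the unstable locus guarantees that $\mathcal{W}$ is an admissible window for the Kirwan resolution, so that $\Ext$-computations between $L_{E,p}$'s can be pulled back to equivariant $\Ext$-computations on $X$. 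These are then handled by a standard Koszul resolution argument on $(\PP^1)^n$: the $\GG_m$-weights of the relevant cohomology groups are controlled by the Borel--Weil--Bott-type vanishing, and the bound $|p|+\min(e,n+1-e)\leq s+1$ is exactly calibrated so that all negative $\Ext$'s vanish.

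For the torsion sheaves $\cO(-a,-b)$ on each $\de_{T\cup\{\infty\}}\cong\PP^s\times\PP^s$, I would use the Orlov--type semiorthogonal decomposition produced by the Kirwan blow-up: $D^b(Z_n)$ splits into the window $\mathcal{W}$ together with, for each exceptional divisor, a piece equivalent to the derived category of the $\ZZ/2$-equivariant complement on $\PP^s\times\PP^s$. The Beilinson collection $\{\cO(-a,-b)\}_{0\leq a,b\leq s}$ on $\PP^s\times\PP^s$ must be truncated to those pieces of weight consistent with our window, and also symmetrized with respect to the $\ZZ/2$-action that swaps the two factors (this is precisely the $S_2$ acting by $z\mapsto z^{-1}$). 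The asymmetric boundary conditions $a=0$, $0<b<\frac{s+1}{2}$ (and its mirror) reflect which $\ZZ/2$-invariant summands survive in the window decomposition; the remaining pure-$\ZZ/2$-antisymmetric pieces are absorbed into $\mathcal{W}$ itself. Exceptionality among the torsion sheaves follows from Beilinson's collection on each $\PP^s\times\PP^s$, while the vanishing of $\Ext$'s between torsion sheaves and the line bundles $L_{E,p}$ reduces, via adjunction and a normal bundle calculation on $\de_{T\cup\{\infty\}}$, to more Borel--Weil--Bott vanishing after unpacking Definition~\ref{L even case} (the correction terms $\sum(|E\cap T|-\frac{e-p}{2})\de_{T\cup\{\infty\}}$ are put in exactly to make this work).

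For fullness, I would use the birational $(S_2\times S_N)$-equivariant reduction morphism $p\colon\LM_N\to Z_N$. Since $\LM_n$ carries a full $(S_2\times S_n)$-invariant exceptional collection by \cite{CT_partI}, pushing it forward via $\R p_*$ and iteratively reducing modulo the torsion sheaves $\cO(-a,-b)$ on the exceptional divisors gives generators of $K_0(Z_n)$ that can be rewritten as $\ZZ$-combinations of the proposed collection. The main obstacle I expect is the calibration of the window on the Kirwan blow-up: matching the arithmetic constraint $|p|+\min(e,n+1-e)\leq s+1$ with the exact $(a,b)$ ranges on each exceptional divisor so that the resulting pieces form a full, mutually orthogonal, $(S_2\times S_n)$-invariant collection — in particular checking that the asymmetric bound $0<a<\frac{s+1}{2}$ (respectively $0<b<\frac{s+1}{2}$) on the border cases $b=0$ (respectively $a=0$) is consistent with the $S_2$-action swapping $L_{E,p}\leftrightarrow L_{E,-p}$ and simultaneously swapping the two $\PP^s$ factors of $\de_{T\cup\{\infty\}}$.
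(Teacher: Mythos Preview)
Your overall strategy---windows for exceptionality, pushforward from $\LM_N$ for fullness---matches the paper's, but several concrete steps are misidentified and would not go through as written.

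First, the stabilizer description is wrong. The strictly semistable points $p_T\in(\PP^1)^n$ are $\GG_m$-\emph{fixed} points, so their stabilizer is all of $\GG_m$, not $\ZZ/2$; the map $z\mapsto z^{-1}$ is not an element of $\GG_m$ at all but rather the outer $S_2$-action. Consequently the window analysis cannot be run on $(\PP^1)^n$: one must first pass to the Kirwan blow-up $W_n=\Bl_{\{p_T\}}(\PP^1)^n$ and carry out the Kempf--Ness stratification there. The new strata $S^\pm_T$ attached to the exceptional divisors $E_T$ have $\eta^\pm_T=2n$, and the window widths are calibrated accordingly. The parity constraint $p+e$ even is simply the $P\GG_m$-linearization condition, while descent through the residual $\mu_4\subset\GG_m$ stabilizer on $E_T$ forces the specific coefficients $\alpha_{T,E,p}$ of $E_T$ in Definition~\ref{L even case}.

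Second, even once the $L_{E,p}$ are shown to lie in the window $\GG_w\subset D^b_{\GG_m}(W_n)$, they are \emph{not} automatically exceptional there, unlike the odd case on $(\PP^1)^n$. The paper transfers $R\Hom(L_{E',p'},L_{E,p})$ to the weight-$(p'-p)$ part of $R\Gamma_{W_n}\bigl(\cO(E'-E)(\sum(\alpha_T-\alpha'_T)E_T)\bigr)$ and then peels off the exceptional divisors one at a time via short exact sequences; the crux is a combinatorial claim that none of $\pm\alpha_T\pm\alpha'_T$ lands in the relevant weight interval. Your sketch (``Koszul resolution on $(\PP^1)^n$ plus Borel--Weil--Bott'') does not touch the $E_T$-contributions and would not establish vanishing.

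Third, the torsion part is not an Orlov blow-up decomposition filtered by $\ZZ/2$-isotypic pieces. The paper proves exceptionality among the $\cO_{\de_T}(-a,-b)$ by a direct $R\Hom$ computation on $Z_n$ using the normal bundle $\cO(-1,-1)$ (Lemma~\ref{Torsion}), and the pair $\{\cO_{\de_T}(-a,-b),L_{E,p}\}$ is handled by restricting $L_{E,p}^\vee$ to $\de_T$ and invoking the bound $|\alpha_{T,E,p}|\le\lfloor(s+1)/2\rfloor$. The asymmetric boundary ranges on $(a,b)$ come from this inequality, not from a $\ZZ/2$-splitting.

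Finally, your fullness sketch is correct in outline but omits the real work: the pushforwards $Rp_*(\pi_I^*G_a^\vee)$ are not the $L_{E,p}$ themselves but rather auxiliary line bundles (the paper calls them $Q_{E,p}$), and one needs a two-parameter induction on a ``score'' $S(E,p)=|p|+\min(e,n-e)$, together with Koszul resolutions among the $R_{E,p}$'s and $Q_{E,p}$'s on $Z_n$ and a bootstrap that simultaneously generates the missing torsion sheaves $\cO_{\de_T}(-a,0)$ with $a\ge\lceil(s+1)/2\rceil$.
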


The torsion part of the collection in Theorem \ref{even} is the same as the torsion part of the collection in \cite[Theorem 1.15]{CT_partII} for $p=2$.
However, the remaining parts are not the same, nor are they dual to each other, as in the case of Theorem \ref{odd}. 
There is a relationship between the dual collection $\{L^\vee_{E,p}\}$ and the torsion free part of the collection  in \cite[Theorem 1.15] {CT_partII} for $p=2$, but 
this is more complicated -- see Remark \ref{elaborate2} for a precise statement. 

\smallskip

To prove that our collections are exceptional, we use the method of windows \cite{DHL,BFK}. We then use some of the main results of 
\cite[Proposition 1.8, Theorem 1.10]{CT_partI} to prove fullness, by using the reduction map $p:\LM_n\ra Z_n$ in order to compare our collections on $Z_n$ with 
with the push forward of the full exceptional collection on the Losev--Manin space. We emphasize that while in \cite{CT_partII} we prove exceptionality and fullness
on spaces like $Z_N$ indirectly, by working on their contractions (small resolutions of the singular GIT quotient when $n$ is even), 
in this paper we prove both exceptionality and fullness directly, by using the method of windows 
(for $n$ even on the Kirwan resolution, the blow-up of the strictly semistable locus).

\smallskip

As remarked in \cite{CT_partI}, we do not know any smooth projective toric varieties $X$ with an action of a finite group 
$\Gamma$ normalizing the torus action which do not have a $\Gamma$-equivariant exceptional collection $\{E_i\}$ of maximal 
possible length (equal to the topological Euler characteristic of $X$).
From this point of view, the  Losev--Manin spaces $\LM_N$ and their birational contractions $Z_N$ provide evidence that this may be true in general. The existence of such a collection implies that the K-group $K_0(X)$ is a permutation $\Gamma$-module.
In the Galois setting (when $X$ is defined over a field which is not algebraically closed
and $\Gamma$ is the absolute Galois group), an analogous statement was conjectured by Merkurjev and Panin \cite{MP}.
Of course one may further wonder if $\{E_i\}$ is in fact full, which is related to (non)-existence of phantom categories on $X$,
another difficult open question. 

\smallskip

We refer to \cite{CT_Duke,CT_Crelle,CT_rigid} for background information on the birational geometry of $\ocM_{0,n}$, the Losev--Manin space
and other related spaces. 

\smallskip

\noindent{\bf Organization of paper.}
In Section 2 we discuss preliminaries on Hassett spaces and prove some general results on how 
tautological classes pull back under reduction morphisms. These results are of independent interest and have been already used in a crucial way in 
\cite{CT_partII}. In Section 3, we discuss the GIT interpretation of the Hassett spaces $Z_n$ in the $n$ odd case and prove Theorem \ref{odd}. 
In Section 4, we do the same for the $n$ even case and prove Theorem \ref{even}. Section 5 serves as an appendix, recalling results on Losev--Manin spaces from \cite{CT_partI} and calculating the push forward to $Z_n$ of the full exceptional collection on the Losev--Manin space $\LM_n$. These results are used in Sections 3 and 4 to prove fullness in Theorems \ref{odd} and \ref{even}. Throughout the paper, we do not distinguish between line bundles and the corresponding divisor classes.

\smallskip

\noindent{\bf Acknowledgements.} 
We are grateful to Alexander Kuznetsov for suggesting the problem about the derived categories of moduli spaces of pointed curves in the equivariant setting. 
We thank Daniel Halpern--Leistner for his help with windows in derived categories. We thank Valery Alexeev and the anonymous referee for useful comments.


\section{Preliminaries on Hassett spaces}\label{Hassett}

We refer to \cite{Ha} for background on the Hassett moduli spaces. 
Recall that for a choice of weights 
$${\mathbf{a}}=(a_1, \ldots, a_n),\quad a_i\in\QQ,\quad 0<a_i\leq 1,\quad\sum a_i>2,$$ 
we denote by  $\M_{\mathbf{a}}$ the fine moduli space of weighted rational curves with $n$ markings which are stable with respect to the set of weights $\ba$.  
Moreover, $\M_\ba$ is a smooth projective variety of dimension $(n-3)$. 
Note that the polytope of weights has a chamber structure with walls 
$\sum_{i\in I} a_i=1$ for every subset $I\subseteq \{1,\ldots,n\}$. 
One obtains the Losev--Manin space $\LM_N$ by considering weights on the set of markings $\{0,\infty\}\cup N$: 
 $$\big(1,1,\frac{1}{n},\ldots,\frac{1}{n}\big),\quad n=|N|.$$ 
 Replacing the weights equal to $\frac{1}{n}$ with some $\epsilon\in\QQ$, for some $0<\epsilon\ll 1$,  defines the same 
 moduli problem, hence, gives isomorphic moduli spaces. 
 
Similarly, the moduli space $Z_N$ of Notation \ref{Z} is the moduli space with set of markings $\{0,\infty\}\cup N$ and weights 
$$\big(\frac{1}{2}+\eta,\frac{1}{2}+\eta,\frac{1}{n},\ldots,\frac{1}{n}\big),\quad \eta\in\QQ,\quad 0<\eta\ll 1.$$ 

If $\ba=(a_1, \ldots, a_n)$ and $\ba'=(b_1, \ldots, b_n)$ are such that $a_i\geq b_i$, for all $i$, there is a reduction 
morphism $\rho: \M_\ba\ra\M_\ba'$. This is a birational morphism whose exceptional locus consists of boundary divisors $\de_{I}$ (parametrizing reducible curves with a node that disconnects the markings from $I$ and $I^c$) for every subset $I\subseteq N$ such that $\sum_{i\in I} a_i>1$,  but 
$\sum_{i\in I} b_i\leq1$. For us a special role will be played by the reduction map
$p:\LM_N\ra Z_N$ which reduces the weights of $\{0,\infty\}$ from $1$ to the minimum possible. 

\smallskip

For a Hassett space $\M=\M_\ba$, with universal family $(\pi:\cU\ra\M, \{\si_i\})$, recall that 
we define $\psi_i:=\si_i^*\om_{\pi}$, $\de_{ij}=\si_i^*\si_j$. 
Since the sections $\sigma_i$ lie in the locus where 
the map $\pi$ is smooth, the identity $\sigma_i\cdot\omega_{\pi}=-\sigma_i^2$  holds on $\cU$. Therefore, 
$-\psi_i=\pi_*\big(\sigma_i^2\big)=\sigma_i^*\sigma_i$. 

\begin{lemma}\label{relations}
Assume $\M$ is a Hassett space whose universal family $\pi:\cU\ra\M$ is a $\PP^1$-bundle. Then the identity 
$-\om_{\pi}=2\si_i+\pi^*(\psi_i)$ holds on $\cU$, and therefore, on $\M$ we have for all $i\neq j$:
$$\psi_i+\psi_j=-2\de_{ij}.$$
Hence, for all distinct $i,j,k$, we have $\psi_i=-\de_{ij}-\de_{ik}+\de_{jk}$. 
\end{lemma}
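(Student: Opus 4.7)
The plan is to first prove the divisor identity on the total space $\cU$ and then pull it back along the sections $\sigma_j$.

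First I would identify $-\omega_\pi - 2\sigma_i$ as coming from the base. Because $\pi$ is a $\PP^1$-bundle, $\omega_\pi$ restricts to $\cO_{\PP^1}(-2)$ on each fiber, while $\sigma_i$ restricts to a point, i.e.\ to $\cO_{\PP^1}(1)$. Hence $-\omega_\pi - 2\sigma_i$ has degree $0$ on every fiber, so $R^0\pi_*$ of it is a line bundle $L$ on $\M$ with $\pi^*L=-\omega_\pi-2\sigma_i$ (more concretely, the projection formula gives $-\omega_\pi-2\sigma_i\simeq\pi^*L$). To identify $L$, pull back along the section $\sigma_i$:
\[
L=\sigma_i^*\pi^*L=\sigma_i^*(-\omega_\pi)-2\sigma_i^*\sigma_i=-\psi_i-2(-\psi_i)=\psi_i,
\]
where I used $\sigma_i^*\omega_\pi=\psi_i$ by definition and the identity $\sigma_i^*\sigma_i=-\psi_i$ recalled just above the lemma. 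This establishes $-\omega_\pi=2\sigma_i+\pi^*\psi_i$ on $\cU$.

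Next, for $j\neq i$, I would pull this identity back along $\sigma_j:\M\to\cU$. The left-hand side gives $\sigma_j^*(-\omega_\pi)=-\psi_j$; the right-hand side gives $2\sigma_j^*\sigma_i+\psi_i=2\de_{ij}+\psi_i$ (using that intersection of transverse sections is symmetric, so $\sigma_j^*\sigma_i=\sigma_i^*\sigma_j=\de_{ij}$). Rearranging yields
\[
\psi_i+\psi_j=-2\de_{ij}.
\]
Finally, for three distinct indices $i,j,k$, I would add the two relations $\psi_i+\psi_j=-2\de_{ij}$, $\psi_i+\psi_k=-2\de_{ik}$ and subtract $\psi_j+\psi_k=-2\de_{jk}$, obtaining $2\psi_i=-2\de_{ij}-2\de_{ik}+2\de_{jk}$, which gives the last claim $\psi_i=-\de_{ij}-\de_{ik}+\de_{jk}$.

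The only mildly subtle point is the descent step $-\omega_\pi-2\sigma_i=\pi^*L$: one should justify that fiberwise-trivial degree $0$ behavior on a smooth $\PP^1$-bundle implies being a pullback, which is standard (e.g.\ $\pi_*$ commutes with base change and $R^1\pi_*$ vanishes for line bundles of fiberwise degree $\geq -1$, or equivalently $\pi^*\pi_*\to\mathrm{id}$ is an isomorphism on such bundles). Once that descent is in place, everything else is a formal pullback-and-linear-algebra manipulation.
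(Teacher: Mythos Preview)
Your proof is correct and follows essentially the same approach as the paper: both argue that $-\omega_\pi-2\sigma_i$ is fiberwise trivial on the $\PP^1$-bundle, hence a pullback $\pi^*L$, and then identify $L=\psi_i$ by restricting along $\sigma_i$. The paper's proof is simply a terser version of yours, omitting the explicit pullback along $\sigma_j$ and the final linear algebra, which it leaves implicit.
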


\begin{proof}
Indeed, $-\om_{\pi}-2\si_i$ restricts to the fibers of the $\bP^1$-bundle trivially, and
therefore is of the form $\pi^*(L)$ for some line bundle on $\M$. Pulling back
by $\si_i$ shows that $L=\psi_i$.
\end{proof}

When $n$ is odd, the universal family $\cU\ra Z_N$ is a $\PP^1$-bundle and the sections $\si_0$ and $\si_{\infty}$ are distinct. 
Lemma \ref{relations} has the following:
\begin{cor}
The following identities hold on $Z_N$ when $n$ is odd:
\begin{equation}\label{P1-bundle identities}
\psi_0=-\psi_{\infty}=-\de_{i0}+\de_{i\infty},\quad \psi_i=-\de_{i0}-\de_{i\infty}.
\end{equation}
\end{cor}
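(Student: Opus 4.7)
The plan is to derive all three identities as immediate consequences of Lemma \ref{relations}, once one observes the key vanishing $\de_{0\infty}=0$ on $Z_N$.

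First I would establish this vanishing. The weights assigned to the markings $0$ and $\infty$ in $Z_N$ are both $\tfrac{1}{2}+\eta$, and their sum $1+2\eta$ exceeds $1$. The stability condition for weighted pointed rational curves therefore forbids the sections $\sigma_0$ and $\sigma_\infty$ from ever meeting, so they are disjoint in $\cU$. Consequently $\de_{0\infty}=\sigma_0^*\sigma_\infty=0$ on $Z_N$.

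Next, since $n$ is odd, the universal family $\pi:\cU\to Z_N$ is a $\PP^1$-bundle (as noted just before the corollary), so Lemma \ref{relations} applies. Applying the last identity of the lemma to the triple $(0,i,\infty)$ with $i\in N$ gives
\[
\psi_0=-\de_{0i}-\de_{0\infty}+\de_{i\infty}=-\de_{i0}+\de_{i\infty},
\]
using $\de_{0\infty}=0$. Swapping the roles of $0$ and $\infty$ yields $\psi_\infty=-\de_{i\infty}+\de_{i0}$, and adding these two expressions gives $\psi_0+\psi_\infty=0$, i.e.\ $\psi_0=-\psi_\infty$. Finally, applying the same identity to the triple $(i,0,\infty)$ produces
\[
\psi_i=-\de_{i0}-\de_{i\infty}+\de_{0\infty}=-\de_{i0}-\de_{i\infty},
\]
again thanks to $\de_{0\infty}=0$.

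There is essentially no obstacle here: the entire argument rests on the disjointness of $\sigma_0$ and $\sigma_\infty$, which is forced by the weight constraint on the two heavy markings, and on the fact that the $\PP^1$-bundle structure of the universal family puts us squarely within the hypothesis of Lemma \ref{relations}.
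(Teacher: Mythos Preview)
Your proof is correct and follows exactly the approach the paper indicates: the paper simply remarks that when $n$ is odd the universal family is a $\PP^1$-bundle with $\sigma_0$ and $\sigma_\infty$ disjoint, and then invokes Lemma~\ref{relations}, which is precisely what you have spelled out in detail. Your justification of $\de_{0\infty}=0$ via the weight condition is the correct reason the two heavy sections are disjoint.
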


\begin{lemma}\label{general statement}
Let $\M=\M_\ba$, $\M'=\M_{\ba'}$ be Hassett spaces, with $\ba=(a_i)$, $\ba'=(b_i)$, $a_i\geq b_i$ for all $i$. Consider the corresponding reduction map 
$p:\M'\ra\M$. 
Let $(\pi:\cU\ra\M, \{\sigma_i\})$, $(\pi':\cU'\ra\M',\{\sigma'_i\})$ be the universal families.  
Denote by
$(\rho: \cV\ra \M', \{s_i\})$ the pull-back of  $(\pi':\cU\ra\M,\{\sigma_i\})$ to $\M'$. 
Then there exists a commutative diagram: 
\begin{equation*}
\begin{CD}
\cU'      @>v>>  \cV@>q>> \cU\\
@VV{\pi'}V        @V{\rho}VV @V{\pi}VV \\
\M'   @>Id>>  \M'  @>{p}>>  \M
\end{CD}
\end{equation*}

Furthermore, identifying $\cU'$ with a Hassett space $\M_{\tilde{\ba}}$, where $\tilde{\ba}=(a_1,\ldots, a_n, 0)$ 
(with an additional marking $x$ with weight $0$) \cite[2.1.1]{Ha}, we have:
$$v^*\omega_{\rho}=\omega_{\pi'}-\sum_{|I|\geq2, \sum_{i\in I}a_i>1, \sum_{i\in I}b_i\leq1} \de_{I\cup\{x\}},$$
$$v^*s_i=\sigma_i+\sum_{i\in I, |I|\geq2,  \sum_{i\in I}a_i>1, \sum_{i\in I}b_i\leq1} \de_{I\cup\{x\}},$$
$$p^*\psi_i=\psi_i-\sum_{i\in I, |I|\geq2, \sum_{i\in I}a_i>1, \sum_{i\in I}b_i\leq1}\de_I,$$ 
$$p^*\de_{ij}=\de_{ij}+\sum_{i,j\in I, |I|\geq 3,\sum_{i\in I}a_i>1, \sum_{i\in I}b_i\leq1}\de_I.$$
\end{lemma}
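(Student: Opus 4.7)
The plan is to construct the diagram via the universal property of fiber products, identify $v$ as a reduction morphism between extended Hassett spaces (through the identification $\cU' \cong \M_{\tilde\ba}$ from \cite[2.1.1]{Ha}), and then derive the four stated formulas by restricting along sections.

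First, I would set $\cV := \M' \times_\M \cU$ with $q, \rho$ the two projections and $s_i := q^{-1}(\sigma_i)$ the pulled-back sections, so that the right square is a fiber product by construction. To obtain $v : \cU' \to \cV$, I use the universal property of $\cV$ applied to $\pi' : \cU' \to \M'$ together with the canonical reduction morphism $\cU' \to \cU$ covering $p$; commutativity of the whole diagram is then automatic.

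Next, under the identification $\cU' \cong \M_{\tilde\ba}$, the morphism $v$ becomes a reduction morphism between Hassett spaces of $(n+1)$-marked curves, and its exceptional divisors are precisely the boundary components $\de_{I \cup \{x\}} \subset \cU'$ with $|I|\geq 2$, $\sum_{i\in I} a_i > 1$, $\sum_{i\in I} b_i \leq 1$. The two identities for $v^*\omega_\rho$ and $v^* s_i$ then follow from standard calculations for reductions of families of nodal curves: the relative dualizing sheaf acquires a $-\de_{I\cup\{x\}}$ correction for each contracted bubble, while $v^* s_i$ decomposes as $\sigma_i$ plus the exceptional divisors $\de_{I\cup\{x\}}$ with $i \in I$ (since precisely then the $i$-th marking lies on the bubbling component).

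Finally, I derive the tautological pullback formulas by restricting along sections. Since $s_i = v\circ\sigma'_i$ as sections of $\rho$, and since $q^*\omega_\pi = \omega_\rho$ by flat base change, I compute
$$p^* \psi_i = (\sigma'_i)^* v^* q^* \omega_\pi = (\sigma'_i)^* v^* \omega_\rho = \psi_i - \sum_{I} (\sigma'_i)^* \de_{I\cup\{x\}}.$$
The intersection $(\sigma'_i)^* \de_{I\cup\{x\}}$ equals $\de_I$ on $\M'$ precisely when $i \in I$ and vanishes otherwise, recovering the first formula. The class $p^*\de_{ij} = (\sigma'_i)^* v^* s_j$ is computed analogously; the restriction $|I| \geq 3$ arises because for $|I|=2$, namely $I = \{i,j\}$, the exceptional contribution coincides with the base term $\de_{ij}$ on $\M'$ and is absorbed into it rather than listed separately in the sum. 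The main technical obstacle will be verifying rigorously that $v$ has the described exceptional locus with multiplicity one in both universal-family formulas -- ruling out higher multiplicities or contributions from iterated boundary strata -- which reduces to a local analysis near a general point of each $\de_{I\cup\{x\}}$.
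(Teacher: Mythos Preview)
Your overall strategy---construct the diagram, identify the $v$-exceptional divisors as the $\de_{I\cup\{x\}}$, then pull back along sections---is exactly the paper's approach. The derivation of $p^*\psi_i$ and $p^*\de_{ij}$ by writing $p^*\psi_i = (\sigma'_i)^* v^*\omega_\rho$ and $p^*\de_{ij} = (\sigma'_i)^* v^* s_j$, together with $(\sigma'_i)^*\de_{I\cup\{x\}} = \de_I$ iff $i\in I$, matches the paper verbatim.

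There is one gap. You assert that under the identification $\cU' \cong \M_{\tilde\ba}$, the map $v$ ``becomes a reduction morphism between Hassett spaces of $(n+1)$-marked curves''. But $\cV = \M'\times_\M \cU$ is not in general a Hassett space; the paper in fact only claims $\cV$ is \emph{generically} smooth along the codimension-2 images $v(\de_{I\cup\{x\}})$. What is true is that the composite $q\circ v:\cU'\to\cU$ is a reduction morphism; among its exceptional divisors, the $\de_I$ (with $x\notin I$) are not $v$-exceptional (they dominate divisors in $\cV$ via $\rho$), while the $\de_{I\cup\{x\}}$ are. The paper constructs $v$ instead via relative MMP for $\omega_{\pi'}(\sum b_i\sigma'_i)$ (semi-stable reduction), realizing it as blow-downs followed by a small crepant map. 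The multiplicity-one statement you flag as the ``main technical obstacle'' is then handled cleanly by the canonical-divisor comparison: since $v$ contracts divisors to codimension-2 loci where $\cV$ is generically smooth, the blow-up formula gives $K_{\cU'} = v^*K_{\cV} + \sum E$, and subtracting $\pi'^*K_{\M'} = v^*\rho^*K_{\M'}$ yields $v^*\omega_\rho = \omega_{\pi'} - \sum E$. For $v^*s_i$, the paper observes that $\sigma'_i$ is the proper transform of $s_i$, that $s_i$ is smooth (being a section over smooth $\M'$), and that $v(\de_{I\cup\{x\}})\subset s_i$ (with codimension one) iff $i\in I$; this is the rigorous content behind your ``standard calculations''.

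Your explanation of the $|I|\ge 3$ constraint in the last formula is not quite right: the computation naturally produces $|I|\ge 2$, and the $I=\{i,j\}$ term either fails the weight condition $\sum_{k\in I} a_k>1$ (when $\de_{ij}\ne 0$ on $\M'$) or contributes the boundary divisor $\de_{\{i,j\}}$ (when $\de_{ij}=0$ on $\M'$)---it is not ``absorbed'' in the sense of coinciding with a nonzero $\de_{ij}$.
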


\bp
The spaces $\cU$ and $\cU'$ are smooth \cite[Propositions 5.3 and 5.4]{Ha}.
The existence of the commutative diagram follows from semi-stable reduction \cite[Proof of Theorem 4.1]{Ha}. 
The map $v$ is obtained by applying the relative MMP for the line bundle $\omega_{\pi'}(\sum b_i\sigma'_i)$. Concretely, the relative MMP
results in a sequences of blow-downs, followed by a small crepant map: 
$$\cU'=\cS^1\ra\cS^2\ra\ldots\ra\cS^r=\cV,$$
(all over $\M'$). 
The resulting map $v:\cU'\ra\cV$ is a birational map which contracts divisors in $\cU'$ to 
codimension $2$ loci in $\cV$ 
(as the relative dimension drops from $1$ to $0$). Note that $\cV$  is generically smooth along these loci.
The $v$-exceptional divisors can be identified via $\cU'\cong\M_{\tilde{\ba}}$ with 
boundary divisors $\de_{I\cup\{x\}}$ ($I\subseteq N$), with the property that
$\sum_{i\in I} a_i>1$, $\sum_{i\in I} b_i\leq1$.  

For a flat family of nodal curves $u: \cC\ra B$ with Gorenstein base $B$ (in our case smooth)
the relative dualizing sheaf $\omega_u$ is a line bundle on $\cC$ with first Chern class 
$K_{\cC}-u^*K_B$, where $K_{\cC}$ and $K_B$ denote the corresponding canonical divisors. 
In particular:
$$\omega_{\pi'}=K_{\cU'}-{\pi'}^*K_{\M'}, \quad \omega_{\rho}=K_{\cV}-\rho^*K_{\M'}.$$
Since the map $v$ on an open set is the blow-up of codimension $2$ loci in $\cV$, it follows that 
$K_{\cU'}=v^*K_{\cV}+\sum E,$ by the blow-up formula. Hence,
$v^*\omega_{\rho}=\omega_{\pi'}-\sum E$, 
where the sum runs over all prime divisors $E$ which are $v$-exceptional.
This proves the first identity. For the second, we identify the sections $\sigma'_i$ (resp.,
$\sigma_i$) with the boundary divisors $\de_{ix}$ in $\cU'$ (resp., in $\cU$). Note that the proper transform of the section $s_i$ is $\sigma'_i$ and $s_i$ contains $v(\de_{I\cup\{x\}})$ ($|I|\geq2$), for $\de_{I\cup\{x\}}$ $v$-exceptional if and only if 
$i\in I$. Moreover, in this case, $v(\de_{I\cup\{x\}})$ is contained in $s_i$ (with codimension $1$) and $s_i$ is smooth (since $\M'$ is). The second identity follows. By Definition \ref{tautological} and the diagram, 
$$p^*\psi_i=p^*\sigma_i^*\omega_{\pi}=s_i^*q^*\omega_{\pi}=s_i^*\omega_{\rho}=
{\sigma'_i}^*v^*\omega_{\rho},$$
$$p^*\de_{ij}=p^*\sigma_i^*(\sigma_j)=s_i^*q^*(\sigma_j)=s_i^*s_j=
{\sigma'}_i^*v^*s_j.$$
The last two formulas now follow using the first two and the fact that ${\sigma'}_i^*\de_{I\cup\{x\}}=\de_I$ if $i\in I$ and is $0$ otherwise. 
\ep

\begin{cor}\label{pull by p}
Let $p:\LM_N\ra Z_N$ be the reduction map. Let $s:=\Bigl\lfloor\frac{n-1}{2}\Bigr\rfloor$. 
Then
$$p^*\psi_0=\psi_0-\sum_{I\subseteq N, 1\leq |I|\leq s}\de_{I\cup\{0\}},$$
$$p^*\psi_i=-\sum_{i\in I\subseteq N, 1\leq |I|\leq s}\big(\de_{I\cup\{0\}}+\de_{I\cup\{\infty\}}\big) \quad (i\in N),$$
$$p^*\de_{i0}=\sum_{i\in I\subseteq N, 1\leq |I|\leq s}\de_{I\cup\{0\}}\quad (i\in N),$$ 
$$p^*\de_{ij}=\de_{ij}+\sum_{i,j\in I\subseteq N, 1\leq |I|\leq s}\big(\de_{I\cup\{0\}}+\de_{I\cup\{\infty\}}\big)  \quad (i,j\in N).$$
 \end{cor}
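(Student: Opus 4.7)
The proof is a direct application of Lemma~\ref{general statement} to the reduction morphism $p \colon \LM_N \to Z_N$, taking $\ba = (1, 1, \frac{1}{n}, \ldots, \frac{1}{n})$ to be the weights on $\LM_N$ and $\ba' = (\frac{1}{2}+\eta, \frac{1}{2}+\eta, \frac{1}{n}, \ldots, \frac{1}{n})$ to be the weights on $Z_N$. First I would enumerate the $p$-exceptional boundary divisors $\de_J$ on $\LM_N$: the Lemma's condition $\sum_J a_i > 1$ and $\sum_J b_i \leq 1$ rules out $J \cap \{0,\infty\} = \emptyset$ (since then $\sum_J a_i = |J|/n \leq 1$) and $\{0,\infty\} \subseteq J$ (since then $\sum_J b_i \geq 1+2\eta > 1$), so $J$ contains exactly one of $\{0,\infty\}$. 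Writing $J = \{0\}\cup I$ or $J = \{\infty\}\cup I$ with $I \subseteq N$, the remaining conditions become $1 \leq |I|$ and $\frac{1}{2}+\eta + \frac{|I|}{n} \leq 1$, i.e.\ $1 \leq |I| \leq s$ for $s = \lfloor(n-1)/2\rfloor$ and $0 < \eta \ll 1$.

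Substituting this inventory into the four formulas of Lemma~\ref{general statement} yields the Corollary's formulas for $p^*\psi_0$ and for $p^*\de_{ij}$ with $i,j \in N$ on the nose; in the latter, the condition $i \neq j$ forces $|I| \geq 2$ automatically in the resulting sum. For the two remaining formulas the Lemma produces $p^*\psi_i = \psi_i - \sum_{i\in I,\,1\leq|I|\leq s}(\de_{I\cup\{0\}}+\de_{I\cup\{\infty\}})$ and $p^*\de_{i0} = \de_{i0} + \sum_{i\in I,\,1\leq|I|\leq s}\de_{I\cup\{0\}}$, where the leading $\psi_i$ and $\de_{i0}$ are intrinsic classes on $\LM_N$; to match the Corollary I would verify that both vanish on $\LM_N$ when $i \in N$. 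The vanishing $\de_{i0} = 0$ is immediate: because $a_0 + a_i = 1 + \frac{1}{n} > 1$, the sections $\si_0$ and $\si_i$ cannot meet anywhere in the universal family $\cU_{\LM_N}$, so $\si_i^*\si_0 = 0$. The vanishing $\psi_i = 0$ is the standard property of Losev--Manin spaces that the cotangent line bundle at a light marking is trivial, which can be verified either directly from the iterated-blowup toric presentation of $\LM_N$ recalled in Section~\ref{Hassett} (and in more detail in \cite{CT_partI}), or by a comparison on the forgetful morphism $\LM_N \to \LM_{N\setminus\{j\}}$ for any $j \neq i$.

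The main obstacle is the identification $\psi_i = 0$ on $\LM_N$ for light markings; once this is in hand, together with the essentially automatic $\de_{i0} = 0$, the four formulas of Lemma~\ref{general statement} collapse exactly to the four displayed formulas of the Corollary, and the remainder is routine bookkeeping.
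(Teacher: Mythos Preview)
Your proposal is correct and follows essentially the same route as the paper: apply Lemma~\ref{general statement} to $p\colon\LM_N\to Z_N$, enumerate the exceptional boundary divisors $\de_{I\cup\{0\}},\de_{I\cup\{\infty\}}$ with $1\le|I|\le s$, and then kill the leading $\psi_i$ and $\de_{i0}$ terms on $\LM_N$. The only difference is in how $\psi_i=0$ on $\LM_N$ is established: the paper isolates this as Lemma~\ref{psi_i is zero on LM} and proves it by applying Lemma~\ref{general statement} once more to the reduction $\ocM_{0,N\cup\{0,\infty\}}\to\LM_N$ together with the relation $\psi_i=\sum_{i\in I,\,|I|\ge 2,\,0,\infty\in I^c}\de_I$ from \cite{KeelTevelev}, whereas you invoke it as a standard fact; your explicit observation that $\de_{i0}=0$ on $\LM_N$ (because the sections cannot meet) is left implicit in the paper.
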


\begin{lemma}\label{psi_i is zero on LM}
On the Losev--Manin space $\LM_N$, we have $\psi_i=0$ for all $i\in N$. 
\end{lemma}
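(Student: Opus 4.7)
The plan is to pin down $\omega_\pi$ globally on the universal family $\pi:\cU\to\LM_N$ in terms of only the two heavy sections $\sigma_0,\sigma_\infty$, and then exploit the fact that any light section must avoid the heavy ones.

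First I would show that the line bundle $M:=\omega_\pi(\sigma_0+\sigma_\infty)$ on $\cU$ has degree $0$ on every irreducible component of every fiber of $\pi$. A fiber of $\pi$ over a point of $\LM_N$ is a chain of $\PP^1$'s with $0$ on the first component $C_1$ and $\infty$ on the last component $C_k$; every component $C_j$ carries exactly two ``special points'' counting nodes and heavy markings (one heavy marking plus one node if $j\in\{1,k\}$, two nodes otherwise), so $M|_{C_j}=\omega_{\PP^1}(2\cdot\mathrm{pt})=\cO$. Since each fiber of $\pi$ is a connected genus $0$ nodal curve, $M$ is trivial on every fiber, so by cohomology and base change $M=\pi^*L$ for a line bundle $L$ on $\LM_N$.

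Next I would identify $L$ by pulling back through $\sigma_0$. Using $\sigma_0^*\omega_\pi=\psi_0$ and $\sigma_0^*\sigma_0=-\psi_0$ (recalled just before Lemma \ref{relations}), together with $\sigma_0^*\sigma_\infty=\de_{0\infty}=0$ (the heavy sections are disjoint on $\LM_N$ because $1+1>1$), one gets
\[
L=\sigma_0^*M=\psi_0-\psi_0+0=0.
\]
Hence $\omega_\pi=-\sigma_0-\sigma_\infty$ in $\Pic(\cU)$.

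Finally, for $i\in N$ I apply $\sigma_i^*$ to this relation. Since the light marking $i$ has weight $\epsilon$ and $\epsilon+1>1$, the section $\sigma_i$ is disjoint from $\sigma_0$ and from $\sigma_\infty$ in $\cU$, i.e.\ $\de_{i0}=\de_{i\infty}=0$. Therefore
\[
\psi_i=\sigma_i^*\omega_\pi=-\sigma_i^*\sigma_0-\sigma_i^*\sigma_\infty=0,
\]
as required. The only step that needs genuine care is Step 1--2, namely verifying triviality of $M$ on each (possibly reducible) fiber and promoting this to $M=\pi^*L$; once that is done the rest is a one-line adjunction computation.
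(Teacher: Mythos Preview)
Your argument is correct and is genuinely different from the paper's. The paper pulls back via the reduction map $p:\ocM_{0,N\cup\{0,\infty\}}\to\LM_N$, applies Lemma~\ref{general statement} to obtain $p^*\psi_i=\psi_i-\sum_{i\in I,\,|I|\ge2,\,0,\infty\notin I}\de_I$ on $\ocM_{0,N\cup\{0,\infty\}}$, then invokes an external relation (\cite[Lemma~3.4]{KeelTevelev}) to conclude that this expression vanishes, and finally pushes forward using $Rp_*\cO=\cO$.

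Your route is more self-contained: you work directly on the universal family over $\LM_N$ and identify $\omega_\pi$ globally as $-\sigma_0-\sigma_\infty$, which is the natural extension of Lemma~\ref{relations} from the $\PP^1$-bundle situation to a family whose fibers are chains. This avoids any appeal to relations on $\ocM_{0,n}$ and instead uses only the combinatorics of chains (each component has exactly two special points), standard cohomology-and-base-change for degree-zero line bundles on trees of $\PP^1$'s, and the observation that on $\LM_N$ no light marking can collide with $0$ or $\infty$. The paper's approach has the advantage of reusing the general pull-back machinery of Lemma~\ref{general statement} that is already needed elsewhere; yours has the advantage of being entirely internal and conceptually parallel to Lemma~\ref{relations}.
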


\bp
Apply Lemma  \ref{general statement} to a reduction map $p: \M_{0,N\cup\{0,\infty\}}\ra\LM_N$:
$$p^*\psi_i=\psi_i-\sum_{i\in I, |I|\geq2, 0,\infty\in I^c}\de_I.$$
The right hand side of the equality is $0$ \cite[Lemma 3.4]{KeelTevelev}. Therefore, $p^*\psi_i=0$. As  $p_*\cO=\cO$, by the projection formula, we have 
$\psi_i=0$.
\ep 

\bp[Proof of Corollary \ref{pull by p}]
Follows from Lemma \ref{general statement} and Lemma \ref{psi_i is zero on LM}. In the notation of Lemma \ref{general statement}, the universal family 
$\cU'$ over $\M'=\LM_N$ can be identified with $\M_{\tilde{\ba}}$, where 
$\tilde{\ba}=(1,1,\epsilon,\ldots,\epsilon,0)$, with an additional marking $x$ with $0$ weight.
But $\cU'$ can also be identified with $\LM_{N\cup\{x\}}=\M_{(1,1,\epsilon,\ldots\epsilon)}$
(here $x$ has weight $\epsilon$). Via this identification, boundary divisors $\de_J$ correspond to boundary divisors $\de_J$, 
for any $J\subseteq N\cup\{0,\infty,x\}$. 
The $v$-exceptional divisors appearing in the sum are $\de_{I\cup\{x,0\}}$, $\de_{I\cup\{x,\infty\}}$, $I\subseteq N$, $|I| \leq \Bigl\lfloor\frac{n-1}{2}\Bigr\rfloor$. 
\ep

When $n=|N|$ is even, the Hassett space $Z_N=\M_{(\frac{1}{2}+\eta, \frac{1}{2}+\eta,\frac{1}{n},\ldots,\frac{1}{n})}$ of Notation \ref{Z}
is closely related to the following Hassett spaces:
$$Z'_N=\M_{(\frac{1}{2}+\epsilon, \frac{1}{2},\frac{1}{n},\ldots,\frac{1}{n})},\quad Z''_N=\M_{(\frac{1}{2}, \frac{1}{2}+\epsilon,\frac{1}{n},\ldots,\frac{1}{n})},$$
with  weights assigned to $(\infty, 0, p_1, \ldots, p_n)$. 
There exist 
$p': Z_N\ra Z'_N$, $p': Z_N\ra Z''_N$, reduction maps 
that contract the boundary divisors using the two different projections. 
The universal families over $Z'_N$ and $Z''_N$ are $\PP^1$-bundles. Lemma  \ref{general statement} applied to the reduction maps $p'$, $p''$ leads to: 

\begin{lemma}\label{relations2}
Assume $n=|N|$ is even. The following relations hold between the tautological classes on the Hassett space $Z_N$: 
$$\psi_0=\de_{i\infty}-\de_{i0}+\sum_{i\in T, |T|=\frac{n}{2}}\de_{T\cup\{\infty\}},\quad \psi_{\infty}=\de_{i0}-\de_{i\infty}+\sum_{i\notin T, |T|=\frac{n}{2}}\de_{T\cup\{\infty\}},$$ 
$$\psi_0+\psi_{\infty}=\sum_{|T|=\frac{n}{2}}\de_{T\cup\{\infty\}}.$$ 
\end{lemma}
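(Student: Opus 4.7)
The plan is to derive the formulas for $\psi_\infty$ and $\psi_0$ by pulling them back from $Z'_N$ and $Z''_N$ via the reduction maps $p'$ and $p''$ using Lemma \ref{general statement}, and then to add them to obtain the third identity.

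First I would verify that the universal families over $Z'_N$ and $Z''_N$ really are $\PP^1$-bundles. A reducible weighted stable curve would correspond to a partition of $N \cup \{0, \infty\}$ with both sides of weight $>1$; the combinatorics of the weights $(\tfrac{1}{2}+\epsilon, \tfrac{1}{2}, \tfrac{1}{n}, \ldots, \tfrac{1}{n})$ and its reflection rules this out when $n$ is even. Moreover, on each of these spaces the weights of $0$ and $\infty$ sum to more than $1$, so the sections $\sigma_0$ and $\sigma_\infty$ are disjoint and $\de_{0\infty} = 0$. A direct weight check then identifies the contracting subsets (in the sense of Lemma \ref{general statement}) as precisely $I = T \cup \{0\}$ with $T \subseteq N$, $|T| = n/2$ for the map $p'$, and symmetrically $T \cup \{\infty\}$ for $p''$.

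Applying Lemma \ref{relations} on $Z'_N$ yields, for any $i \in N$,
$$\psi_\infty = -\de_{i\infty} - \de_{0\infty} + \de_{i0} = -\de_{i\infty} + \de_{i0}.$$
Pulling back via $p'$ using Lemma \ref{general statement}, I would get ${p'}^*\psi_\infty = \psi_\infty$ and ${p'}^*\de_{i\infty} = \de_{i\infty}$ on $Z_N$ (no contracting subset for $p'$ contains $\infty$), while
$${p'}^*\de_{i0} = \de_{i0} + \sum_{T \subseteq N,\, i \in T,\, |T| = n/2} \de_{T \cup \{0\}}.$$
The identification $\de_{T \cup \{0\}} = \de_{T^c \cup \{\infty\}}$ and re-indexing by $T' = T^c$ then recover the stated formula for $\psi_\infty$. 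The formula for $\psi_0$ follows from the symmetric computation with $p''$. Adding the two yields the third identity, since the contributions indexed by $i \in T$ (from $\psi_0$) and $i \notin T$ (from $\psi_\infty$) enumerate each subset $T \subseteq N$ with $|T| = n/2$ exactly once. I expect the main bookkeeping hurdle to be consistency about which side of each partition $\{T, T^c\}$ satisfies the contracting condition, so that each boundary divisor on $Z_N$ is counted exactly once in the pullback formulas.
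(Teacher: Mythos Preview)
Your proof is correct and follows essentially the same approach as the paper: apply Lemma~\ref{relations} on the target space with a $\PP^1$-bundle universal family, then pull back via Lemma~\ref{general statement}. The only cosmetic difference is that the paper derives one of the two $\psi$-formulas and invokes the $S_2$ symmetry for the other, whereas you compute both directly; also, the paper's displayed pullback formulas are organized so that the contracting subsets are labeled by the side containing~$\infty$ (effectively the computation via $p''$), while you label them by the side containing~$0$ and re-index at the end---both bookkeepings yield the same identity.
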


\bp
The second relation follows from the first using the $S_2$ symmetry, while the third follows by adding the first two. 
To prove the first relation, consider the reduction map $p': Z_N\ra Z'_N$. To avoid confusion, we denote by $\psi'_i$, $\de'_{ij}$ 
(resp., $\psi_i$, $\de_{ij}$) the tautological classes on $Z'_N$ (resp., on $Z_N$). The universal family $\cC'\ra Z'_N$ is a 
$\PP^1$-bundle. By Lemma \ref{relations}, we have $\psi'_{\infty}=\de'_{i0}-\de'_{i\infty}$ (since $\de'_{0\infty}=0$). The relation follows, as by 
Lemma \ref{general statement}, we have
$${p'}^*\psi'_{\infty}=\psi_{\infty}-\sum_{|T|=\frac{n}{2}}\de_{T\cup\{\infty\}},\quad {p'}^*\de'_{i\infty}=\de_{i\infty}+\sum_{i\in T, |T|=\frac{n}{2}}\de_{T\cup\{\infty\}},\quad {p'}^*\de'_{i0}=\de_{i0}.$$ 
\ep


\section{Proof of Theorem \ref{odd}}\label{odd section}

We start with a few generalities on GIT quotients $(\bP^1)^n_{ss}\sslash\bG_m$. 
For $n$ odd, we first show that the Hassett space $Z_N$ introduced in (\ref{Z}) can be identified with symmetric
GIT quotients $(\bP^1)^n_{ss}\sslash\bG_m$. We use the method of windows from \cite{DHL} to prove exceptionality of the collections in Theorem \ref{odd}.  We then prove that the collection is full, by using the full exceptional collection on the Losev--Manin spaces $\LM_N$ (see Section \ref{LM}). 

\subsection{Generalities on GIT quotients $(\bP^1)^n_{ss}\sslash\bG_m$} Assume $n$ is an arbitrary positive integer. 
Let $\GG_m=\Spec\, k[z,z^{-1}]$ act on $\AA^2$ by 
$z\cdot(x,y)=(zx,z^{-1}y)$. 
Let $P\bG_m:=\bG_m/\{\pm1\}$. Note that $P\bG_m$ acts on $\bP^1$ faithfully.
Let $0\in\bP^1$ be the point with homogeneous coordinates $[0:1]$ and let $\infty=[1:0]$.

\smallskip

We use concepts of ``linearized vector bundles'' and ``equivariant vector bundles'' interchangeably.
For (complexes of) coherent sheaves, we prefer ``equivariant''.
We endow the line bundle $\cO_{\bP^1}(-1)$ with a $\bG_m$-linearization induced by the above action of $\bG_m$
on its total space $\VV\cO_{\bP^1}(-1)\subset\PP^1\times\AA^2$. 

\smallskip

Consider the diagonal action of $\bG_m$ on $(\bP^1)^n$.
For $\bar j=(j_1,\ldots,j_n)$ in $\ZZ^n$, we denote $\cO(\bar j)$ the  line bundle $\cO(j_1,\ldots,j_n)$ on $(\bP^1)^n$
with $\bG_m$-linearization given by the tensor product of linearizations above.
We denote $\cO\otimes z^k$ the trivial line bundle 
with $\bG_m$-linearization given by the character $\GG_m\ra\GG_m$, $z\mapsto z^k$. 
For every equivariant coherent sheaf $\cF$  (resp., ~a complex of sheaves $\cF^\bullet$),
we denote by $\cF\otimes z^k$ (resp., ~$\cF^\bullet\otimes z^k$) the tensor product with $\cO\otimes z^k$.
Note that $\cO(\bar j)\otimes z^k$ is $P\bG_m$-linearized iff $j_1+\ldots+j_n+k$ is even.

There is an action of $S_2\times S_n$ on $(\bP^1)^n$  which normalizes the $\bG_m$ action.
Namely, $S_n$ permutes the factors of $(\bP^1)^n$ and $S_2$ acts on $\bP^1$ by $z\mapsto z^{-1}$.
This action permutes  linearized line bundles  $\cO(\bar j)\otimes z^k$ as follows: $S_n$ permutes components of  $\bar j$
and $S_2$ flips $k\mapsto-k$.

\begin{notn}
Consider the GIT quotient
$$\Si_n:=(\bP^1)^n_{ss}\sslash_{\cL}\bG_m,\quad \cL=\cO(1,\ldots,1),$$
with respect to the ample line bundle $\cL$ (with its canonical $\bG_m$-linearization described above). 
Here $(\bP^1)^n_{ss}$ denotes the semi-stable locus with respect to this linearization. Let
$\phi: (\PP^1)_{ss}^n\ra \Si_n$ denote the canonical morphism. 
\end{notn}
As GIT quotients $X\sslash_{\cL}~G$ are by definition $\Proj \big(R(X, \cL)^G\big)$, where $R(X, \cL)^G$ is the invariant part of the section ring $R(X, \cL)$, 
we may replace $\cL$ with any positive multiple. As the action of $P\GG_m$ on $(\PP^1)^n$ is induced from  the action of $\GG_m$, 
$\Si_n$ is isomorphic to the GIT quotient $(\bP^1)^n_{ss}\sslash P\GG_m$ (with respect to any even multiple of $\cL$). 
The action of $S_2\times S_n$ on $(\bP^1)^n$ descends to $\Si_n$. 

By the Hilbert-Mumford criterion, a point $(z_i)$ in $(\PP^1)^n$ is semi-stable (resp., stable) if $\leq\frac{n}{2}$ (resp., $<\frac{n}{2}$) 
of the $z_i$ equal $0$ or equal $\infty$. 

\subsection{The space $Z_N$ as a GIT quotient when $n$ is odd} 
When $n$ is odd, there are no strictly semistable points and the action of $P\bG_m$ on $(\bP^1)^n_{ss}$ is free. 
In particular, $\Si_n$ is smooth and by Kempf's descent lemma, any $P\bG_m$-linearized line bundle on $(\bP^1)^n_{ss}$ 
descends to a line bundle on $\Si_n$. Furthermore, $\Si_n$ can be identified with the quotient stack $[(\bP^1)^n_{ss}/P\bG_m]$ 
and its derived category $D^b(\Si_n)$ with the equivariant derived category $D^b_{P\bG_m}((\bP^1)^n_{ss})$.

Consider the trivial $\PP^1$-bundle on $ (\PP^1)^n$ with the following sections:
$$\rho: (\PP^1)^n\times \PP^1=\Proj(\Sym(\cO\oplus\cO))\ra (\PP^1)^n,$$
$$s_0(\ov z)= (\ov z, 0), \quad s_{\infty}(\ov z)= (\ov z, \infty), \quad s_i(\ov z)= (\ov z, pr_i(\ov z)),$$ 
where $pr_i:(\PP^1)^n\ra\PP^1$ is the $i$-th projection. The sections $s_0$, resp., $s_{\infty}$ are induced by the projection 
$p_2: \cO\oplus\cO\ra\cO$, resp., $p_1: \cO\oplus\cO\ra\cO$, while the section $s_i$ is induced by the map
$\cO\oplus\cO\ra pr_i^*\cO(1)$ given by the sections 
$x_i=pr_i^*x, y_i=pr_i^*y$ of $pr^*\cO(1)$ that define $0$ and $\infty$ on the $i$-th copy of $\PP^1$. 
\begin{notn}\label{Delta}
Let $\De_{i0}=pr_i^{-1}(\{0\})\subseteq(\PP^1)^n$ and $\De_{i\infty}=pr_i^{-1}(\{\infty\})\subseteq(\PP^1)^n$. 
\end{notn}
Note that  $\De_{i0}$ is the zero locus of the section $x_i$, or the locus in $(\PP^1)^n$ where $s_i=s_0$. 
Similarly, let $\De_{i\infty}$ the zero locus of the section $y_i$. 

We now endow all the above vector bundles with $\bG_m$-linearizations. Let 
$$\cL_0=\cO\otimes z,\quad \cL_{\infty}=\cO\otimes z^{-1}, \quad \cL_i=pr_i^*\cO(1)\otimes 1,\quad \cE=\cL_0\oplus \cL_{\infty}.$$
The maps $\cL_0\ra\cL_i$, $\cL_{\infty}\ra\cL_i$ (given by the sections $x_i$, $y_i$) are $\bG_m$-equivariant, hence, induce $\bG_m$-equivariant surjective
maps $\cE\ra\cL_i$. The projection maps $\cE\ra\cL_0$ and $\cE\ra\cL_{\infty}$ are clearly $\bG_m$-equivariant. While none of $\cE$, $\cL_0, \cL_{\infty}, \cL_i$ are 
$P\GG_m$-linearized vector bundles, tensoring with $\cO(1,\ldots, 1)$ solves this problem, and we obtain a non-trivial $\PP^1$-bundle
$\pi: \PP(\cE)\ra \Si_n$ with disjoint sections $\si_0$, $\si_{\infty}$ and additional sections $\si_1,\ldots, \si_n$. 

Denote $\de_{i0}$ the locus in $\Si_n$ where $\si_i=\si_0$. This is the zero locus of the section giving the map $\cL_{\infty}\ra\cL_i$ on $\Si_n$, i.e., the section whose pull-back to $(\PP^1)^n$ is the section $x_i$. Similarly, we let  $\de_{i\infty}$ the locus in $\Si_n$ where $\si_i=\si_{\infty}$. Hence, the sections $x_i$, $y_i$ of $pr_i^*\cO(1)\otimes 1$ defining $\De_{i0}$, $\De_{i\infty}$ descend to global sections of the corresponding line bundle on $\Si_n$ and define 
$\de_{i0}$, $\de_{i\infty}$.  

\begin{lemma}\label{dictionary}
Assume $n$ is odd. We have the following dictionary between line bundles on the GIT quotient $\Si_n$ and $P\GG_m$-linearized
line bundles on $(\bP^1)^n$: 
$$\cO(\de_{i0})=pr^*_i\cO(1)\otimes z,\quad \cO(\de_{i\infty})=pr^*_i\cO(1)\otimes z^{-1}$$
$$\psi_0=\cO\otimes z^{-2},\quad \psi_{\infty}=\cO\otimes z^{2},\quad \psi_i=pr^*_i\cO(-2)\otimes 1.$$
\end{lemma}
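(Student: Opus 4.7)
The plan is to work in the equivariant Picard group $\Pic^{P\GG_m}((\PP^1)^n_{ss}) \cong \Pic(\Si_n)$, as supplied by the quotient-stack identification $\Si_n \cong [(\PP^1)^n_{ss}/P\GG_m]$ which is valid because $n$ is odd and so the action is free. Each line bundle on $\Si_n$ appearing in the statement has a natural equivariant counterpart upstairs, and the entire lemma amounts to reading off these counterparts from the construction preceding the statement.

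For the boundary divisors, $\de_{i0} \subset \Si_n$ is by definition the descent of $\De_{i0} = \{x_i = 0\} \subset (\PP^1)^n_{ss}$. The map $x_i : \cL_0 \to \cL_i$ was rigged to be $\GG_m$-equivariant, so $x_i$ is a weight-zero global section of $\cL_i \otimes \cL_0^{-1} = pr_i^*\cO(1) \otimes z$. Since this section cuts out $\De_{i0}$, we have $\cO(\De_{i0}) = pr_i^*\cO(1) \otimes z$ as equivariant line bundles, which descends to the asserted identity $\cO(\de_{i0}) = pr_i^*\cO(1) \otimes z$. The analogous argument applied to $y_i : \cL_\infty \to \cL_i$ gives $\cO(\de_{i\infty}) = pr_i^*\cO(1) \otimes z^{-1}$.

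For the $\psi$-classes I would use the Grothendieck formula
$$\omega_\pi = \pi^*(\det\cE) \otimes \cO_{\PP(\cE)}(-2)$$
valid for any $\PP^1$-bundle $\pi : \PP(\cE) \to B$, together with the fact that the section $\sigma$ corresponding to a line-bundle quotient $\cE \twoheadrightarrow \cM$ satisfies $\sigma^* \cO_{\PP(\cE)}(1) = \cM$. Pulling back through such a section gives $\sigma^*\omega_\pi = \det\cE \otimes \cM^{-2}$. In our situation $\det\cE = \cL_0 \otimes \cL_\infty = \cO \otimes 1$ is equivariantly trivial, and the sections $\sigma_0, \sigma_\infty, \sigma_i$ correspond to the quotients onto $\cL_0, \cL_\infty, \cL_i$ respectively. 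Plugging these in yields $\psi_0 = \cL_0^{-2} = \cO \otimes z^{-2}$, $\psi_\infty = \cL_\infty^{-2} = \cO \otimes z^{2}$, and $\psi_i = \cL_i^{-2} = pr_i^*\cO(-2) \otimes 1$, exactly the dictionary stated.

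The main point of care is the projectivization convention: one must be consistent about whether $\PP(\cE)$ parameterizes one-dimensional quotients or sub-line-bundles of $\cE$, since this affects the direction of $\sigma^* \cO_{\PP(\cE)}(1) = \cM$. With the quotient (Grothendieck) convention the answers come out as stated, and this can be cross-checked against the identity $\psi_0 + \psi_\infty = -2\de_{0\infty} = 0$ from Lemma \ref{relations} (valid since $\sigma_0$ and $\sigma_\infty$ are disjoint when $n$ is odd): indeed $(\cO \otimes z^{-2}) \otimes (\cO \otimes z^{2}) = \cO \otimes 1$ is trivial, as required.
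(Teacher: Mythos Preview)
Your approach is sound, and for the $\psi$-classes it takes a genuinely different route from the paper's: you compute $\sigma_u^*\omega_\pi$ directly via $\omega_\pi=\pi^*\det\cE\otimes\cO_{\PP(\cE)}(-2)$ together with $\sigma^*\cO_{\PP(\cE)}(1)=\cM$, whereas the paper first establishes the $\de$-formulas and then derives the $\psi$-formulas algebraically from the $\PP^1$-bundle relations~(\ref{P1-bundle identities}) (invoking Lemma~\ref{identify} to transport them to $\Si_n$). Your method is slightly more intrinsic and avoids that identification; the paper's has the virtue of reusing relations already proved.

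There is, however, a sign slip in your boundary computation: since $\cL_0=\cO\otimes z$, one has $\cL_i\otimes\cL_0^{-1}=pr_i^*\cO(1)\otimes z^{-1}$, not $\otimes\, z$ as you wrote (and symmetrically for $y_i$). You nonetheless land on the correct formula because the paper's discussion preceding the lemma is itself inconsistent about which of $x_i,y_i$ furnishes the equivariant map from $\cL_0$ versus $\cL_\infty$ (and, relatedly, about which projection of $\cE$ corresponds to $\sigma_0$). A direct weight check shows that $x_i$ is the weight-zero section of $pr_i^*\cO(1)\otimes z=\cL_i\otimes\cL_\infty^{-1}$, i.e.\ it gives the equivariant map $\cL_\infty\to\cL_i$; and indeed the paper's own proof records $\cO(\de_{i0})=\cL_i\otimes\cL_\infty^\vee$. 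If you amend your line to ``$x_i:\cL_\infty\to\cL_i$ is equivariant, hence a weight-zero section of $\cL_i\otimes\cL_\infty^{-1}=pr_i^*\cO(1)\otimes z$'', the argument goes through without any cancellation of errors. The same caveat applies to your identification $\sigma_0\leftrightarrow\cL_0$ used for $\psi_0$: it is the assignment that makes the Grothendieck computation give $z^{-2}$, and your cross-check $\psi_0+\psi_\infty=0$ confirms it, but it is in tension with the paper's stated $s_0\leftrightarrow p_2$ under the written ordering $\cE=\cL_0\oplus\cL_\infty$.
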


\bp
The first two formulas follows from the previous discussion: $\cO(\de_{i0})$ corresponds  to the $P\GG_m$-linearized
line bundle $\cL_i\otimes \cL_{\infty}^\vee$. The remaining formulas follow from Lemma \ref{identify} and the identities (\ref{P1-bundle identities}).
\ep

\begin{lemma}\label{identify}
If $n=|N|$ is odd, the Hassett space $Z_N$ (see Notation \ref{Z}) 
is isomorphic to the GIT quotient $\Si_n=(\bP^1)^n_{ss}\sslash_{\cO(1,\ldots,1)}\bG_m.$ 
\end{lemma}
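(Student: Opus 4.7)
The plan is to invoke the universal property of the Hassett moduli space $Z_N$, using the $\PP^1$-bundle with sections $(\pi\colon\PP(\cE)\to\Si_n,\ \si_0,\si_\infty,\si_1,\ldots,\si_n)$ constructed in the discussion preceding the lemma. So first I would verify that this is a family of weighted pointed stable rational curves of weight $(\tfrac{1}{2}+\eta,\tfrac{1}{2}+\eta,\tfrac{1}{n},\ldots,\tfrac{1}{n})$. Since the fibers of $\pi$ are smooth $\PP^1$'s, the ampleness of $\om_\pi$ twisted by the weighted sum of sections reduces to a numerical check on each fiber. The heavy sections $\si_0$ and $\si_\infty$ are disjoint by construction. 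Over any $(\bar z)\in(\PP^1)^n_{ss}$, since $n$ is odd there are no strictly semistable points, and the Hilbert--Mumford criterion gives that at most $\tfrac{n-1}{2}$ of the coordinates $z_i$ equal $0$ (resp.\ $\infty$); hence at most $\tfrac{n-1}{2}$ light sections collide with $\si_0$ (resp.\ $\si_\infty$), which is exactly what the Hassett weights allow for $\eta$ sufficiently small. By the universal property of $Z_N$, this family induces a morphism $f\colon\Si_n\to Z_N$, which is manifestly $(S_2\times S_n)$-equivariant.

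Next I would argue that $f$ is an isomorphism. Both $\Si_n$ and $Z_N$ are smooth projective varieties of dimension $n-1$: the former because the $P\bG_m$-action on $(\PP^1)^n_{ss}$ is free when $n$ is odd, and the latter by Hassett's theorem. To see $f$ is bijective on closed points, I would construct the set-theoretic inverse. Given $(C,p_0,p_\infty,p_1,\ldots,p_n)\in Z_N$, the stability condition for small $\eta$ forces $C$ to be irreducible when $n$ is odd (a node would split the curve into two components, but no distribution of the markings permits both heavy points to live on components of the right weight sum with $n$ odd), hence $C\cong\PP^1$. The unique automorphism sending $(p_0,p_\infty)$ to $(0,\infty)$ is well defined up to the residual $\bG_m$ fixing $\{0,\infty\}$, so $[p_1,\ldots,p_n]$ gives a well-defined point of $(\PP^1)^n_{ss}/\bG_m=\Si_n$. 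Hence $f$ is a bijective morphism from a smooth (in particular normal) variety onto a smooth variety of the same dimension, and Zariski's Main Theorem concludes.

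The only step that requires any care is matching the GIT semistability inequality with the Hassett weight inequalities. This is essentially bookkeeping: the stability conditions at collisions of light markings with each other and with the heavy markings are controlled by $\eta$ and $\tfrac{1}{n}$, and the odd-$n$ case is clean because strict inequalities on both sides leave no room for boundary phenomena. I do not foresee a genuine obstruction; the content of the lemma is simply that the $\bG_m$-action identifies $(\PP^1)^n_{ss}$ with the moduli functor defining $Z_N$.
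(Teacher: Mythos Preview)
Your proposal is correct and follows essentially the same approach as the paper: construct the family of $\cA$-stable rational curves over $\Si_n$ (the paper does this via the pull-back to $(\bP^1)^n_{ss}$, which is the same data), obtain $f\colon\Si_n\to Z_N$ from the universal property, check bijectivity on closed points by observing that isomorphism classes of such curves correspond exactly to $\bG_m$-orbits, and conclude from smoothness. The paper also records a variant in which one first maps $(\bP^1)^n_{ss}\to Z_N$ and then factors through the categorical quotient, but this is just a repackaging of your argument.
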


\bp
The trivial $\PP^1$-bundle $\rho: (\PP^1)^n_{ss}\times \PP^1\ra (\PP^1)^n_{ss}$ with sections $s_0$, $s_{\infty}$, $s_i$ is the pull-back of 
the $\PP^1$-bundle $\pi: \PP(\cE)\ra \Si_n$ and sections $\si_0$, $\si_{\infty}$, $\si_i$. Since the former is a family of $\cA$-stable rational curves, where 
$\cA=(\frac{1}{2}+\eta, \frac{1}{2}+\eta,\frac{1}{n},\ldots,\frac{1}{n})$, we have an induced morphism $f: \Si_n\ra Z_N$.  Clearly, every $\cA$-stable pointed rational curve is represented in the family over $(\PP^1)^n_{ss}$ (hence, $\Si_n$). Furthermore, two elements of this family are isomorphic if and only if they belong to the same orbit under the action of $\GG_m$. It follows that $f$ is one-to-one on closed points. As both $Z_N$ and $\Si_n$ are smooth, $f$ must be an isomorphism. Alternatively, there is an induced morphism 
$F: (\PP^1)^n_{ss}\ra Z_N$ which is $\GG_m$-equivariant (with $\GG_m$ acting trivially on $Z_N$). As $\Si_n$ is a categorical quotient, it follows that $F$ factors through $\Si_n$ and as before, the resulting map $f: \Si_n\ra Z_N$ must be an isomorphism.  
\ep

\subsection{Exceptionality}
When $n$ is odd, $\Si_n$ is a smooth polarized projective  toric variety for the torus $\bG_m^{n-1}$ and  its polytope is a cross-section
of the $n$-dimensional cube (the polytope of $(\bP^1)^n$ with respect to $\cL$) by the hyperplane normal to and bisecting the big diagonal.
In particular, the topological Euler characteristic $e(\Si_n)$ is equal to the number of 
edges of the hypercube intersecting that hyperplane:
$$e(\Si_n)=n{n-1\choose {n-1\over 2}}=n{n\choose 0}+(n-2){n\choose 1}+(n-4){n\choose 2}+\ldots.$$

By Lemma \ref{dictionary}, the line bundles $\{L_{E,p}\}$ in Theorem \ref{odd} correspond to restrictions to $(\bP^1)^n_{ss}$
of $P\GG_m$ linearized line bundles on $(\bP^1)^n$ 
\begin{equation}\label{odd translate}
L_{E,p}=\cO(-E)\otimes z^p,
\end{equation}
where $\cO(-E)=\cO(\bar j)$, and $\bar j$ is a vector of $0$'s and $(-1)$'s, with $-1$'s corresponding to the indices in $E\subseteq N$. (Here we abuse notations
and we denote by $L_{E,p}$ both the line bundle on $(\bP^1)^n$  and the corresponding one on $\Si_n$.) 
The collection is $(S_2\times S_n)$-equivariant and consists of  $e(\Si_n)$ line bundles. 

\bp[Proof of Theorem \ref{odd} -- exceptionality]
Let $G:=P\bG_m$.
We use the method of windows \cite{DHL}.
We describe the Kempf--Ness stratification \cite[Section 2.1] {DHL}   
of the unstable locus $(\bP^1)^n_{us}$ with respect to $\cL$. 
The $G$-fixed points are
$$Z_I=\{(x_i)\,|\,x_i=0\ \hbox{\rm for}\ i\not\in I,\ x_i=\infty\ \hbox{\rm for}\ i\in I\}$$
for every subset $I\subseteq\{1,\ldots,n\}$.
Let $\sigma_I:\,Z_I\hookrightarrow (\bP^1)^n$ be the inclusion map.
The stratification comes from an ordering of the pairs $(\lambda, Z)$, where $\lambda: \GG_m\ra G$ is a $1$-PS  and 
$Z$ is a connected component of the $\lambda$-fixed locus (the points $Z_I$ in our case). The ordering is such that the function
$$\mu(\lambda, Z)=-\frac{\hbox{\rm weight}_\lambda\cL|_{Z}}{|\lambda|},$$ 
is decreasing. Here $|\lambda|$ is an Euclidean norm on $\Hom(\GG_m,G)\otimes_{\ZZ}\RR$. We refer to \cite[Section 2.1] {DHL} 
for the details. As $\mu(\lambda, Z)=\mu(\lambda^k, Z)$ for any integer $k>0$, it follows that, in our situation, one only has to consider 
pairs $(\lambda, Z_I)$ and $(\lambda', Z_I)$, for the two $1$-PS $\lambda(z)=z$ and  $\lambda'(z)=z^{-1}$. Recall that 
$$\hbox{\rm weight}_\lambda\cO(-1)|_{\infty}=+1,\quad \hbox{\rm weight}_\lambda\cO(-1)|_{0}=-1,$$
$$\hbox{\rm weight}_\lambda(\cO\otimes z^p)|_q=p\quad \text{ for all points }q\in\PP^1.$$
It follows that $\hbox{\rm weight}_{\lambda'}\cO(-1)|_{\infty}=-1$, $\hbox{\rm weight}_{\lambda'}\cO(-1)|_{0}=+1$ and 
$$\hbox{\rm weight}_\lambda\cL|_{Z_I}=|I^c|-|I|, \quad \hbox{\rm weight}_{\lambda'}\cL|_{Z_I}=-|I^c|+|I|.$$
The unstable locus is the union of the following Kempf--Ness strata: 
$$S_I=\{(x_i)\,|\,x_i=\infty\ \hbox{\rm  if}\ i\in I, x_i\neq\infty\ \hbox{\rm  if}\ i\notin I\}\cong \AA^{|I^c|}\quad \text{for}\quad  |I|>n/2,$$
$$S'_I=\{(x_i)\,|\,x_i=0\ \hbox{\rm if}\ i\not\in I,  x_i\neq0\ \hbox{\rm  if}\ i\in I\}\simeq \AA^{|I|}\quad \text{for}\quad  |I|<n/2.$$
The destabilizing $1$-PS for $S_I$ (resp.~for $S'_I$) is $\lambda$ (resp.~$\lambda'$).
The  $1$-PS $\lambda$ (resp., $\lambda'$) acts on the conormal bundle $N^\vee_{S_I|(\PP^1)^n}$ (resp., $N^\vee_{S'_I|(\PP^1)^n}$) restricted to $Z_I$ with positive weights 
and their sum $\eta_I$ (resp., $\eta'_I$) can be computed as
$$\eta_I=2|I|,\quad\hbox{\rm resp.}\quad \eta'_I=2|I^c|.$$
To see this, note that the sum of $\lambda$-weights of $\big(N^\vee_{S_I|(\PP^1)^n}\big)_{|Z_I}$  equals 
$$\hbox{\rm weight}_\lambda \big(\det N^\vee_{S_I|(\PP^1)^n}\big)_{|Z_I}=
\hbox{\rm weight}_\lambda \big(\det T_{S_I}\big)_{|Z_I}-\hbox{\rm weight}_\lambda \big(\det T_{(\PP^1)^n}\big)_{|Z_I}.$$
Note that $S_I$ can be identified with $\AA^{|I^c|}$ and the point $Z_I\in S_I$ with the point $0\in\AA^{|I^c|}$. The action of $G$ on 
$\AA^{|I^c|}$ is via $z\cdot (x_j)=(z^2x_j)$. It follows that $\hbox{\rm weight}_\lambda {T_{S_I}}_{|Z_I}=2|I^c|$. Similarly, the tangent space
$\big(\det T_{(\PP^1)^n}\big)_{|Z_I}$ can be identified with the tangent space of $T_0\AA^n$, with the action of $G$ on
$(x_j)\in \AA^n$ being $z\cdot x_j=z^2x_j$ if $j\in I^c$ and $z\cdot x_j=z^{-2}x_j$ if $j\in I$. 
It follows that $\hbox{\rm weight}_\lambda \big(T_{(\PP^1)^n}\big)_{|Z_I}=2|I^c|-2|I|$. Hence, $\eta_I=2|I|$. Similarly, $\eta'_I=2|I^c|$. 

For the Kempf-Ness strata  $S_I$ and $S'_I$ we make a choice of ``weights''
$$w_I=w'_I=-2s,\quad \text{where}\quad n=2s+1.$$

By the main result of \cite[Theorem 2.10]{DHL}, $D^b_G((\bP^1)^n_{ss})$
is equivalent to the window $\GG_w$ in the equivariant derived category $D^b_G((\bP^1)^n)$, namely the full subcategory
of all complexes of equivariant sheaves $\cF^\bullet$ such that all weights (with respect to corresponding destabilizing $1$-PS)
of the cohomology sheaves of the complex $\sigma_I^*\cF^\bullet$ lie in the segment
$$[w_I,w_I+\eta_I)\quad\hbox{\rm or}\quad [w'_I,w'_I+\eta'_I),\quad\hbox{\rm respectively.}$$

We  prove that the window $\GG_w$ contains all linearized line bundles $L_{E,p}=\cO(-E)\otimes z^p$ 
from Theorem \ref{odd}. Recall that $n=2s+1$. Since the collection is $S_2$ invariant and $S_2$ flips the strata $S_I$ and $S'_I$,
it suffices to check the window conditions for $S_I$.
The $\lambda$-weight of $\cO(-E)\otimes z^p$ restricted to $Z_I$ equals $|I\cap E|-|I^c\cap E|+p$. 
It is straightforward to check that the maximum of this quantity over all $E$ is equal to $2s+2|I|-n+1$ when $s$ is odd, 
or $2s+2|I|-n-1$ when $s$ is even, and the minimum to $-2s$, hence the claim.
Since our  collection of linearized line bundles is clearly an exceptional collection on $D^b_G((\bP^1)^n)$,
it follows it is an exceptional collection in $D^b_G(Z_n)$. 
\ep

\subsection{Fullness}
We will prove the following general statement. 
\begin{thm}\label{full odd}
The collection in Theorem \ref{odd} generates all line bundles 
$$L_{E,p}:=\cO(-E)\otimes z^p,$$
for all $E\subseteq N$, $e=|E|$, $p\in\ZZ$ with $e+p$ even. 
\end{thm}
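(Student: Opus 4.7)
\smallskip
\noindent
\textbf{Proposal.} The plan is to exploit the GIT presentation $Z_n \cong (\PP^1)^n_{ss}\sslash P\GG_m$, together with the $S_2\times S_n$-invariance of the restricted collection, via Koszul resolutions on $(\PP^1)^n$. The basic input is that for any subset $F\subseteq N$ with $|F|\ge s+1$, the Hilbert--Mumford criterion forces the common vanishing locus of the sections $(y_i)_{i\in F}$ (each $y_i$ a $\GG_m$-invariant section of $pr_i^*\cO(1)\otimes z^{-1}$ cutting out $\De_{i\infty}$) to lie in the unstable locus of $(\PP^1)^n$. Hence the Koszul complex of these sections is exact on $(\PP^1)^n_{ss}$ and descends to an exact sequence of line bundles on $Z_n$,
$$0 \to L_{F,|F|} \to \cdots \to \bigoplus_{I \subseteq F,\,|I|=k} L_{I,k} \to \cdots \to \cO \to 0.$$
Tensoring by $L_{G,q}$ for $G\cap F=\emptyset$ yields terms $L_{I\cup G,\,|I|+q}$, and the symmetric construction with $(x_i)_{i\in F_0}$ for $|F_0|\ge s+1$ yields terms $L_{I\cup G,\,-|I|+q}$; mixed Koszul complexes with disjoint $F_0\sqcup F_\infty$ provide further relations. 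The $S_2$-symmetry $L_{E,p}\leftrightarrow L_{E,-p}$ reduces the problem to $p\ge 0$.

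\smallskip
\noindent
Let $\cA\subseteq D^b(Z_n)$ denote the triangulated subcategory generated by the restricted collection; it is $S_2\times S_n$-invariant. I would induct on the excess $d(E,p):=\max(0,|p|+\min(e,n-e)-s)$, with base case $d=0$. The clean case is $e\ge s+1$: apply the $y$-Koszul with $F=E$ (valid since $|E|\ge s+1$), tensored by $L_{\emptyset,p-e}$. This yields an exact sequence with $L_{E,p}$ at the top and remaining terms $L_{I,|I|+p-e}$ for $I\subsetneq E$. Each such term has both $e'=|I|<e$ and $p'=|I|+p-e<p$, and a short case analysis on the sign of $p'$ and the location of $e'$ relative to $n/2$ shows that its excess is strictly smaller; the inductive hypothesis then places all these terms in $\cA$, whence $L_{E,p}\in\cA$.

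\smallskip
\noindent
The delicate case is $e\le s$ with $p>s-e$. A single $y$-Koszul (resp.\ $x$-Koszul) preserves $e-p$ (resp.\ $e+p$), so the excess is not immediately reduced. My plan is to apply the $y$-Koszul with $F_\infty\supsetneq E$ of size exactly $s+1$, tensored by $L_{\emptyset,p-e}$. The resulting exact sequence contains: (a) the top term $L_{F_\infty,\,s+1+p-e}$ with $|F_\infty|=s+1$, which falls under the already handled case $e'\ge s+1$ and is therefore in $\cA$; (b) terms $L_{I,\,|I|+p-e}$ for $I\subsetneq F_\infty$ with $|I|\ne e$, each with strictly smaller excess $d$; and (c) the $S_n$-orbit of terms $L_{I,p}$ with $|I|=e$, $I\subseteq F_\infty$, all having the same $d$ as $L_{E,p}$. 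The problematic items are those in (c); invoking the $S_n$-invariance of $\cA$ and running the same Koszul for all $F_\infty\supsetneq E$ of size $s+1$ yields a linear system whose solution — by representation-theoretic independence of the resulting characters of $S_n$ — expresses the entire orbit $\{L_{E',p}:|E'|=e\}$ in terms of strictly smaller-$d$ objects, completing the induction.

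\smallskip
\noindent
The main obstacle is precisely this $e\le s$ regime, where no single Koszul relation suffices and one must handle whole $S_n$-orbits simultaneously via combined Koszul systems. A cleaner alternative, consistent with the paper's strategy, would be to invoke Section \ref{LM}: the pushforward along $p:\LM_n\to Z_n$ of the full exceptional collection on the Losev--Manin space produces explicit sheaves on $Z_n$ built from the $L_{E,p}$'s, and verifying that these pushforwards lie in $\cA$ reduces fullness directly to the combinatorial claim of Theorem~\ref{full odd}.
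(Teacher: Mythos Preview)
Your framework---Koszul resolutions plus induction---is exactly the paper's approach, but there are two genuine gaps.

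\textbf{First, the induction is mis-specified.} In your $e\ge s+1$ case with $F=E$, the terms $L_{I,p'}$ with $s+1\le |I|<e$ and $p'=|I|+p-e\ge 0$ satisfy
\[
s(I,p')=p'+(n-|I|)=p+(n-e)=s(E,p),
\]
so the excess is \emph{equal}, not strictly smaller as you claim. The paper fixes this by a \emph{double} induction: first on the score $s(E,p)=|p|+\min(e,n-e)$, and within a fixed score, on $|p|$. In the equal-score case one has $|p'|<|p|$, which closes the induction. Your single induction on $d(E,p)$ does not.

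\textbf{Second, and more seriously, your $e\le s$ case goes down the wrong path.} You correctly observe that the $x$-Koszul preserves $e+p$ and hence the score; you then conclude it ``does not immediately reduce the excess'' and switch to a $y$-Koszul with $F_\infty\supsetneq E$, producing the orbit terms (c) that you cannot handle without the vague appeal to ``representation-theoretic independence.'' But the conclusion is wrong: the $x$-Koszul \emph{does} work, once the double induction is in place. Take $I\subseteq N\setminus E$ with $|I|=s+1$ (possible since $|E^c|=n-e\ge s+1$) and tensor the type-(1) complex by $L_{E,p}$. Every nontrivial term is $L_{E\cup J,\,p-j}$ with $J\subseteq I$, $j=|J|>0$; since $E\cap I=\emptyset$ there is a \emph{single} such term for each $J$, and one checks: if $p-j\ge 0$ the score is $\le p+e=s(E,p)$ with $|p-j|<p$; if $p-j<0$ the score is $\le n-e-p<e+p=s(E,p)$ (using $e+p>s$). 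So the double induction finishes immediately---no orbits, no linear systems.

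In short: refine the induction to (score, then $|p|$), and in the $e\le s$ case use the $x$-Koszul with $I\subseteq E^c$ rather than the $y$-Koszul with $F_\infty\supsetneq E$. Both cases then become parallel and clean; the orbit obstacle (c) simply never arises.
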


\bp[Proof of Theorem \ref{odd} - fullness]
By Theorem \ref{full odd}, the collection in Theorem \ref{odd} generates all the objects $Rp_*({\pi^*_I}\hat\GG)$ from Corollary \ref{rewrite}. Fullness then follows by Corollary \ref{S is enough}. Alternatively, it is easy to see that line bundles $L_{E,p}$
generate the derived category of the stack $[(\bP^1)^n/P\GG_m]$ and we can finish as
in \cite[Proposition 4.1]{CT_partII}.
\ep

\bp[Proof Theorem \ref{full odd}]
For simplicity, denote by $\cC$ the collection in the theorem. We introduce the \emph{score} of a pair $(E,p)$, with $e=|E|$ as 
$$s(E,p):=|p|+\min\{e,n-e\}.$$
The collection $\cC$ consists of $L_{E,p}$ with $s(E,p)\leq s$. We prove the statement by induction on the score $s(E,p)$, and for equal score, by induction on $|p|$. 

Let $(E,p)$ be any pair as in Theorem \ref{full odd}. If $s(E,p)\leq s$, there is nothing to prove. Assume 
$s(E,p)>s$. Using $S_2$-symmetry, we may assume  w.l.o.g. that $p\geq0$. 
We will use the two types of $P\GG_m$-equivariant Koszul resolutions from Lemma \ref{G Koszul} to successively generate all objects. 

\smallskip

\underline{Case $e\leq s$.} 
The sequence (1) in Lemma \ref{K} for a set $I$ with $|I|=s+1$ followed by tensoring with $L_{E,p}=\cO(-E)\otimes z^p$, gives
an exact sequence
$$
0\ra L_{E\cup I,p-s-1}\ra\ldots\ra\bigoplus_{J\subseteq I, |J|=j}L_{E\cup J,p-j}\ra\ldots\ra L_{E,p}\ra0.$$
We prove that each term $L_{E\cup J,p-j}$ is generated by $\cC$ for all $j>0$. Note that  $s(E,p)=|p|+e=p+e$.
If $p-j\geq0$, then 
$$s(E\cup J, p-j)\leq (p-j)+(e+j)=p+e=s(E,p),$$ 
but as $p-j<p$, we are done by induction on $|p|$. If $p-j<0$ then 
$$s(E\cup J, p-j)\leq (j-p)+n-(e+j)=n-e-p<e+p=s(E,p)$$
since we assume $e+p>s$. In particular, $L_{E\cup J,p-j}$ is in $\cC$. 

\smallskip 

\underline{Case $e\geq s+1$.} Let $I\subseteq E$, with $|I|=s+1$. The sequence (2) in Lemma \ref{K} for the set $I$, followed by tensoring with 
$L_{E',p-s-1}=\cO(E')\otimes z^{p-s-1}$, where $E'=E\setminus I$, 
gives an exact sequence 
$$0\ra L_{E,p}\ra\ldots\ra\bigoplus_{J\subseteq I, |J|=j}L_{E'\cup J,j+p-s-1}\ra\ldots\ra L_{E',p-s-1}\ra0.$$
We prove that each term $L_{E\cup J,j+p-s-1}$ is generated by $\cC$ for all $J\neq I$ (when $(E'\cup J,j+p-s-1)=(E,p)$).  Note that  
$s(E,p)=p+n-e$. We let $e':=|E'|=e-s-1$.  If $j+p-s-1\geq0$, then 
$$s(E'\cup J, j+p-s-1)\leq (j+p-s-1)+(n-e'-j)=p+n-e=s(E,p).$$
As $p+j-s-1\leq p$ with equality if and only if $J=I$, we are done by induction on $|p|$. If $j+p-s-1<0$, then 
$$s(E'\cup J, j+p-s-1)\leq -(j+p-s-1)+(e'+j)=e-p<s(E,p)=p+n-e,$$
since we assume $s(E,p)>s$, which gives $e-p\leq s$. 
\ep

\begin{lemma}\label{G Koszul}\label{K}
Let $n=2s+1$, $I\subseteq N$, $|I|=s+1$. There are two types of $P\GG_m$-equivariant resolutions:
\bi
\item[(1)] The restriction to $(\PP^1)^n_{ss}$ of the Koszul complex of the intersection of the divisors $\De_{i0}$ (Notation \ref{Delta}) for $i\in I$, which takes the form 
$$0\ra \cO(-I)\otimes z^{-(s+1)}\ra\ldots\ra\bigoplus_{J\subseteq I, |J|=j}\cO(-J)\otimes z^{-j}\ra\ldots\ra\cO\otimes 1\ra0$$
\item[(2)] The restriction to $(\PP^1)^n_{ss}$ of the Koszul complex of the intersection of the divisors $\De_{i\infty}$ (Notation \ref{Delta}) for $i\in I$, which takes the form 
$$0\ra \cO(-I)\otimes z^{(s+1)}\ra\ldots\ra\bigoplus_{J\subseteq I, |J|=j}\cO(-J)\otimes z^j\ra\ldots\ra\cO\otimes 1\ra0$$
\ei
\end{lemma}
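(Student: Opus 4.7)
My plan is to exhibit both sequences as the restrictions to $(\PP^1)^n_{ss}$ of the standard Koszul resolutions on $(\PP^1)^n$ associated to the defining sections of the divisors $\De_{i0}$ (respectively $\De_{i\infty}$) for $i\in I$. The point is that on the semistable locus these resolutions become exact complexes of the stated form, because the common zero locus is entirely unstable.

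First I would pin down the equivariant data. By Lemma \ref{dictionary} we have $\cO(\De_{i0})=pr_i^*\cO(1)\otimes z$ and $\cO(\De_{i\infty})=pr_i^*\cO(1)\otimes z^{-1}$ as $\GG_m$-linearized line bundles, so the defining section $x_i$ (resp.\ $y_i$) of $\De_{i0}$ (resp.\ $\De_{i\infty}$) is a $\GG_m$-equivariant morphism $\cO\otimes 1\to pr_i^*\cO(1)\otimes z$ (resp.\ $\cO\otimes 1\to pr_i^*\cO(1)\otimes z^{-1}$). Dualizing and taking tensor powers, for each $J\subseteq I$,
$$\bigotimes_{i\in J}\cO(-\De_{i0})=\cO(-J)\otimes z^{-|J|},\qquad \bigotimes_{i\in J}\cO(-\De_{i\infty})=\cO(-J)\otimes z^{|J|},$$
which match the terms appearing in (1) and (2) respectively. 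Next I would note that the divisors $\De_{i0}=pr_i^{-1}(0)$ are smooth and, being pullbacks along distinct projections, meet transversely in codimension equal to their number; hence the sections $\{x_i\}_{i\in I}$ form a regular sequence on $(\PP^1)^n$, and the Koszul complex on them is a $\GG_m$-equivariant resolution of $\cO_{\bigcap_{i\in I}\De_{i0}}$. The analogous statement holds for the $y_i$'s.

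The last step is the semistability observation. A point in $\bigcap_{i\in I}\De_{i0}$ has at least $|I|=s+1$ of its coordinates equal to $0$, and since $n=2s+1$ this is strictly larger than $n/2$, so by the Hilbert--Mumford criterion every such point is unstable. Therefore $\bigcap_{i\in I}\De_{i0}\cap(\PP^1)^n_{ss}=\emptyset$, the augmentation $\cO\to\cO_{\bigcap_{i\in I}\De_{i0}}$ restricts to the zero map on $(\PP^1)^n_{ss}$, and the restricted Koszul complex becomes an exact sequence of the form (1). The second sequence follows in exactly the same way, or alternatively by applying the $S_2$-symmetry $z\leftrightarrow z^{-1}$ that exchanges $\De_{i0}$ with $\De_{i\infty}$ and flips the sign of the $\GG_m$-weight. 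I do not see any substantive obstacle: the only bookkeeping is matching the equivariant twists of the Koszul terms, which is immediate from Lemma \ref{dictionary}.
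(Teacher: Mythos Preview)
Your proposal is correct and follows essentially the same approach as the paper: both build the $\GG_m$-equivariant Koszul complex on the regular sequence $\{x_i\}_{i\in I}$ (resp.\ $\{y_i\}_{i\in I}$), identify the terms via $\cO(-\De_{i0})=pr_i^*\cO(-1)\otimes z^{-1}$, and then observe that the common zero locus misses the semistable locus since $|I|=s+1>n/2$. Your explicit invocation of Hilbert--Mumford and the optional $S_2$-symmetry shortcut for (2) are minor embellishments, not a different route.
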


\bp Let $G=P\GG_m$. 
Denote for simplicity $D_i=\De_{i0}$, for all $i\in N$. The divisors $D_1,\ldots, D_n$ intersect with simple normal crossings. 
Let $Y_I:=\cap_{i\in I}D_i\subseteq (\PP^1)^n$.  
Consider the Koszul resolution of $Y_I$:
$$\ldots \ra\oplus_{i<j, i,j\in I}\cO(-D_i-D_j)\ra  \oplus_{i\in I}\cO(-D_I)\ra\cO\ra\cO_{Y_I}\ra0.$$
Each of these maps in the sequence is a direct sum of maps of the form 
$$\cO(-D_{j_1}-\ldots-D_{j_t})\ra \cO(-D_{j_1}-\ldots-D_{j_{t-1}})$$
obtained by multiplication with a canonical section corresponding to the effective divisor $D_{j_t}$. 
This can be made into a $G$-equivariant map:
$$\cO(-D_{j_1}-\ldots-D_{j_t})\otimes z^{-t}\ra \cO(-D_{j_1}-\ldots-D_{j_{t-1}})\otimes z^{-(t-1)}.$$
since $\cO(-D_i)\otimes z^{-1}\ra\cO$ is the $G$-equivariant map given by multiplication with $x_i$, whose zero locus 
is $D_i=\De_{i0}$ (see Lemma \ref{dictionary} and the discussion preceding it). The Lemma follows by restriction to $(\PP^1)^n_{ss}$. Note that $Y_I\cap(\PP^1)^n_{ss}=\emptyset$. 
The proof of (2) is similar, with the only difference that multiplication with $y_i$, the canonical section of $\De_{i\infty}$ corresponds to a $G$-equivariant
map $\cO(-\De_{i\infty})\otimes z\ra\cO$. 
\ep

\begin{rmk}\label{elaborate1}
We explain the connection with case $p=2$, $q=n=2s+1$ of \cite[Theorem 1.10]{CT_partII}. 
The collection there is the following:

(i) The line bundles $F_{0,E}:=-\frac{1}{2}\sum_{j\in E}\psi_j$ ($e=|E|$ is even)
 in the so-called group $1$ (group $1A$ and group $1B$ of the theorem coincide in this case).

(ii) The line bundles in the so-called group $2$: 
$$\cT_{l,\{u\}\cup E}:=\si_u^*\big(\om_{\pi}^{\frac{e+1-l}{2}}(E\cup\{u\})\big)=
\frac{e-l-1}{2}\psi_u+\sum_{j\in E}\de_{ju}=-\frac{l+1}{2}\psi_u-\sum_{j\in E}\frac{1}{2}\psi_j$$ 
where $e=|E|$, $u\in\{0,\infty\}$, $l\geq0$, $l+|E\cap\{u\}|$ even (i.e., $l+e$ odd), with
$$l+\min\{e,n-e\}\leq s-1.$$

This collection is the dual of the one in Theorem \ref{odd}.
The elements in group $2$  with $l=p-1$, $u=\infty$  
recover the dual of the collection in Theorem \ref{odd} when $p>0$. Similarly, 
elements in group $2$ with $l=-p-1$, $u=0$ recover the dual of the collection in Theorem \ref{odd} when $p<0$. The elements of 
group $1$ recover the dual of the collection in Theorem \ref{odd}  when $p=0$. 
\end{rmk}


\section{Proof of Theorem \ref{even}}

We employ a similar strategy as in Section \ref{odd section}. We identify the Hassett space 
$Z_N$ (see (\ref{Z})) when $n=|N|$ is even with the Kirwan resolution of the symmetric GIT quotient $\Si_n$. 
We use the method of windows \cite{DHL} to prove the exceptionality part of Theorem \ref{even}. We prove fullness using previous results on Losev--Manin 
spaces $\LM_N$ (see Section \ref{LM}). 

\subsection{The space $Z_N$ as a GIT quotient, $n$ even}\label{even intro} 
Assume $n=2s+2$. There are $n\choose s+1$ strictly semistable points $\{p_T\}\in(\PP^1)_{ss}^n$ one for each subset $T\subseteq N$, $|T|=s+1$.
More precisely, the point $p_T$ is obtained by taking $\infty$ for spots in $T$ and $0$ for spots in $T^c$.  
Instead of the GIT quotient $\Si_n$, which is singular at the images of these points, we consider its Kirwan resolution $\tilde\Si_n$
constructed as follows. 

Let $W=W_n$ be the blow-up of $(\bP^1)^n$ at the points $\{p_T\}$ and let $\{E_T\}$ be the corresponding exceptional divisors. 
The action of $\GG_m$ lifts to $W$. To describe this action locally around a point $p_T$, assume for simplicity
$T=\{s+2,\ldots, n\}$ around the point $p_T$. Consider the affine chart 
$$\AA^n=(\PP^1\setminus\{\infty\})^{s+1}\times(\PP^1\setminus\{0\})^{s+1}$$ 
In the new coordinates, we have $p_T=0=(0,\ldots,0)$. We let 
$((x_i), (y_i))$, resp., $((t_i), (u_i))$, for $i=1,\ldots, s$, 
be coordinates on $\AA^n$, resp., $\PP^{n-1}$.  
Then $W$ is locally
the blow-up $\Bl_0\AA^n$, with equations 
$$x_it_j=x_jt_i,\quad  x_iu_j=y_jt_i,\quad y_iu_j=y_ju_i.$$ 
The action of $\GG_m$ on $W$ is given by
$$z\cdot\big((x_i,y_i),[t_i,u_i]\big)=\big((z^2x_i,z^{-2}y_i),[z^2t_i,z^{-2}u_i]\big).$$
The fixed locus of the action of $\GG_m$ on $E_T$ consists of the subspaces 
$$Z^+_T=\{u_1=\ldots=u_{s+1}=0\}=\PP^s\subseteq\PP^{n-1}=E_T,$$
$$Z^-_T=\{t_1=\ldots=t_{s+1}=0\}=\PP^s\subseteq\PP^{n-1}=E_T.$$

As $\Bl_0\AA^n$ is the total space $\VV(\cO_{E_T}(-1))$ of the line bundle $\cO_{E_T}(-1)=\cO_{E_T}(E_T)$ and the action of $\GG_m$  on $\Bl_0\AA^n$ coincides with the 
canonical action of $\GG_m$  on $\VV(\cO_{E_T}(-1))$ coming from the action of $G$ on $E_T=\PP^{n-1}$ given by
$$z\cdot[t_1,\ldots, t_{s+1}, u_1,\ldots, u_{s+1}]=[z^2t_1,\ldots, z^2t_{s+1}, z^{-2}u_1,\ldots, z^{-2}u_{s+1}],$$
it follows that $\cO_{E_T}(E_T)$ (and hence, $\cO(E_T)$) has a canonical $\GG_m$-linearization. With respect to this linearization, we have: 
\begin{equation}\label{B}
\wt_\la{\cO_{E_T}(-1)}_{|q}=\wt_\la{\cO(E_T)}_{|q}=+2,\quad q\in Z^+_T,\quad \la(z)=z,
\end{equation}
$$\wt_{\la}{\cO_{E_T}(-1)}_{|q}=\wt_{\la}{\cO(E_T)}_{|q}=-2,\quad q\in Z^-_T,\quad \la(z)=z.$$
and similarly, 
$$\wt_{\la'}{\cO_{E_T}(-1)}_{|q}=\wt_{\la'}{\cO(E_T)}_{|q}=-2,\quad q\in Z^+_T,\quad \la'(z)=z^{-1}.$$
$$\wt_{\la'}{\cO_{E_T}(-1)}_{|q}=\wt_{\la'}{\cO(E_T)}_{|q}=+2,\quad q\in Z^-_T,\quad \la'(z)=z^{-1}.$$

We denote by $\cO(\bar j)(\sum \al_T E_T)$ the line bundle 
$\pi^*\cO(j_1,\ldots, j_n)(\sum \al_T E_T)$ on $W_n$ (where $j_i, \al_T$ integers and $\pi: W_n\ra (\PP^1)^n$ is the blow-up map), 
with the $\GG_m$-linearization given 
by the tensor product of the canonical linearizations above.
As before, for every equivariant coherent sheaf 
$\cF$, we denote by $\cF\otimes z^k$ the tensor product with $\cO\otimes z^k$.
For a subset $E\subseteq N$, we denote 
$$\cO(-E):=\pi^*\cO(\bar j)$$ with $j_i=-1$ if $i\in E$ and $j_i=0$ otherwise.  
Note that the action of $S_2$ exchanges $\cO(-E)\otimes z^p$ with $\cO(-E)\otimes z^{-p}$ and $E_T$ with $E_{T^c}$ (Lemma \ref{dictionary2}).  

Consider the GIT quotient with respect to a (fractional) polarization 
$$\cL=\cO(1,\ldots,1)\left(-\eps\sum E_T\right),$$
where $0<\eps\ll 1$, $\eps\in\QQ$, and the sum is over all exceptional divisors (with the canonical polarization described above):
$$\tilde\Si_n=(W_n)_{ss}\sslash_{\cL}\GG_m.$$


\begin{lemma}
The $\GG_m$-linearized line bundle $\cO(\bar j)(\sum \al_T E_T)\otimes z^p$ descends to the GIT quotient $\tilde\Si_n$ if and only if
for all subsets $I\subseteq N$ with $|I|\neq s+1$
$$-\sum_{i\in I}j_i+\sum_{i\in I^c} j_i+p\quad \text{ is even }$$ and for all subsets $I\subseteq N$ with $|I|=s+1$, we have
$$-\sum_{i\in I}j_i+\sum_{i\in I^c} j_i+p\pm2\al_I\quad \text{ is divisible by }\quad 4.$$ 
\end{lemma}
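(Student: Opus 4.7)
The plan is to apply Kempf's descent lemma: after the Kirwan blow-up, every $\GG_m$-orbit in $(W_n)_{ss}$ is closed and has finite stabilizer, so the linearized line bundle $L = \cO(\bar j)(\sum_T \al_T E_T) \otimes z^p$ descends to $\tilde\Si_n$ if and only if for every $x \in (W_n)_{ss}$ the finite stabilizer $\text{Stab}_{\GG_m}(x)$ acts trivially on the fiber $L_x$.

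First I would identify these stabilizers. The action of $\GG_m$ on $\PP^1$ has kernel $\mu_2 = \{\pm 1\}$, which therefore acts trivially on all of $W_n$ and is contained in every stabilizer; away from the exceptional divisors $E_T$ this is the entire generic stabilizer. On $E_T = \PP^{n-1}$, the local model from Subsection \ref{even intro} gives the action $z \cdot [t_i, u_i] = [z^2 t_i, z^{-2} u_i]$, so a point with some $t_i \neq 0$ and some $u_j \neq 0$ is fixed exactly by $\mu_4$. The only other $\GG_m$-fixed loci inside $E_T$ are $Z_T^\pm$; I would exclude these from the semistable locus by Hilbert--Mumford, using that with $\cL = \cO(1,\ldots,1)(-\eps \sum E_T)$ and~(\ref{B}) one has $\wt_{\la}\cL|_{Z_T^+} = -2\eps$ while $\wt_{\la'}\cL|_{Z_T^+} = +2\eps$ (and symmetrically at $Z_T^-$), so both $Z_T^\pm$ are unstable. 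Hence inside $(W_n)_{ss}$ the only stabilizers that occur are $\mu_2$ generically and $\mu_4$ along $E_T \cap (W_n)_{ss}$.

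Next I would translate triviality of each stabilizer action into the stated arithmetic conditions. For any connected component of the fixed locus of $\mu_r \subseteq \GG_m$ ($r \in \{2,4\}$), the character by which $\mu_r$ acts on the restriction of $L$ to that component is $\zeta \mapsto \zeta^{w}$, where $w$ is the $\GG_m$-weight reduced modulo $r$; this character is locally constant. Since $\mu_2$ acts trivially on all of $W_n$, the corresponding character on $L$ is globally constant and can be evaluated at any $\GG_m$-fixed point $Z_I \in (\PP^1)^n$ with $|I| \neq s+1$ (the ones with $|I|=s+1$ have been blown up, and the exceptional divisors $E_T$ do not pass through $Z_I$): the weight there is $-\sum_{i \in I} j_i + \sum_{i \in I^c} j_i + p$, so $\mu_2$-triviality gives the condition that this quantity be even. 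For the $\mu_4$-stabilizer along $E_T$ I would evaluate the character at the $\GG_m$-fixed points $Z_T^\pm \subset E_T$; by~(\ref{B}) the weight of $\cO(E_T)$ at $Z_T^\pm$ equals $\pm 2$, so the weight of $L$ at $Z_T^\pm$ is $-\sum_{i \in T} j_i + \sum_{i \in T^c} j_i + p \pm 2\al_T$, which must be divisible by $4$. The two values for $\pm$ differ by $4\al_T$ and hence yield equivalent conditions, matching the statement.

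The main obstacle I anticipate is the first step: verifying rigorously that $Z_T^\pm$ exhaust the unstable part of $E_T$ and that the generic stabilizer along $E_T \cap (W_n)_{ss}$ is exactly $\mu_4$ (and not larger). Both facts are exactly what produce the ``divisible by $4$'' condition, and they should follow from standard Kirwan desingularization theory combined with the explicit local coordinates in Subsection \ref{even intro}. Once these stabilizers are pinned down, the arithmetic conditions follow by a direct weight computation at $Z_I$ and $Z_T^\pm$ using~(\ref{B}) and the linearization conventions.
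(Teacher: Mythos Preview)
Your proposal is correct and follows essentially the same route as the paper: Kempf descent, identification of the stabilizers as $\mu_2$ generically and $\mu_4$ along $E_T\cap (W_n)_{ss}$, and a weight computation at the $\GG_m$-fixed loci $Z_I$ and $Z_T^\pm$ using (\ref{B}). The only cosmetic difference is that the paper evaluates the weight at a semistable point $q$ via the limit $\lim_{t\to 0}\la(t)\cdot q$, whereas you invoke constancy of the $\mu_r$-character on the connected fixed locus to evaluate directly at the (unstable) fixed points; these are the same computation.
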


\bp
By Kempf's descent lemma, a $G$-linearized line bundle $L$ descends to the GIT quotient if and only if the stabilizer of any point in the semistable 
locus acts trivially on the fiber of $L$ at that point, or equivalently, $\wt_\la L_{|q}=0$, for any semistable point $q$ and any $1$-PS
$\la:\GG_m\ra G$. By definition, $\wt_\la L_{|q}=\wt_\la L_{|p_0}$, where $p_0$ is the fixed point $\lim_{t\to 0}\la(t)\cdot q$. 

For any point $q$ in $(\PP^1)^n\setminus\{p_T\}$ such that $q=(z_i)$ has $z_i=\infty$ for $i\in I$ and 
$z_i\neq\infty$ for $i\in I^c$, we have for $\la(z)=z$ that $\lim_{t\to 0}\la(t)\cdot q$ is the point with coordinates
$z_i=\infty$ for $i\in I$ and $z_i=0$ for $i\in I^c$, and hence: 
\begin{equation}\label{A}
\wt_{\la}\big(\cO(\bar j)(\sum \al_T E_T)\otimes z^p\big)_{|q}=-\sum_{i\in I}j_i+\sum_{i\in I^c} j_i+p.
\end{equation}
Note that such a point $q$ is semistable if and only if $|I|<s+1$. 
Similarly, if $q$ has $z_i=0$ for $i\in I^c$ and 
$z_i\neq0$ for $i\in I$, $\la'(z)=z^{-1}$:
$$\wt_{\la'}\big(\cO(\bar j)(\sum \al_T E_T)\otimes z^p\big)_{|q}=\sum_{i\in I}j_i-\sum_{i\in I^c} j_i+p.$$
Note, $q$ is semistable iff $|I|>s+1$. The stabilizer of $q$ is $\{\pm1\}$ in both cases. 

If $q\in E_T\setminus (Z^+_T\sqcup Z^-_T)$ then $\lim_{t\to 0}\la(t)\cdot q\in Z^-_T$,
$\lim_{t\to 0}\la'(t)\cdot q\in Z^+_T$ and using (\ref{B}) we obtain
$$\wt_{\la}\big(\cO(\bar j)(\sum \al_T E_T)\otimes z^p\big)_{|q}=-\sum_{i\in I}j_i+\sum_{i\in I^c} j_i+p-2\al_T,$$
$$\wt_{\la'}\big(\cO(\bar j)(\sum \al_T E_T)\otimes z^p\big)_{|q}=\sum_{i\in I}j_i-\sum_{i\in I^c} j_i+p-2\al_T.$$
A point $q\in E_T\setminus (Z^+_T\sqcup Z^-_T)$ has stabilizer $\{\pm1, \pm i\}$. The conclusion follows. 
\ep

\begin{cor}
For $E\subseteq N$, $p\in\ZZ$, the line bundle $\cO(-E)(\sum\al_T E_T)\otimes z^p$ descends to the GIT quotient $\tilde\Si_n$ if and only if 
for all subsets $I\subseteq N$ with $|I|\neq s+1$
$$|I\cap E|-|I^c\cap E|+p\quad \text{ is even }$$ and for all subsets $I\subseteq N$ with $|I|=s+1$, we have
$$|I\cap E|-|I^c\cap E|+p-2\al_I\quad \text{ is divisible by }\quad 4.$$ 
\end{cor}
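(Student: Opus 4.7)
The plan is to view the corollary as a direct specialization of the preceding lemma, obtained by plugging in the specific shape of the line bundle $\cO(-E)$. First I would unpack the definition $\cO(-E) = \pi^*\cO(\bar j)$, where $j_i = -1$ for $i \in E$ and $j_i = 0$ for $i \notin E$. With this choice, the quantity appearing in the lemma computes as
\begin{equation*}
-\sum_{i\in I}j_i + \sum_{i\in I^c}j_i \;=\; \sum_{i\in I\cap E}1 \;-\; \sum_{i\in I^c\cap E}1 \;=\; |I\cap E| - |I^c\cap E|.
\end{equation*}
Substituting this into the two parity/divisibility conditions of the lemma immediately yields the two conditions stated in the corollary.

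The only small point to address concerns the $\pm$ in the lemma's second condition: the lemma requires $-\sum_{i\in I}j_i + \sum_{i\in I^c}j_i + p \pm 2\alpha_I$ to be divisible by $4$ for both choices of sign, whereas the corollary only records the $-2\alpha_I$ version. I would note that the two quantities $A + 2\alpha_I$ and $A - 2\alpha_I$ (with $A = |I\cap E| - |I^c\cap E| + p$) differ by $4\alpha_I$, which is automatically divisible by $4$; hence divisibility by $4$ of one is equivalent to divisibility by $4$ of the other, and stating only the $-2\alpha_I$ form is equivalent.

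There is no real obstacle here; the statement is a direct translation via Kempf's descent criterion already carried out in the lemma, and the proof is essentially a line of arithmetic. I would write it in two sentences: invoke the lemma, perform the substitution of $\bar j$, and observe the $\pm$ reduction.
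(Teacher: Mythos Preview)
Your proposal is correct and matches the paper's treatment: the corollary is stated without proof immediately after the lemma, so it is meant as the direct specialization you describe. Your observation that the $+2\al_I$ and $-2\al_I$ conditions are equivalent (since they differ by $4\al_I$) is exactly the small point needed to reconcile the lemma's $\pm$ with the corollary's single sign.
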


\begin{lemma}\label{identify2}
If $n=|N|$ is even, the Hassett space $Z_N=\M_{0,(\frac{1}{2}+\eta, \frac{1}{2}+\eta,\frac{1}{n},\ldots,\frac{1}{n})}$ 
is isomorphic to the GIT quotient $\tilde\Si_n=(W_n)_{ss}\sslash_{\cO(1,\ldots,1)(-\epsilon\sum E_T)}\bG_m.$ 
\end{lemma}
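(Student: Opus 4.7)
The strategy mirrors the proof of Lemma \ref{identify} in the odd case, but with an extra construction needed to resolve the failure of $\cA$-stability over the blow-up centers $p_T \in (\PP^1)^n$, where $\cA = (\frac{1}{2}+\eta, \frac{1}{2}+\eta, \frac{1}{n},\ldots,\frac{1}{n})$. The goal is to exhibit a $\GG_m$-equivariant family of $\cA$-stable rational curves over $(W_n)_{ss}$, descend it to a morphism $f \colon \tilde\Si_n \to Z_N$, and verify bijectivity on closed points; since both sides are smooth, $f$ is then an isomorphism.

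First I would construct the universal family. Pull back the trivial $\PP^1$-bundle $\rho \colon (\PP^1)^n \times \PP^1 \to (\PP^1)^n$ from the proof of Lemma \ref{identify} along the blow-up $\pi \colon W_n \to (\PP^1)^n$, together with the sections $s_0, s_\infty, s_1, \ldots, s_n$. Away from the exceptional divisors $E_T$ this family is already $\cA$-stable (this is the stable locus handled exactly as in the odd case). Over a generic point of $E_T$, however, the sections $s_i$ for $i \in T$ all collide at $s_\infty$ and the sections $s_j$ for $j \in T^c$ all collide at $s_0$, violating stability since $|T|/n = |T^c|/n = (s+1)/n > \tfrac{1}{2} - \eta$. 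To correct this, I would apply Hassett's semistable reduction (compare the construction used in Lemma \ref{general statement}): blow up the total space $W_n \times \PP^1$ along the codimension-two loci corresponding to $E_T \times \{\infty\}$ and $E_T \times \{0\}$ for each $T$. This produces two extra $\PP^1$-components in the fiber over a generic point of $E_T$, so that the resulting nodal fiber has $T \cup \{\infty\}$ on one component and $T^c \cup \{0\}$ on the other, matching the description of the boundary divisor $\de_{T \cup \{\infty\}}$ recalled in the introduction.

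Next I would verify that the modified family is $\GG_m$-equivariant (the blow-up centers are $\GG_m$-invariant since they are cut out by the $\GG_m$-equivariant sections $x_i, y_i$ pulled back from $(\PP^1)^n$) and that fiber by fiber over $(W_n)_{ss}$ it is $\cA$-stable. This yields a $\GG_m$-invariant morphism $F \colon (W_n)_{ss} \to Z_N$, which, since $\tilde\Si_n$ is a categorical quotient, factors through a morphism $f \colon \tilde\Si_n \to Z_N$. The equivariance for $S_2 \times S_n$ is automatic from the symmetry of the construction in the markings.

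Finally I would check bijectivity on closed points. Every $\cA$-stable rational curve either has all $n+2$ markings on a single $\PP^1$ (and is classified by a stable orbit in $(\PP^1)^n$, as in the odd case) or else has exactly one node with $T \cup \{\infty\}$ on one side and $T^c \cup \{0\}$ on the other for some $T$ with $|T| = s+1$; the latter configurations are represented by points of the exceptional divisor $E_T$ and, modulo $\GG_m$, are parametrized precisely by $\PP^s \times \PP^s$. Two points of $(W_n)_{ss}$ yield isomorphic $\cA$-stable curves iff they lie in the same $\GG_m$-orbit. Since $\tilde\Si_n$ is smooth by construction (Kirwan desingularization of $\Si_n$) and $Z_N$ is smooth by Hassett, the bijective morphism $f$ is an isomorphism. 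The \emph{main obstacle} is the careful local analysis at the exceptional divisors $E_T$: one must confirm that the two successive blow-ups of the total space produce a flat family of curves whose special fibers have exactly the combinatorial structure of the generic point of $\de_{T \cup \{\infty\}}$ and that the $\GG_m$-linearization extends through these modifications. This can be bypassed by appealing to the general GIT-vs-Hassett dictionary in \cite{Ha}, as remarked right after the statement of Theorem \ref{Z description}.
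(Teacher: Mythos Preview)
Your overall approach matches the paper's, but there is a genuine gap in the construction of the family over $E_T$. After blowing up $W_n\times\PP^1$ along $E_T\times\{0\}$ and $E_T\times\{\infty\}$, the fiber over a point of $E_T$ is a chain of \emph{three} $\PP^1$'s, not two: the proper transform $\tilde F$ of the original fiber sits in the middle, joined at each end to the exceptional curves $C_0$ and $C_\infty$. The markings from $T\cup\{\infty\}$ land on $C_\infty$ and those from $T^c\cup\{0\}$ land on $C_0$, but $\tilde F$ carries no markings at all and has only two special points (the nodes). Thus the blown-up family is \emph{not} $\cA$-stable: the log-canonical bundle has degree $0$ on $\tilde F$.

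The paper handles this by observing that $\omega_{\pi'}$ is relatively nef and trivial on $\tilde F$, so it induces a further contraction $\cC'\to\cC$ over $W_{ss}$ that collapses each $\tilde F$ to a point, producing the two-component $\cA$-stable fiber you describe. You should insert this contraction step explicitly; without it the map $F\colon(W_n)_{ss}\to Z_N$ is not defined, since the modified family does not land in the moduli functor. Once this is fixed, the rest of your argument (descent via the categorical quotient, bijectivity on closed points, and the conclusion from smoothness of both sides) is correct and is exactly what the paper does.
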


\bp
The trivial $\PP^1$-bundle $(\PP^1)^n\times \PP^1\ra (\PP^1)^n$ has sections $s_0$, $s_{\infty}$, $s_i$. We still denote by $s_0$, $s_{\infty}$, $s_i$ the induced sections of the pull back
$W_{ss}\times\PP^1\ra W_{ss}$.
The family is not $\cA$-stable at the points $p_T$, where $s_i=s_{\infty}$ for all $i\in T$ and $s_i=s_0$ for all $i\in T^c$
(markings in $T$ are identified with $\infty$, and markings in $T^c$ with $0$). 
Here
$\cA=(\frac{1}{2}+\eta, \frac{1}{2}+\eta,\frac{1}{n},\ldots,\frac{1}{n})$. Let $\cC'$ be the blow-up of $W\times \PP^1$ along the codimension 
$2$ loci 
$$E_T\times\{0\}=s_0(E_T),\quad E_T\times\{\infty\}=s_{\infty}(E_T).$$
Denote by $\tilde E^0_T$ and $\tilde E^{\infty}_T$ the corresponding exceptional divisors in $\cC'$. 
The resulting family $\pi': \cC'\ra W$ has fibers above points $p\in E_T$ a chain of $\PP^1$'s of the form $C_0\cup \tilde F\cup C_{\infty}$, where $\tilde F$ is the 
proper transform of the fiber of $W\times\PP^1\ra W$ and $\tilde F$  meets each of $C_0$ (the fiber of $\tilde E^0_T\ra E_T$ at $p$) and $C_{\infty}$ 
(the fiber of $\tilde E^{\infty}_T\ra E_T$ at $p$). The proper transforms of $s_i$ for $i\in T$ (resp., $i\in T^c$) intersect $C_{\infty}$ (resp., $C_0$) at distinct points. 
The dualizing sheaf $\om_{\pi'}$ is relatively nef, with degree $0$ on $\tilde F$. It follows that $\om_{\pi'}$ induces a morphism $\cC'\ra \cC$ over $W_{ss}$ which contracts the component $\tilde F$ in each of the above fibers, resulting in an $\cA$-stable family. Therefore, we have an induced morphism $F: W_{ss}\ra Z_N$. Clearly, the map 
$F$ is $\GG_m$-equivariant (where $\GG_m$ acts trivially on $Z_N$). As the GIT quotient $\tilde\Si_n$ is a categorical quotient,
there is an induced morphism $f: \tilde\Si_n\ra Z_N$. Two elements of the family $\cC\ra W_{ss}$ are isomorphic if and only if they belong to the same orbit under the action of $\GG_m$. Hence, the map $f$ is one-to-one on closed points (as there are no strictly semistable points in $W_{ss}$, $\tilde\Si_n$ is a good 
categorical quotient \cite[p. 94]{Dolgachev}). It follows that $f$ is an isomorphism. 
\ep

\begin{lemma}\label{dictionary2}
Assume $n=2s+2$ is even. We have the following dictionary between tautological line bundles on the Hassett space $Z_N$ (idenitified with  
the GIT quotient $\tilde\Si_n$) and $\GG_m$-linearized line bundles on $W_n$:
$$\cO(\de_{i0})=pr^*_i\cO(1)\left(-\sum_{i\notin T}E_T\right)\otimes z,\quad \cO(\de_{i\infty})=pr^*_i\cO(1)\left(-\sum_{i\in T}E_T\right)\otimes z^{-1}$$
$$\psi_0=\cO\left(\sum E_T\right)\otimes z^{-2},\quad \psi_{\infty}=\cO\left(\sum E_T\right)\otimes z^{2},\quad \psi_i=pr^*_i\cO(-2)\left(\sum E_T\right)\otimes 1,$$
$$\cO(\de_{T\cup\{\infty\}})=\cO(2E_T)\otimes 1\quad (|T|=s+1).$$
\end{lemma}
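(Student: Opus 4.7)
The plan is to prove the dictionary in four steps, paralleling Lemma~\ref{dictionary} in the odd case but with extra bookkeeping for the exceptional divisors $E_T$ of $\pi:W_n\to(\PP^1)^n$.

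\emph{Step 1: $\cO(\de_{T\cup\{\infty\}})=\cO(2E_T)\otimes 1$.} The map $W_{ss}\to\tilde\Si_n=Z_N$ is a good quotient by $P\GG_m$, free off the exceptional divisors. Along $E_T\cap W_{ss}$ the $\GG_m$-stabilizer is $\{\pm 1,\pm i\}$; modding out the kernel $\{\pm 1\}$ of $\GG_m\to P\GG_m$ leaves an effective stabilizer $\ZZ/2\ZZ$, which by the weight calculation~(\ref{B}) acts by $-1$ on the conormal direction to $E_T$. Hence the quotient is simply branched along $E_T$, the prime divisor $\de_{T\cup\{\infty\}}$ pulls back to $2E_T$, and the canonical $\GG_m$-linearization of $\cO(2E_T)$ has weight $\pm 4$ at points of $Z^\pm_T$, trivial on $\{\pm 1,\pm i\}$. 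By Kempf descent, $\cO(2E_T)\otimes 1$ descends and, by the ramification computation, equals $\cO(\de_{T\cup\{\infty\}})$.

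\emph{Step 2: $\cO(\de_{i0})$ and $\cO(\de_{i\infty})$.} In the construction of the universal family $\cC\to Z_N$ in Lemma~\ref{identify2}, the sections $\si_0$ and $\si_i$ lift to the obvious sections $s_0$ and $s_i$ on $W_{ss}\times\PP^1$. Thus $\de_{i0}=\si_i^*\si_0$ pulls back to the proper transform $\tilde\De_{i0}=\pi^*\De_{i0}-\sum_{i\notin T}E_T$ on $W_{ss}$, the exceptional contributions arising from those $p_T$ with $i\in T^c$. Since $\De_{i0}$ is cut out by the $\GG_m$-equivariant section $x_i$ of $pr_i^*\cO(1)\otimes z$ (exactly as in Lemma~\ref{dictionary}) and each $\cO(-E_T)$ carries its canonical linearization with trivial character, the linearized class of $\tilde\De_{i0}$ is $pr_i^*\cO(1)(-\sum_{i\notin T}E_T)\otimes z$. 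The formula for $\cO(\de_{i\infty})$ follows by $S_2$-symmetry ($z\leftrightarrow z^{-1}$, $T\leftrightarrow T^c$), or equivalently from the section $y_i$ of $pr_i^*\cO(1)\otimes z^{-1}$.

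\emph{Step 3: the $\psi$-classes.} Fix any $i\in N$ and apply the first relation of Lemma~\ref{relations2}: $\psi_0=\de_{i\infty}-\de_{i0}+\sum_{i\in T,|T|=s+1}\de_{T\cup\{\infty\}}$. Substituting from Steps~1 and~2, the $pr_i^*\cO(1)$-contributions cancel, the exceptional coefficients combine as $-\sum_{i\in T}E_T+\sum_{i\notin T}E_T+2\sum_{i\in T}E_T=\sum_T E_T$, and the character becomes $z^{-2}$; this gives $\psi_0=\cO(\sum_T E_T)\otimes z^{-2}$, and $\psi_\infty$ follows by $S_2$-symmetry. For $\psi_i$ with $i\in N$, use the reduction $p':Z_N\to Z'_N$; the universal family on $Z'_N$ is a $\PP^1$-bundle with $\de'_{0\infty}=0$, so Lemma~\ref{relations} yields $\psi'_i=-\de'_{i0}-\de'_{i\infty}$. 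Applying Lemma~\ref{general statement} to $p'$, the only collapsed divisors are $\de_{T\cup\{\infty\}}$ for $|T|=s+1$, uniquely represented by $I=T^c\cup\{0\}$; a short bookkeeping shows that the corrections to $(p')^*\psi'_i$ and to $-(p')^*(\de'_{i0}+\de'_{i\infty})$ exactly cancel, so $\psi_i=-\de_{i0}-\de_{i\infty}$, and substituting from Step~2 gives $\psi_i=pr_i^*\cO(-2)(\sum_T E_T)\otimes 1$.

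The main obstacle is the multiplicity-$2$ descent in Step~1: one must correctly identify the finite $\GG_m$-stabilizer along $E_T$, separate the $\{\pm 1\}$ kernel of $\GG_m\to P\GG_m$ that acts trivially, and confirm via~(\ref{B}) that the residual $\ZZ/2\ZZ$ acts non-trivially on the conormal direction. Once this branching is established, everything else is a formal consequence of the canonical $\GG_m$-linearizations together with Lemmas~\ref{relations2},~\ref{relations}, and~\ref{general statement}.
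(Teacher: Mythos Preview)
Your proof is correct. The overall structure matches the paper's, but Step~1 takes a genuinely different route. The paper computes $F^*\cO(\de_T)=\cO(2E_T)$ by writing the GIT quotient map in explicit local coordinates on $\Bl_0\AA^n$ and on the blow-up of the affine quotient $Y\subseteq\AA^{(s+1)^2}$, reading off the multiplicity~$2$ from the formula $z_{1j}=x_1^2u_j$, and then pinning down the trivial character twist by an $S_n\times S_2$-symmetry argument. Your ramification argument via the residual $\ZZ/2\ZZ$ stabilizer along $E_T$ is more conceptual and avoids coordinates; it yields the divisorial pullback $F^*\de_T=2E_T$ directly. One point worth making explicit: the ramification alone only identifies the underlying line bundle, not the linearization, so to conclude $\cO(2E_T)\otimes 1$ rather than $\cO(2E_T)\otimes z^{4k}$ you are using that both $F^*\cO(\de_T)$ and $\cO(2E_T)$ with its canonical linearization carry invariant sections with the same zero divisor $2E_T$, hence agree as linearized bundles. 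Your Step~3 derivation of $\psi_i=-\de_{i0}-\de_{i\infty}$ via Lemma~\ref{general statement} applied to $p':Z_N\to Z'_N$ is actually more explicit than the paper, which simply asserts that ``all other relations follow by $S_2$-symmetry and Lemma~\ref{relations2}'' without spelling out the $\psi_i$ case.
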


\bp
Denote $\de_T=\de_{T\cup\{\infty\}}$. 
We start with the proof of $\cO(\de_T)=\cO(2E_T)\otimes 1$. 
Consider the affine chart $$\AA^n=(\PP^1\setminus\{\infty\})^{s+1}\times (\PP^1\setminus\{0\})^{s+1}$$ 
around the point $p_T$ (markings in $T=\{s+2,\ldots,n\}$ are identified with $\infty$, and markings in $T^c$ with $0$). 
We have coordinates $x_1,\ldots, x_{s+1}, y_1,\ldots, y_{s+1}$. 
The GIT quotient map $(\PP^1)^n_{ss}\ra \Si$ is locally at $p_T$ given by
$$f: \AA^n\ra Y=f(\AA^n)\subseteq\AA^{(s+1)^2},\quad f((x_i),(y_j))=(x_iy_j)_{ij}.$$
The morphism $F: W_{ss}\ra \tilde\Si_n=\tilde\Si$ induced by the universal family over $W_{ss}$ (proof of Lemma \ref{identify2}) is locally the 
restriction to the semistable locus of the rational map (which we still call $F$)
$$F: \Bl_0\AA^n\dra\Bl_0 Y\subseteq\Bl_0\AA^{(s+1)^2}.$$
Consider coordinates $((x_i,y_i),[t_i,u_i])$ (with $x_it_j=x_jt_i$, $x_iu_j=y_jt_i$, $x_it_j=x_jt_i$) on $\Bl_0\AA^n\subseteq \AA^n\times\PP^{n-1}$ and coordinates 
$(z_{ij},[w_{ij}])$ on $\Bl_0\AA^{(s+1)^2}$ (with $z_{ij}w_{kl}=z_{kl}w_{ij}$). Consider the affine charts $U_1=\{t_1\neq0\}\subseteq \Bl_0\AA^n$  and 
$V_{1j}=\{w_{1j}\neq0\}\subseteq \Bl_0\AA^{r^2}$. The map $F_{|U_1}$ is the rational map
$$F: U_1=\AA^n_{x_1,t_2,\ldots,t_r,u_1,\ldots,u_r}\dra V_{1j}=\AA^{r^2}_{z_{1j},(w_{kl})_{kl\neq 1j}},$$
$$z_{1j}=x_1^2u_j,\quad w_{kl}=\frac{t_ku_l}{u_j}.$$
The exceptional divisor $\tilde E$ in $\Bl_0\AA^{(s+1)^2}$ has local equation $z_{1j}=0$ in $V_{1j}$, while the exceptional divisor $E_T$ of $\Bl_0\AA^n$ has equation
$x_1=0$ in $U_1$. It follows that $F^*\cO(\tilde E)=\cO(2E_T)$. In particular, as $\de_T=\Bl_0 Y\cap \tilde E$, it follows that 
$F^*\cO(\de_T)=\cO(2E_T)$. It follows that $\cO(\de_T)=\cO(2E_T)\otimes z^k$, for some integer $k$ 
(the same for all $T$, by the $S_n$-symmetry). On the other hand, by the $S_2$-symmetry,  $\cO(\de_{T^c})=\cO(2E_{T^c})\otimes z^{-k}$. Hence, we must have $k=0$. 

We now prove that $\cO(\de_{i0})=pr^*_i\cO(1)(-\sum_{i\notin T}E_T)\otimes z$. (Note that all other relations will then follow by $S_2$-symmetry and 
Lemma \ref{relations2}.) Clearly, $F^*\cO(\de_{i0})$ is the line bundle $\cO(\tilde\De_{i0})_{|W_{ss}}$, where $\tilde\De_{i0}$ is the 
proper transform in $W$ of the diagonal $\De_{i0}$ in $(\PP^1)^n$ defined by $x_i=0$, where $z_i=[x_i,y_i]$ now denote coordinates on 
$(\PP^1)^n$. As $\tilde\De_{i0}=\De_{i0}-\sum_{i\notin T}E_T$ (markings in $T^c$ are identified with $0$), it follows that 
$$\cO(\de_{i0})=pr^*_i\cO(1)\left(-\sum_{i\notin T}E_T\right)\otimes z^k,$$
for some integer $k$. The pull-back of the canonical section of the effective divisor $\de_{i0}$ (which is $x_i$) must be an invariant section.  
The section $x_i$ of $\cO_{\PP^1}(1)$ becomes the constant section $1$ in the open chart $U:~x_i\neq 0$. Considering a point 
$q=(q_1,\ldots, q_n)$ in $U$, with $q_i=\infty$ and $q_j\in\PP^1$ general for $j\neq i$, it follows that 
for the $1$-PS $\la(z)=z$ we have $\wt_{\la} pr_i^*\cO(1)_{|q}=-1$, $\wt_{\la} \cO\otimes z^k_{|q}=k$, hence, the constant section $1$ becomes 
$z^{-1+k}$ under the action of $\la$ and we must have $k=1$ for the section to be invariant. 
\ep

\begin{lemma}\label{restrict to delta}
Let $\de_T:=\de_{T\cup\{\infty\}}=\PP^s\times \PP^s$. We have 
$${\de_{i\infty}}_{|\de_T}=
\begin{cases}
\cO(1,0) & \text{ if }\quad i\in T\cr
\cO & \text{ if }\quad i\notin T\end{cases},\quad
{\de_{i0}}_{|\de_T}=
\begin{cases}
\cO(0,1) & \text{ if }\quad i\notin T\cr
\cO & \text{ if }\quad i\in T,\end{cases},$$
$${\psi_{\infty}}_{|\de_T}=\cO(-1,0),\quad {\psi_{0}}_{|\de_T}=\cO(0,-1),\quad \de_T{|\de_T}=\cO(-1,-1).$$
\end{lemma}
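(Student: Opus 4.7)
The plan is to carry out all computations on the $\GG_m$-equivariant line bundles on $W_n$ provided by Lemma~\ref{dictionary2} and then descend along the GIT quotient $\phi\colon(W_n)_{ss}\to\tilde\Si_n\cong Z_N$. I would first identify $\de_T$ with $\PP^s\times\PP^s$ by noting that $\phi$ restricts to the GIT quotient $E_T^{ss}/\GG_m$, where $E_T^{ss}=E_T\setminus(Z_T^+\cup Z_T^-)$; since $E_T\cong\PP^{n-1}$ has homogeneous coordinates $\{t_i\}_{i\in T^c}\cup\{u_j\}_{j\in T}$ with $\GG_m$-action $z\cdot[t:u]=[z^2t:z^{-2}u]$, this quotient is canonically $\PP^s\times\PP^s$. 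I would fix the convention that the first factor has coordinates $[u_j]_{j\in T}$ (parametrizing the bubble carrying $\infty$ and the $T$-markings) and the second has $[t_i]_{i\in T^c}$; this choice is compatible with the first formula of the lemma, since the locus on $\de_T$ where a marking $i\in T$ collides with $\infty$ is precisely the hyperplane $\{u_i=0\}$, matching $\cO(1,0)$.

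Next I would set up the descent dictionary between $\GG_m$-equivariant line bundles on $E_T^{ss}$ and line bundles on $\PP^s\times\PP^s$. A monomial $\prod_{j\in T}u_j^{a_j}\prod_{i\in T^c}t_i^{b_i}$ with $\sum a_j=a$ and $\sum b_i=b$ is a section of $\cO_{E_T}(a+b)$ whose $\GG_m$-weight is $2a-2b$, since on $V^*=H^0(\cO_{E_T}(1))$ each $u_j$ carries weight $+2$ and each $t_i$ carries weight $-2$, dually to the action on $V$. Hence $\cO(a,b)$ on $\PP^s\times\PP^s$ corresponds to $\cO_{E_T}(a+b)\otimes z^{2(b-a)}$; equivalently, $\cO_{E_T}(d)\otimes z^m$ descends (precisely when $m\equiv 2d\pmod{4}$) to $\cO(a,b)$ with $a=(d-m/2)/2$ and $b=(d+m/2)/2$.

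The substantive step is to restrict the expressions of Lemma~\ref{dictionary2} to $E_T$, using three elementary ingredients: (i) $\cO(E_{T'})|_{E_T}$ is the trivial equivariant bundle for $T'\neq T$, since $E_T$ and $E_{T'}$ are disjoint and the canonical section of $\cO(E_{T'})$ is $\GG_m$-invariant; (ii) $\cO(E_T)|_{E_T}=\cO_{E_T}(-1)\otimes z^0$, which one reads off from (\ref{B}) via the general formula $-2d+m$ for the $\GG_m$-weight of $\cO_{E_T}(d)\otimes z^m$ at $Z_T^+$, giving $+2$ at $(d,m)=(-1,0)$; and (iii) $pr_i^*\cO_{\PP^1}(1)|_{E_T}$ is trivial as a line bundle (since $E_T$ contracts to $p_T$) and carries $\GG_m$-character $z^{+1}$ if $i\in T^c$ and $z^{-1}$ if $i\in T$, read off from the weights of $\cO_{\PP^1}(1)$ at $0$ and $\infty$ under $z\cdot[x,y]=[zx,z^{-1}y]$. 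Substituting these into the formulas of Lemma~\ref{dictionary2} and applying the descent dictionary produces each of the seven identities in one line; for instance $\psi_\infty|_{E_T}=\cO_{E_T}(-1)\otimes z^2$ descends with $(d,m)=(-1,2)$ to $\cO(-1,0)$, and $\cO(\de_T)|_{E_T}=\cO_{E_T}(-2)\otimes z^0$ descends to $\cO(-1,-1)$.

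The only real obstacle is conventional bookkeeping: fixing the ordering of the two $\PP^s$ factors, and tracking the $\GG_m$-characters consistently -- both the $\pm 2$ weights coming from the action on the $t_i$ and $u_j$, and the $\pm 1$ characters coming from the restrictions of $pr_i^*\cO_{\PP^1}(1)$. Once the descent dictionary is in place and the $S_2$-symmetry exchanging $0\leftrightarrow\infty$ (equivalently $T\leftrightarrow T^c$) is used to cut the case analysis in half, what remains is a short arithmetic verification.
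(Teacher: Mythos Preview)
Your approach is correct and genuinely different from the paper's. The paper argues modularly: it identifies $\de_T=\M'\times\M''$ as a product of two Hassett spaces (each a $\PP^s$ via the Kapranov map $|\psi_x|$ at the attaching point $x$), observes that $\de_{i\infty}\cap\de_T=\emptyset$ when $i\notin T$, and for $i\in T$ uses Lemma~\ref{relations} on $\M'$ (whose universal family is a $\PP^1$-bundle) to get $\psi_\infty=-\psi_x$ and $\de_{i\infty}=-\psi_\infty=\cO(1)$. The self-intersection $\de_T|_{\de_T}$ is then obtained by restricting one of the identities of Lemma~\ref{relations2}. By contrast, you work entirely on the GIT side: you pull the formulas of Lemma~\ref{dictionary2} back to the exceptional divisor $E_T\subset W_n$, set up the descent dictionary $\cO_{E_T}(d)\otimes z^m\leftrightarrow\cO(a,b)$ for the quotient $E_T^{ss}\to\PP^s\times\PP^s$, and read off the answers. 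Your approach is more uniform (every case is the same computation) and makes the GIT interpretation of $\de_T$ explicit; the paper's approach is shorter and requires no equivariant bookkeeping, but mixes two techniques (modular identities for most cases, then Lemma~\ref{relations2} for the normal bundle). Since Lemma~\ref{dictionary2} is proved independently of this lemma, there is no circularity in your route.
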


\bp
By symmetry, it suffices to compute ${\de_{i\infty}}_{|\de_T}$ and ${\psi_{\infty}}_{|\de_T}$. 
Clearly, the intersection ${\de_{i\infty}}\cap \de_T=\emptyset$ if  $i\notin T$. 
We identify $\de_T=\M'\times \M''=\PP^s\times \PP^s$, 
where $\M'$, resp., $\M''$ are Hassett spaces with weights 
$(\frac{1}{2}+\eta, \frac{1}{n},\ldots, \frac{1}{n}, 1)$, with the attaching point $x$ having weight $1$. 
We identify $\M'=\PP^s$ via the isomorphism $|\psi_x|: \M'\ra\PP^s$. 
We have ${\de_{i\infty}}_{|\de_T}=\de_{i\infty}\otimes\cO$, ${\psi_{\infty}}_{|\de_T}=\psi_{\infty}\otimes\cO$. 
By Lemma \ref{relations}, on $\M'$ we have $\psi_{\infty}+\psi_x=0$ since $\de_{x\infty}=0$,
and $\de_{i\infty}=-\psi_{\infty}=\cO(1)$ if $i\in T$. 
The identity $\de_T{|\de_T}=\cO(-1,-1)$ follows now from the previous ones by restricting to $\de_T$ any of the identities in Lemma \ref{relations2}. 
\ep

\subsection{Exceptionality}
Note that $W_n$ is a polarized toric variety with the polytope $\Delta$
obtained by truncating the $n$-dimensional cube at vertices lying on the hyperplane $H$ normal to and bisecting the big diagonal.
Then $\tilde\Si_n$ is a smooth polarized projective  toric variety for the torus $\bG_m^{n-1}$ and  its polytope is $\Delta\cap H$.
In particular, the topological Euler characteristic $e(\tilde\Si_n)$ is equal to the number of 
edges $\Delta$ intersecting $H$:
$$e(\tilde Z_{n})=(s+1)^2{n\choose s+1}
=s^2{n\choose s+1}+(n-1){n\choose s+1}\quad (n=2s+2).
$$
Note that 
$(s+1){n\choose s+1}=n{n\choose 0}+(n-2){n\choose 1}+(n-4){n\choose 2}+\ldots+2{n\choose s}$. 

\begin{defn}\label{even translate}
For $E\subseteq N$, $e=|E|$, $p\in\ZZ$ such that $p+e$ is even, let 
$$L_{E,p}:=\cO(-E)\left(\sum_{T\subseteq N, |T|=s+1}\al_{T,E,p} E_T\right)\otimes z^p\quad \text{where}$$
\begin{equation}\label{The alpha}
\al_{T,E,p}:=-|x_{T,E,p}|,\quad x_{T,E,p}:=|E\cap T|-\frac{e-p}{2}
\end{equation}
i.e., the descent to $\tilde \Si_n$ of the restriction to $(W_n)_{ss}$ of
the above $\GG_m$-linearized line bundle on $W_n$. By Lemma \ref{dictionary2} we recover Definition \ref{L even case}:
\begin{equation}\label{again}
L_{E,p}=-\left(\frac{e-p}{2}\right)\psi_{\infty}-\sum_{i\in E}\de_{i\infty}-\sum_{x_{T,E,p}>0} x_{T,E,p}\de_{T\cup\{\infty\}}.
\end{equation}
We write $x_T$ if there is no ambiguity. Note that $x_{T,E,p}=-x_{T^c,E,-p}$.

\end{defn}

\begin{lemma}
The action of $S_2$ on $Z_N$ exchanges $L_{E,p}$ with $L_{E,-p}$.
\end{lemma}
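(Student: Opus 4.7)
The plan is to track how the $S_2$-action transforms each factor in the $\GG_m$-linearized line bundle on $W_n$ that descends to $L_{E,p}$, and to exploit the symmetry relation $x_{T,E,p}=-x_{T^c,E,-p}$ already recorded at the end of Definition \ref{even translate}.

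First I would recall how $S_2$ acts on the relevant building blocks. The nontrivial element acts on each $\PP^1$ factor by $[x:y]\mapsto[y:x]$, which fixes the isomorphism class of $\cO(-1)$ (it is the standard involution of $\PP^1$). Consequently $\cO(-E)=\pi^*\cO(\bar\jmath)$, depending only on the unordered pair $\{0,\infty\}$ of distinguished points and on the subset $E\subseteq N$, is $S_2$-invariant as an unlinearized line bundle. On the $\GG_m$-linearization factor, $S_2$ normalizes $\GG_m$ via $z\mapsto z^{-1}$, so it sends $\cO\otimes z^p$ to $\cO\otimes z^{-p}$. On the exceptional divisors, $S_2$ swaps the strictly semistable point $p_T$ with $p_{T^c}$, and so sends $E_T$ to $E_{T^c}$ (this is stated in Section \ref{even intro}).

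Combining these, applying $S_2$ to the representative $\cO(-E)\bigl(\sum_T\al_{T,E,p}E_T\bigr)\otimes z^p$ yields
\[
\cO(-E)\Bigl(\sum_T\al_{T,E,p}E_{T^c}\Bigr)\otimes z^{-p}=\cO(-E)\Bigl(\sum_T\al_{T^c,E,p}E_T\Bigr)\otimes z^{-p},
\]
after reindexing the sum by $T\leftrightarrow T^c$ (which is a bijection on the set of $(s+1)$-subsets of $N$). To finish, I would check that $\al_{T^c,E,p}=\al_{T,E,-p}$. By the identity $x_{T^c,E,p}=-x_{T,E,-p}$ noted in Definition \ref{even translate} (which is a two-line verification from $|E\cap T^c|=e-|E\cap T|$), taking absolute values gives $|x_{T^c,E,p}|=|x_{T,E,-p}|$, hence $\al_{T^c,E,p}=-|x_{T^c,E,p}|=-|x_{T,E,-p}|=\al_{T,E,-p}$, as required. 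Matching this against the formula for $L_{E,-p}$ completes the proof.

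There is no serious obstacle here; the only point to be careful about is the bookkeeping of signs and the reindexing $T\leftrightarrow T^c$ in the sum over exceptional divisors, together with confirming that the unlinearized line bundle $\cO(-E)$ is genuinely $S_2$-fixed (as opposed to only being sent to an isomorphic bundle with a different linearization).
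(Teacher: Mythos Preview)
Your proof is correct but takes a different route from the paper. The paper's one-line argument works directly on $Z_N$: it applies the $S_2$-swap $0\leftrightarrow\infty$ to the tautological-class expression (\ref{again}) for $L_{E,p}$ (sending $\psi_\infty$, $\de_{i\infty}$, $\de_{T\cup\{\infty\}}$ to $\psi_0$, $\de_{i0}$, $\de_{T^c\cup\{\infty\}}$) and then converts back to the $\infty$-side using the relations of Lemma \ref{relations2} to recognize the result as $L_{E,-p}$. You instead work upstairs on $W_n$ with the $\GG_m$-linearized representative from Definition \ref{even translate}, invoking the facts already recorded in Section \ref{even intro} about how $S_2$ acts on the building blocks ($\cO(-E)$ fixed as a linearized bundle, $z^p\mapsto z^{-p}$, $E_T\leftrightarrow E_{T^c}$) together with the identity $x_{T^c,E,p}=-x_{T,E,-p}$. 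Your approach is arguably cleaner since it avoids the linear algebra in the $\psi$- and $\de$-classes and reduces everything to a reindexing; the paper's approach has the virtue of staying entirely on $Z_N$ and making explicit why Lemma \ref{relations2} is relevant. Both are short and valid.
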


\bp
The statement follows immediately from (\ref{again}) and Lemma \ref{relations2}.  
\ep

\bp[Proof of Theorem \ref{odd} - exceptionality]
Lemma \ref{Torsion} implies that the torsion sheaves $\cO_{\de}(-a,-b)$ form an exceptional collection.
Let now $\de:=\de_{T\cup\{\infty\}}$. 
To prove that $\{\cO_{\de}(-a,-b), L_{E,p}\}$ form an exceptional pair, i.e., 
that $L^\vee_{|\de}\otimes\cO(-a,-b)$ is acylic, note that 
by Lemma \ref{restrict to delta} and (\ref{even translate}) we have, letting $\al_T:=\al_{T,E,p}$:
$$L^\vee_{|\de}=\begin{cases}
\cO(0,\al_T) & \text{ if }\quad p+|E\cap T|-|E\cap T^c|\geq0\cr
\cO(\al_T,0) & \text{ if }\quad p+|E\cap T|-|E\cap T^c|\leq0,\end{cases}.$$
Clearly, if  $a,b>0$ then $L^\vee_{|\de}\otimes\cO(-a,-b)$ is acylic. Consider now the case when one of $a,b$ is $0$. 
Using the $S_2$-symmetry, we may assume $a=0$. Let $0<b<\frac{s+1}{2}$.
Since by (\ref{alpha ineq}) we have $-\lfloor\frac{s+1}{2}\rfloor\leq \al_T\leq0$, the result follows. 

\smallskip

We describe the Kempf-Ness stratification of the unstable locus in $W_n$.
Let $G=\GG_m$. As before, we consider $(\la, Z)$, with 
a $1$-PS $\la: \GG_m\ra G$ and $Z$ a connected component of the $\la$-fixed locus. 
It suffices to consider $\la(z)=z$ and $\la'(z)=z^{-1}$. 
The $G$-fixed locus in $W=W_n$ consists of the points 
$$Z_I=\{(x_i)\,|\,x_i=\infty\ \hbox{\rm for}\ i\in I,\
x_i=0\ \hbox{\rm for}\ i\not\in I,\}\in(\PP^1)^n\setminus\{p_T\}$$
for every subset $I\subseteq N$ with $|I|\neq s+1$ and the loci
$Z^+_T\sqcup Z^-_T\subseteq E_T$, for each subset $T\subseteq N$, $|T|=s+1$. The pairs 
$(\la, Z)$ to be considered are therefore 
$$(\la, Z_I),\quad (\la', Z_I)\quad (I\subseteq N, |I|\neq s+1),$$
$$(\la, Z^+_T),\quad (\la', Z^+_T),\quad (\la, Z^-_T),\quad (\la', Z^-_T)\quad (T\subseteq N, |T|=s+1).$$
Recall that our polarization  is $\cL=\cO(1,\ldots,1)(-\epsilon\sum E_T)$ and 
for any subset $I\subseteq N$ with $|I|\neq s+1$ we have
$$\hbox{\rm weight}_\lambda\cL|_{Z_I}=|I^c|-|I|, \quad \hbox{\rm weight}_{\lambda'}\cL|_{Z_I}=-|I^c|+|I|,$$
while for all subsets $T\subseteq N$ with $|T|=s+1$ we have:
$$\hbox{\rm weight}_\lambda\cL|_{q}=-2\epsilon,  \quad \hbox{\rm weight}_{\lambda'}\cL|_{q}=+2\epsilon\quad (q\in Z^+_T),$$
$$\hbox{\rm weight}_\lambda\cL|_{q}=+2\epsilon,  \quad \hbox{\rm weight}_{\lambda'}\cL|_{q}=-2\epsilon\quad (q\in Z^-_T).$$
As in the $n$ odd case, we define for any subset $I\subseteq N$ affine subsets: 
$$S_I=\{(x_i)\,|\,x_i=\infty\ \hbox{\rm  if}\ i\in I, x_i\neq\infty\ \hbox{\rm  if}\ i\notin I\}\cong \AA^{|I^c|}$$
$$S'_I=\{(x_i)\,|\,x_i=0\ \hbox{\rm if}\ i\not\in I,  x_i\neq0\ \hbox{\rm  if}\ i\in I\}\cong \AA^{|I|}.$$
The unstable locus arises from the pairs with negative weight:
$$(\la, Z_I)\quad (\text{for } |I|>s+1),\quad (\la', Z_I)\quad (\text{for } |I|<s+1),$$
$$(\la, Z^+_T),\quad (\la', Z^-_T)\quad (\text{for } |T|=s+1):$$ 
$$S_I\cong \AA^{|I^c|}\quad (\text{for }  |I|>r),\quad S'_I\cong \AA^{|I|}\quad (\text{for }  |I|<s+1),$$
$$S^+_T=\Bl_{p_T}S_T=\Bl_0\AA^{|T^c|},\quad S^-_T=\Bl_{p_T}S'_T=\Bl_0\AA^{|T|}\quad (\text{for } |T|=s+1).$$

The destabilizing $1$-PS for $S_I$ (resp.,~for $S'_I$) is $\lambda$ 
(resp.~$\lambda'$). The $1$-PS $\lambda$ (resp., $\lambda'$) acts on the restriction to $Z_I$ of the conormal bundle 
$N^\vee_{S_I|(\PP^1)^n}$ (resp., $N^\vee_{S'_I|(\PP^1)^n}$) with positive weights. Their sum $\eta_I$ (resp., $\eta'_I$) is:
$$\eta_I=2|I|,\quad\hbox{\rm resp.,}\quad \eta'_I=2|I^c|.$$

When $|T|=s+1$, the destabilizing $1$-PS for $S^+_T$ (resp.~for $S^-_T$) is $\lambda$ 
(resp.~$\lambda'$). The $1$-PS $\lambda$ (resp., $\lambda'$) acts on 
$N^\vee_{S^+_T|W}$ (resp., $N^\vee_{S^-_T|W}$) restricted to $q\in Z^+_T$ (resp., $Z^-_T$ ), with positive weights.  
Their sum $\eta^+_T$ (resp., $\eta^-_T$) is:
$$\eta^+_T=4|T|=2n,\quad\hbox{\rm resp.,}\quad \eta^-_T=4|T^c|=2n.$$
To see this, let $q\in Z^+_T$. The sum of $\lambda$-weights of $\big(N^\vee_{S^+_T|W}\big)_{|q}$  equals 
$$\hbox{\rm weight}_\lambda \big(\det N^\vee_{S^+_T|W}\big)_{|q}=\hbox{\rm weight}_\lambda \big(\det T_{S^+_T}\big)_{|q}-\hbox{\rm weight}_\lambda \big(\det T_W\big)_{|q}.$$
We use the local coordinates introduced in \ref{even intro} (assume again w.l.o.g. that $T=\{s+2,\ldots, n\}$). 
We may assume also that the point 
$$q=[t_1,\ldots, t_{s+1},0\ldots,0]\in Z^+_T\subseteq E_T=\PP^{n-1}$$ has $t_1=1$. 
Then local coordinates on an open set $U=\AA^n\subseteq W$ around $q$ are given by
$x_1, t_2,\ldots, t_{s+1}$, $u_1,\ldots, u_{s+1},$
with the blow-up map  $\AA^n\ra \AA^n$:
$$(x_1, t_2,\ldots, t_{s+1}, u_1,\ldots, u_{s+1})\mapsto (x_1, x_1t_2,\ldots, x_1 t_{s+1},  x_1u_1,\ldots, x_1 u_{s+1}).$$
Then $S^+_T\cap U\subseteq U$ has equations $u_1=\ldots=u_{s+1}$ (the proper transform of 
$S_T: y_1=\ldots=y_{s+1}$). The action of $G$ on $W$ induces an action on $U$:
$$z\cdot x_1=z^2 x_1,\quad z\cdot t_i=t_i\quad (i=2,\ldots,s+1), z\cdot t_i=z^{-4}t_i \quad (i=1,\ldots,s+1).$$
It follows that 
$$\hbox{\rm weight}_\lambda \big(\det T_{W}\big)_{|q}=-2-4s,\quad \hbox{\rm weight}_\lambda \big(\det T_{S^+_T}\big)_{|q}=2.$$
Hence, $\eta_T=4s+4=2n$. Similarly, $\eta'_T=2n$: for $q\in Z^-_T$ and
coordinates $y_1, t_1,\ldots, t_{s+1}, u_2,\ldots, u_{s+1}$ on the chart $u_1=1$, the action of $G$ given by: 
$$z\cdot y_1=z^{-2} y_1,\quad z\cdot t_i=z^4t_i\quad (i=1,\ldots,s+1), z\cdot u_i=u_i \quad (i=2,\ldots,s+1).$$ 
Letting $m:=\Bigl\lfloor\frac{n}{4}\Bigr\rfloor=\Bigl\lfloor\frac{s+1}{2}\Bigr\rfloor$, we make a choice of windows $\GG_w$:
$$[w_I, w_I+\eta_I),\quad  [w'_I, w'_I+\eta'_I),\quad [w^+_T, w^+_T+\eta^+_T),\quad  [w^-_T, w^-_T+\eta^-_T),$$ 
$$w_I=w'_I=-(s+1),\quad w^+_T=w^-_T=-4m=-n\quad \text{ if}\quad s\quad \text{is odd},$$
$$w_I=w'_I=-s,\quad w^+_T=w^-_T=-4m=-n+2\quad \text{ if}\quad s\quad \text{is even}.$$

We prove that $\GG_w$ contains the $G$-linearized line bundles that descend to the 
$L_{E,p}$ in Theorem \ref{even}. Since the collection is $S_2$ invariant and $S_2$ flips the strata $S_I$ and $S'_I$,
it suffices to check the window conditions for the strata $S_I$,  $S^+_T$.  
For $I\subseteq N$, $|I|>s+1$, at the point $Z_I\in S_I$ we have by (\ref{A})
$$\hbox{\rm weight}_\lambda\big(L_{E,p}\big)_{|Z_I}=|E\cap I|-|E\cap I^c|+p,$$
which lies in $[w_I, w_I+\eta_I)$ by Lemma \ref{MaxMin}. 

For $T\subseteq N$ with $|T|=s+1$, we have by   (\ref{B}) and  (\ref{A}) that
\begin{align*}
\hbox{\rm weight}_\lambda\big(L_{E,p}\big)_{|q\in Z^+_T}&=|E\cap T|-|E\cap T^c|+p-2|x_T|\\
&=\begin{cases}
0 & \text{ if }\quad |E\cap T|-|E\cap T^c|+p\geq0\cr
-4|x_T| & \text{ if }\quad |E\cap T|-|E\cap T^c|+p\leq0,\end{cases}
\end{align*}
which by (\ref{alpha ineq}) lies in $[w^+_T, w^+_T+\eta^+_T)$.
Hence, all $\{L_{E,p}\}$ in Theorem \ref{even} are contained in the window $\GG_w$. 

We now check exceptionality. Consider two line bundles as in Theorem \ref{even}:
$$L_{E,p}=\cO(-E)\left(\sum_{|T|=r}\al_T E_T\right)\otimes z^p,\quad L_{E',p'}=\cO(-E')\left(\sum_{|T|=r}\al'_T E_T\right)\otimes z^{p'}.$$  
where $\al_T:=\al_{T,E,p}$, $\al'_T:=\al_{T,E',p'}$.  Assume that $e=|E|\geq e'=|E'|$. Hence, $E\nsubseteq E'$ unless $E=E'$. 
By the main result of \cite[Theorem 2.10]{DHL}, we have that $R\Hom(L_{E', p'}, L_{E,p})$ equals the 
weight $(p'-p)$ part (with respect to the canonical action of $G$) of 
$$R\Hom_{W}(L_{E',p'},L_{E,p})=R\Ga\left(\cO(E'-E)\otimes\cO(\sum_{T}(\al'_T-\al_T)E_T)\right).$$ 
Hence, letting
$$M_0:=\cO(E'-E)\otimes\cO\left(\sum_{\be_T\leq0}(-\be_T )E_T\right),\quad \text{where}\quad \be_T:=\al_T-\al'_T,$$
we need to understand the weight $(p'-p)$ part of 
$$R\Ga\left(M_0\otimes\cO\left(\sum_{|T|=r,\be_T>0}(-\be_T )E_T\right)\right).$$
Note that $M_0$ is a pull-back from $(\PP^1)^n$; hence, 
by the projection formula, $R\Ga(M_0)=R\Ga(\cO(E'-E))$ (which is $0$ if $E\nsubseteq E'$). 

Consider a simplified situation. For a line bundle $M$ on $W$,  $G:=E_T$, $\be:=\be_T>0$ consider the exact sequences:
$$0\ra M(-(i+1) G)\ra M(-iG)\ra M(-iG)_{|G}\ra 0,\quad (i=0,\ldots \be-1).$$
To prove that the weight $(p'-p)$ of  $R\Ga(M(-\be G))$ is $0$, it suffices to prove that
$$R\Ga(M),\quad R\Ga(M(-iG)_{|G})\quad (i=0,1,\ldots,\be-1),$$ 
have no weight $(p'-p)$ part. Put an arbitrary order on the subsets $T$ with $\be_T>0$ 
($T_1,T_2,\ldots$).  
Applying the above observation successively, first for $M_0$, $E_{T_1}$, then inductively 
for $M_0(-\be_1T_1-\ldots-\be_iT_i )$, $E_{T_{i+1}}$, it suffices to prove that for all $T$, the following spaces  
$$R\Ga(M_0),\quad R\Ga(M_0(-iE_T)_{|E_T})\quad (i=0,1,\ldots,\be-1)$$ 
have no weight $(p'-p)$ part. 

We start with $R\Ga(M_0)$. 
If $E\neq E'$, then $R\Ga^*(M_0)=0$. If $E=E'$, then $M_0=\cO$ and the action of $G$ on $R\Ga(M_0)$ is trivial. Hence,
unless $p=p'$ (i.e., $L_{E,p}=L_{E',p}$), $R\Ga(M_0)$ has no weight $(p'-p)$ part. 

We now continue with $R\Ga(M_0(-iE_T)_{|E_T})$. By the projection formula, 
$$R\Ga(M_0(-iE_T)_{|E_T})={M_0}_{|p_T}\otimes R\Ga(\cO(-iE_T)_{|E_T}),$$
where ${M_0}_{|p_T}$ is the fiber of $M_0$ at $p_T$ (we denote $M_0$ both the line bundle on $(\PP^1)^n$ 
and its pull back to $W$). By (\ref{A}), the action of $G$ on ${M_0}_{|p_T}$ has weight 
$$\big(|E\cap T|-|E\cap T^c|\big)-\big(|E'\cap T|-|E'\cap T^c|\big).$$
Consider coordinates $t_i, u_i$ on $E=\PP^{n-1}$, such that $t_i$ (resp., $u_i$) have weight $2$ (resp., weight $-2$). 
There is a canonical identification 
$$R\Ga(\cO(-iE_T)_{|E_T})=\CC\left\{\prod t_k^{a_k}\prod u_k^{b_k}\quad |\quad a_k,b_k\in\ZZ_{\geq0},\quad \sum a_k+\sum b_k=i\right\},$$ 
with the weight of $\prod t_k^{a_k}\prod u_k^{b_k}$ equal to $2\sum a_k-2\sum b_k$. As $2\sum a_k-2\sum b_k$ ranges through all even numbers between
$-2i$ and $2i$, it follows that the possible weights of elements in $R\Ga(M_0(-iE_T)_{|E_T})$ are 
$$\big(|E\cap T|-|E\cap T^c|\big)-\big(|E'\cap T|-|E'\cap T^c|\big)+2j,$$
for all the values of $j$ between $-i$ and $i$.  

Assume now that for some $0\leq i\leq \be_T-1=\al_T-\al'_T-1$, $-i\leq j\leq i$,
$$\big(|E\cap T|-|E\cap T^c|\big)-\big(|E'\cap T|-|E'\cap T^c|\big)+2j=p'-p.$$
Using the definition of $\al_T$, $\al_{T'}$, it follows that $\pm 2\al_T \pm 2\al'_T=-2j$. 
\begin{claim}
None of $\pm \al_T \pm \al'_T$ lies in the interval 
$[-(\al_T-\al'_T-1), (\al_T-\al'_T-1)]$. 
\end{claim}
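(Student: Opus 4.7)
The plan is to unpack the definition of $\al_T,\al'_T$ from \eqref{The alpha} and verify each of the four sign choices in turn. Writing $\al_T=-|x_T|$ and $\al'_T=-|x'_T|$ (both non-positive integers), the hypothesis $\be_T=\al_T-\al'_T>0$ translates to $|x'_T|>|x_T|\ge0$, and gives the identity $\be_T=|x'_T|-|x_T|$. Since the assertion concerns the integer interval $[-(\be_T-1),\be_T-1]$, it is enough to show that each of the four values $\pm\al_T\pm\al'_T$ has absolute value at least $\be_T$.

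First I would dispatch the two ``opposite-sign'' combinations $\al_T-\al'_T$ and $-\al_T+\al'_T$: these equal $\pm\be_T$, so their absolute value is exactly $\be_T$ and they trivially lie outside the forbidden interval.

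Next I would handle the two ``same-sign'' combinations $\al_T+\al'_T$ and $-(\al_T+\al'_T)$. Since $\al_T,\al'_T\le0$, each has absolute value $|x_T|+|x'_T|$, and using the identity $\be_T=|x'_T|-|x_T|$ one gets
\[
|x_T|+|x'_T|-\be_T=2|x_T|\ge0,
\]
so these absolute values are again $\ge\be_T$, which finishes the claim.

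The main ``obstacle'' is really none: the argument is entirely bookkeeping. The one point worth noting is that the sign convention in \eqref{The alpha} (namely $\al_T=-|x_T|\le0$) is exactly what forces the same-sign combinations to pass through the trivial inequality $2|x_T|\ge0$; with any other sign convention one would need a substantive inequality between $|x_T|$ and $|x'_T|$.
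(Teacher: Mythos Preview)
Your proof is correct and essentially the same as the paper's: both reduce to the two facts $\al_T,\al'_T\le 0$ and $\al_T-\al'_T=\be_T>0$, and the verification is pure bookkeeping. The paper phrases it by using the symmetry of the interval to check only the half $[0,\be_T-1]$, whereas you compute absolute values directly via the substitution $\al_T=-|x_T|$, $\al'_T=-|x'_T|$; the arithmetic content is identical.
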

\bp
By symmetry, it is enough to prove that none of $\pm \al_T \pm \al'_T$ lies in the interval 
$[0, (\al_T-\al'_T-1)]$. As $\al_T, \al'_T\leq0$ and $\al_T>\al'_T$. 
Hence, it remains to prove that $-\al_T-\al'_T$, $\al_T-\al'_T$ do not lie in the interval
$[0, (\al_T-\al'_T-1)]$. But clearly, $-\al_T-\al'_T>\al_T-\al'_T-1$ and $\al_T-\al'_T>\al_T-\al'_T-1$. 
\ep
This finishes the proof that the collection in Theorem \ref{even} is exceptional. 
\ep

\begin{lemma}\label{Torsion}
Let $0\leq a, b\leq s$. Let $\de$ be a divisor in a Hassett space $\M$ such that 
$\de=\PP^s\times\PP^s$ and with normal bundle $\cO(-1,-1)$. Assume that 
the restriction map $\Pic(\M)\ra\Pic(\de)$ is surjective. 
Then 
$\{\cO_{\de}(-a,-b), \cO_{\de}(-a',-b')\}$
is not an exceptional collection if and only if one of the following happens:
\bi
\item $a'\geq a$, $b'\geq b$,
\item $a'=0$, $a=s$, $b'>b$, 
\item $b'=0$, $b=s$, $a'>a$, 
\item $a'=b'=0$, $a=b=s$.
\ei
When $a=a'$, $b=b'$, we have $R\Hom(\cO_{\de}(-a',-b'), \cO_{\de}(-a,-b))=\CC$. 
\end{lemma}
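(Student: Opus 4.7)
The plan is to reduce everything to cohomology computations on $\de=\PP^s\times\PP^s$ via a two-term locally free resolution of $\cO_\de(-a',-b')$ on $\M$. Using the assumption that $\Pic(\M)\to\Pic(\de)$ is surjective, first pick a line bundle $\cL$ on $\M$ with $\cL|_\de=\cO(-a',-b')$ and tensor the ideal sequence of $\de$ with $\cL$ to obtain
$$0\to\cL(-\de)\to\cL\to\cO_\de(-a',-b')\to 0.$$
Applying $\RHom_\M(-,\cO_\de(-a,-b))$, each outer term reduces via adjunction to cohomology on $\de$: using $\RHom_\M(\cN,\cO_\de\otimes F)=R\Ga(\de,\cN^\vee|_\de\otimes F)$ for any line bundle $\cN$ on $\M$, and the identity $\cO_\M(-\de)|_\de=\cO(1,1)$ dual to the normal bundle $\cO(-1,-1)$, one obtains $R\Ga(\de,\cO(a'-a,b'-b))$ and $R\Ga(\de,\cO(a'-a-1,b'-b-1))$ respectively.

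The decisive point will be that the connecting map between these two cohomology complexes vanishes. Indeed, it is induced by the inclusion $\cL(-\de)\hookrightarrow\cL$, that is, multiplication by the canonical section of $\cO_\M(\de)$; this section vanishes identically on $\de$, so the map is zero after restriction. Consequently the distinguished triangle splits and
$$\RHom_\M(\cO_\de(-a',-b'),\cO_\de(-a,-b))=R\Ga(\de,\cO(a'-a,b'-b))\oplus R\Ga(\de,\cO(a'-a-1,b'-b-1))[-1].$$
This is the one conceptual step; everything else is K\"unneth bookkeeping.

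The final step will be to enumerate when each summand is nonzero. By K\"unneth and the standard vanishing $H^*(\PP^s,\cO(k))=0$ for $-s\le k\le -1$, the first summand is nonzero precisely when $a'\ge a$ and $b'\ge b$, which is case~(i). The second summand requires each shifted coordinate $a'-a-1$ and $b'-b-1$ to lie outside $[-s,-1]$; since $0\le a,a',b,b'\le s$, this means either $a'>a$ or the Serre-dual boundary $(a',a)=(0,s)$, and likewise for the pair $(b',b)$. Expanding the four cross-products absorbs the generic case $a'>a,\ b'>b$ into (i) and yields precisely (ii), (iii), (iv). When $(a,b)=(a',b')$ the two summands become $R\Ga(\cO(0,0))=\CC$ and $R\Ga(\cO(-1,-1))[-1]=0$, giving $\RHom=\CC$.
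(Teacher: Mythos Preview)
Your proof is correct and follows essentially the same approach as the paper: both resolve $\cO_\de(-a',-b')$ by the ideal sheaf sequence of $\de$ (twisted by a line bundle extending $\cO(-a',-b')$), apply $R\Hom(-,\cO_\de(-a,-b))$, and reduce to K\"unneth computations on $\PP^s\times\PP^s$ of $\cO(a'-a,b'-b)$ and $\cO(a'-a-1,b'-b-1)$. Your observation that the connecting map vanishes (being multiplication by $s_\de$, which restricts to zero on $\de$) is a clean refinement: the paper works with the long exact sequence without recording this splitting, leaving the ``if'' direction somewhat implicit, whereas your direct-sum decomposition makes both directions of the equivalence transparent.
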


\bp
As any line bundle on $\de$ is the restriction of a line bundle on $\M$, we have that 
$$R\Hom(\cO_{\de}(-a',-b'), \cO_{\de}(-a,-b))=R\Hom(\cO_{\de}, \cO_{\de}(a'-a,b'-b)).$$
Applying $R\Hom(-, \cO_{\de}(a'-a,b'-b))$ to the canonical sequence 
$$0\ra\cO(-\de)\ra\cO\ra\cO_{\de}\ra0,$$
it follows that there is a long exact sequence on $\M$
$$\ldots\ra\Ext^i(\cO_{\de}, \cO_{\de}(a'-a,b'-b))\ra$$
$$\ra\HH^i(\cO_{\de}(a'-a,b'-b))\ra \HH^i(\cO_{\de}(a'-a-1,b'-b-1))\ra\ldots$$
It is clear now that if any of the conditions in the Lemma hold, then 
$$R\Hom(\cO_{\de}(-a',-b'), \cO_{\de}(-a,-b))\neq0.$$
Assume now that none of the conditions holds.  Then either $a'<a$ or $b'<b$. Assume $a'<a$. Since $a'-a\geq -a\geq -s$, 
$\cO_{\de}(a'-a,b'-b)$ is acyclic. But in this case $\cO_{\de}(a'-a-1,b'-b-1)$ is not acyclic if and only if $a'=0$, $a=s$ and either
$b'-b>0$ or $b'-b\leq -s$ (in which case, we must have $b'=0$, $b=s$). This gives precisely two of the listed cases. The case   
$b'<b$ is similar. 
\ep

\begin{lemma}\label{MaxMin}
Let $n=2s+2$. For a fixed set $I\subseteq N$ with $|I|>s+1$, we have
$$\max_{(E,p)}\big(|E\cap I|-|E\cap I^c|+p\big)=
\begin{cases}
2|I| -(s+3) & \text{ if}\quad s\quad \text{is odd} \cr
2|I| -(s+2) & \text{ if}\quad  s\quad \text{is even},\end{cases}$$
$$\min_{(E,p)}\big(|E\cap I|-|E\cap I^c|+p\big)=
\begin{cases}
-(s+1) & \text{ if }\quad  s\quad \text{is odd}\cr
-s & \text{ if }\quad s\quad \text{is even},\end{cases}$$
where the maximum and the minimum are taken over all the pairs $(E,p)$ corresponding to each line bundle $L_{E,p}$ in Theorem \ref{even}. 
Similarly, for $T\subseteq N$, $|T|=s+1$
$$\max_{(E,p)}\big(p+|E\cap T|-|E\cap T^c|\big)=2m,\quad\min_{(E,p)}\big(p+|E\cap T|-|E\cap T^c|\big)=-2m,$$
$$\text{where}\quad m:=\Bigl\lfloor\frac{n}{4}\Bigr\rfloor=\Bigl\lfloor\frac{s+1}{2}\Bigr\rfloor.$$
In particular, when $(E,p)$ are as in Theorem \ref{even}, 
the coefficients $\al_{T,E,p}$ in (\ref{The alpha}) satisfy 
\begin{equation}\label{alpha ineq}
-m\leq \al_{T,E,p}=-|x_{T,E,p}|\leq 0
\end{equation}
\end{lemma}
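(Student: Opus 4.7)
The idea is to introduce the auxiliary variables $e=|E|$ and $a = |E\cap I|$ (or $a=|E\cap T|$ in the second case), rewrite the quantity to be bounded as a simple affine function of $(a,e,p)$, and then optimize over the polyhedron cut out by the window/parity constraints. Concretely, I would set
\[
f(E,p) = |E\cap I|-|E\cap I^c|+p = 2a-e+p,
\]
subject to $\max(0,e-|I^c|)\le a\le\min(e,|I|)$, $|p|+\min(e,n+1-e)\le s+1$, and $e+p$ even; and similarly $g(E,p)=p+|E\cap T|-|E\cap T^c|=p+2a-e$ for the second pair of formulas, with $a$ now constrained by $|T|=s+1$.

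For the maximum of $f$, I would split the optimization into $e\le s+1$ (so $|p|\le s+1-e$) and $e\ge s+2$ (so $|p|\le e-s-2$); since $|I|>s+1$, in the first range $a\le e$ is binding and one easily finds $f\le s+1$, while in the second range, with $|I^c|\le s$, the binding choice $a=\min(e,|I|)$ leads to $f\le 2|I|-s-2$. Dropping $p$ by $1$ when the parity $e+p$ even fails then produces the two stated values $2|I|-(s+3)$ (when $s$ is odd) and $2|I|-(s+2)$ (when $s$ is even); the witness pair is $e=|I|$, $a=|I|$ with $p=|I|-s-2$ or $p=|I|-s-3$ according to parity. The minimum of $f$ is found by the same two-case analysis with $a=\max(0,e-|I^c|)$; because $|I^c|\le s$, the optimum sits at $e=0$, $a=0$, $p=-(s+1)$ or $p=-s$, giving $-(s+1)$ or $-s$ as required.

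For the second group of formulas, the same method applied to $g=p+2a-e$ with $|T|=s+1$ gives $\max g = s+1$ when $s$ is odd (attained at $e=0$, $a=0$, $p=s+1$) and $\max g = s$ when $s$ is even (attained at $e=0$, $a=0$, $p=s$); these two values are exactly $2m$ with $m=\lfloor (s+1)/2\rfloor$. The minimum is $-2m$ by the $S_2$-symmetry $(E,p)\mapsto(E,-p)$, which preserves the window conditions and sends $g$ to $-g$. Finally, the bound \eqref{alpha ineq} is immediate from the $T$-computation: since $x_{T,E,p}=(|E\cap T|-|E\cap T^c|+p)/2 = g/2$, we get $|x_{T,E,p}|\le m$, hence $-m\le \alpha_{T,E,p} = -|x_{T,E,p}|\le 0$. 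The main (though entirely routine) obstacle is simply bookkeeping: carefully tracking the parity constraint $e+p\equiv 0\pmod 2$ so as to identify, in each parity of $s$, the correct extremal pair and to confirm that dropping $|p|$ by exactly one unit is the right adjustment.
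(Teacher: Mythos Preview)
Your approach is correct and is exactly the kind of direct optimization the authors have in mind: in the paper the proof is simply declared ``straightforward'' and omitted, so there is nothing to compare against beyond confirming that your argument goes through. One small caveat on your witnesses for the maximum of $f$: when $s$ is odd and $|I|=s+2$, the choice $e=|I|$ forces $|p|\le 0$ and then the parity constraint $e+p$ even rules out $p=0$, so your stated witness $p=|I|-s-3=-1$ is not admissible; in that boundary case the value $2|I|-(s+3)=s+1$ is instead attained in the first range (e.g.\ $e=0$, $p=s+1$), which you already bounded by $s+1$. This is precisely the bookkeeping you flag at the end, and once it is handled your argument is complete.
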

The proof is straightforward and we omit it. 

\subsection{Fullness}
Let $\cC$ be the collection in  Theorem \ref{even}. We denote by $\cA\subset \cC$ the collection of torsion sheaves in Theorem \ref{even}. 
We prove more generally: 
\begin{thm}\label{full even}
The collection $\cC$ in Theorem \ref{even} generates all line bundles $\{L_{E,p}\}$ (see Definition \ref{L even case} and Definition \ref{even translate})
for all $E\subseteq N$, $e=|E|$, $p\in\ZZ$ with $e+p$ even. 
\end{thm}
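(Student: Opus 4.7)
The plan is to adapt the score-induction used in the proof of Theorem \ref{full odd} to the Kirwan blow-up $W_n$. I would define the score $s(E,p) := |p| + \min(e, n+1-e)$, so that $\cC$ contains every $L_{E,p}$ with $s(E,p) \leq s+1$, and then proceed by induction on $s(E,p)$, with a secondary induction on $|p|$ at equal score. The $S_2$ symmetry reduces to the case $p \geq 0$.

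The key input would be an analog of Lemma \ref{K} on $W_n$. Because every strictly semistable point $p_T$ of $(\PP^1)^n$ has been blown up and then excised in passing to $(W_n)_{ss}$, the natural modification is to enlarge the Koszul index from $|I| = s+1$ to $|I| = s+2$: for any such $I$ the intersection $\bigcap_{i \in I} \De_{i0}$ in $(\PP^1)^n$ is entirely unstable and, because $|I| > s+1 = |T^c|$, avoids every $p_T$, so its Koszul resolution pulls back to an exact $\GG_m$-equivariant complex on $W_n$ that restricts to an acyclic complex of line bundles on $(W_n)_{ss}$:
$$0 \to \cO(-I) \otimes z^{-(s+2)} \to \cdots \to \bigoplus_{J \subseteq I,\,|J|=j} \cO(-J) \otimes z^{-j} \to \cdots \to \cO \to 0,$$
together with its $S_2$-dual built from the sections $y_i$. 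I would then split into three cases: for $e \leq s$ pick $I \subseteq N \setminus E$ of size $s+2$ (possible since $n-e \geq s+2$); for $e \geq s+2$ pick $I \subseteq E$ and use the dual sequence; the borderline case $e = s+1$ would be handled by a hybrid Koszul sequence mixing $x_i$ for $i$ in a chosen $(s+1)$-subset of $E^c$ with a single $y_{i_0}$ for some $i_0 \in E$. A score computation identical in spirit to the odd case, with $\min(e, n+1-e)$ replacing $\min(e,n-e)$, would then show that the pairs $(E \cup J, p-j)$ arising in the tensored resolution have either strictly smaller $|p|$ at equal score or strictly smaller score, and so fall under the inductive hypothesis.

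The step I expect to be the main obstacle is that the ``raw'' $G$-equivariant line bundles $\cO(-E) \otimes z^p$ appearing in the Koszul resolution differ from the actual line bundles $L_{E,p}$ of Definition \ref{even translate} by a correction twist $\cO(\sum_T \alpha_{T,E,p} E_T)$. Comparing $L_{E,p}$ and $L_{E \cup J, p-j}$ thus leaves an extra twist by $\cO(\sum_T \beta_T E_T)$ which must be resolved by iterating the short exact sequence $0 \to \cO \to \cO(E_T) \to \cO_{E_T}(E_T) \to 0$, introducing torsion sheaves $\cO_{\de_T}(-a,-b)$ on $\de_T = \de_{T\cup\{\infty\}} \cong \PP^s \times \PP^s$ with $(a,b)$ constrained by the uniform bound $|\alpha_{T,\cdot,\cdot}| \leq m = \lfloor (s+1)/2 \rfloor$ from Lemma \ref{MaxMin}. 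The hard part will be to verify that every such $\cO_{\de_T}(-a,-b)$ lies in the triangulated subcategory generated by the torsion collection $\cA$; for the explicit range of $(a,b)$ that actually arises I expect this to follow from the Beilinson resolution on $\PP^s \times \PP^s$ together with the $S_2$ symmetry $\de_T \leftrightarrow \de_{T^c}$, which exchanges the two $\PP^s$ factors and makes additional $\cO_{\de_{T^c}}(-b,-a)$ objects available.
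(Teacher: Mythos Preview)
Your proposal has a genuine gap at the torsion step. The bound $|\alpha_{T,E,p}|\le m=\lfloor(s+1)/2\rfloor$ from Lemma~\ref{MaxMin} holds \emph{only} for pairs $(E,p)$ already in the range of Theorem~\ref{even}; for a general pair the correct bound is $|x_{T,E,p}|\le S(E,p)/2$ (Lemma~\ref{ineqX_lemma}), which grows with the score. Thus when you attempt to generate an $L_{E,p}$ of large score, the torsion corrections you introduce will include sheaves of the form $\cO_{\de_T}(-a,0)$ with $a\ge\lceil(s+1)/2\rceil$. These are \emph{not} in $\cA$, and they are \emph{not} generated by $\cA$ alone: Beilinson on $\PP^s\times\PP^s$ expresses any $\cO_{\de_T}(-a,-b)$ in terms of the full square $\{\cO_{\de_T}(-a',-b'):0\le a',b'\le s\}$, but $\cA$ is missing precisely the sheaves $\cO_{\de_T}(-a',0)$ and $\cO_{\de_T}(0,-b')$ with $a',b'$ in the upper half of that range (and also $\cO_{\de_T}$ itself). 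So the phrase ``every such $\cO_{\de_T}(-a,-b)$ lies in the triangulated subcategory generated by $\cA$'' is false as stated, and the Beilinson/$S_2$-symmetry argument you sketch cannot repair it.

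The paper's proof addresses exactly this point by interleaving the generation of the missing torsion sheaves with the score induction. It works directly on $Z_N$ (avoiding any descent issue) with auxiliary line bundles $R_{E,p}$, $Q_{E,p}$, $V_{E,p}$ built from the honest divisors $\de_{i0},\de_{i\infty}$, and uses Koszul resolutions of size $|I|=s+1$ for these (Lemma~\ref{koszul}). At each step $q$ of the induction, once $R_{E,p}$ and $Q_{E,p}$ with $e=s+1$ and score $2\lfloor s/2\rfloor+2q$ are known to be generated, a comparison of $Q_{E,p}$ and $V_{E,p}$ (Corollary~\ref{next torsion}) produces the single new torsion sheaf $\cO_{\de_T}(-(\lfloor s/2\rfloor+q),0)$ that is needed at the next stage. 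This bootstrap is the essential mechanism your plan lacks. A secondary issue: your raw $\cO(-E)\otimes z^p$ on $(W_n)_{ss}$ generally fail the divisibility-by-$4$ descent condition along $E_T$, so the Koszul complex you write down does not live in $D^b(Z_N)$ but in the derived category of the quotient stack; the paper sidesteps this entirely by working with $R_{E,p},Q_{E,p}$, which are genuine line bundles on $Z_N$.
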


\bp[Proof of Theorem \ref{even} - fullness]
By Theorem \ref{full even}, the collection $\cC$ generates all the objects $Rp_*({\pi^*_I}\hat\GG)$ from Corollary \ref{rewrite}. Fullness then follows by Corollary \ref{S is enough}. 
\ep

To prove Theorem \ref{full even} we do an induction on the \emph{score} $S(E,p)$:
\begin{equation}\label{score notn}
S(E,p):=|p|+\min\{e,n-e\},
\end{equation}
\begin{equation}\label{q}
\text{written as}\quad S(E,p)=2\Bigl\lfloor\frac{s}{2}\Bigr\rfloor+2q,\quad q\in\ZZ.
\end{equation}

\begin{rmk}\label{reformulate}
As $S(E,p)$ is even, the range of $(E,p)$ in Theorem \ref{even} is precisely:
\bi
\item If $s$ is even: $S(E,p)\leq s$,
\item If $s$ is odd:  $S(E,p)\leq s+1$ if $e\leq s+1$ and $S(E,p)\leq s-1$ if $e\geq s+2$.  
\ei

Using notation (\ref{q}), $(E,p)$ is not in the range of Theorem \ref{even} if $q\geq1$ when $s$ is even or $s$ is odd and $e\geq s+2$, and if $q\geq2$ when $s$ is odd and $e\leq s+1$. 
\end{rmk}

To prove Theorem \ref{full even} we  introduce three other types of line bundles. 
\begin{notn}\label{extraLB}
Let $n=2s+2$, $E\subseteq N$, $e=|E|$ and $p\in\ZZ$. 
On $Z_N$ let 
$$R_{E,p}=-\left(\frac{e-p}{2}\right)\psi_{\infty}-\sum_{i\in E}\de_{i\infty},\quad
Q_{E,p}=-\left(\frac{e+p}{2}\right)\psi_0-\sum_{i\in E}\de_{i0},$$
\begin{equation}\label{RS to V}
V_{E,p}:=R_{E,p}+\sum_{x_{T,E,p}<0}|x_{T,E,p}| \de_{T\cup\{\infty\}} 
=Q_{E,p}+\sum_{x_{T,E,p}>0}|x_{T,E,p}| \de_{T\cup\{\infty\}}, 
\end{equation}
where the last equality follows from (\ref{L to R}) and (\ref{L to S}). 
\end{notn}
We recall for the reader's convenience that using Notation \ref{The alpha} we have
$$L_{E,p}=-\left(\frac{e-p}{2}\right)\psi_{\infty}-\sum_{i\in E}\de_{i\infty}-\sum_{x_{T,E,p}>0}x_{T,E,p}\de_{T\cup\{\infty\}}.$$
Therefore, 
\begin{equation}\label{L to R}
R_{E,p}=L_{E,p}+\sum_{x_{T,E,p}>0} |x_{T,E,p}|\de_{T\cup\{\infty\}} 
\end{equation}
and by using Lemma \ref{relations2}, we have also 
\begin{equation}\label{L to S}
Q_{E,p}=L_{E,p}+\sum_{x_{T,E,p}<0} |x_{T,E,p}|\de_{T\cup\{\infty\}}. 
\end{equation}

We remark that using Lemma \ref{dictionary2}, we have:
$$R_{E,p}=\cO(-E)(\sum x_T E_T)\otimes z^p,\quad Q_{E,p}=\cO(-E)(-\sum x_T E_T)\otimes z^p,$$
$$L_{E,p}=\cO(-E)(-\sum |x_T| E_T)\otimes z^p,\quad V_{E,p}=\cO(-E)(\sum |x_T| E_T)\otimes z^p.$$

\begin{rmk}\label{symmetry}
It is clear by the definition that by the $S_2$ symmetry (i.e., exchanging $0$ with $\infty$) the line bundle 
$R_{E,p}$ is exchanged with $Q_{E,-p}$. The line bundles $R_{E,p}$, $Q_{E,p}$ will be crucial for the proof of Theorem \ref{full even}. We note that the line bundles $V_{E,p}$ are used only in the proof of Corollary \ref{next torsion}. 
\end{rmk}

For every divisor $\de_T:= \de_{T\cup\{\infty\}}$, we have by Lemma \ref{restrict to delta} that
\begin{equation}\label{resRS}
{R_{E,p}}_{|\de_T}=\cO(-x_{T,E,p},0),\quad {Q_{E,p}}_{|\de_T}=\cO(0, x_{T,E,p}).
\end{equation}
From here on, the notation $\cO(-a,-b)$ indicates that $\cO(-a)$ (resp., $\cO(-b)$) corresponds to the component marked by $\infty$
(resp., marked by $0$). 

\begin{defn}
We say that line bundles $L$ and $L'$ are \emph{related by quotients} $Q^i$ if there are exact sequences
$$0\ra L^{i-1}\ra L^i\ra Q^i\ra0\quad (i=1,\ldots, t),$$
$$L_0=L,\quad L_t=L'.$$ 
Note that when $L'=L+\sum \be_T \de_T$ with $\be_T\geq 0$ for all $T$, the quotients $Q^i$ are direct sums of torsion sheaves of type 
$\cO_{\de_T}(-a,-b)$. 
\end{defn}

\begin{lemma}\label{quotients}
Let $E\subseteq N$, $e=|E|$, $p\in\ZZ$, such that $e+p$ even. Then:
\bi
\item[(i) ]  $L_{E,p}$ and $R_{E,p}$ are related by quotients which are direct sums of type
$$\cO_{\de_T}(-x_T+i,i),\quad 0\leq i<|x_T|=x_T\quad (x_T>0)$$
\item[(ii) ] $L_{E,p}$ and $Q_{E,p}$ are related by quotients which are direct sums of type
$$\cO_{\de_T}(i,x_T+i),\quad 0\leq i<|x_T|=-x_T\quad (x_T<0)$$
\item[(iii) ]  $R_{E,p}$ and $V_{E,p}$ are related by quotients which are direct sums of type
$$\cO_{\de_T}(-x_T-i,-i),\quad 0<i\leq |x_T|=-x_T\quad (x_T<0)$$
\item[(iv) ] $Q_{E,p}$ and $V_{E,p}$ are related by quotients which are direct sums of type
$$\cO_{\de_T}(-i,x_T-i),\quad 0<i\leq |x_T|=x_T\quad (x_T>0),$$
\ei
where we denote for simplicity $\de_T:=\de_{T\cup\{\infty\}}$ and $x_T:=x_{T,E,p}$. In particular, all pairs are related by quotients of type 
$$\cO(-a,*),\quad \cO(*,-a),\quad \text{with}\quad 0<a\leq\frac{S(E,p)}{2}.$$ 
\end{lemma}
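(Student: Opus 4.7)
\textbf{Proof plan for Lemma \ref{quotients}.} The plan is to build each of the four relations as a telescope of canonical short exact sequences of the form
\begin{equation*}
0\to M+i\,\de_T\to M+(i+1)\,\de_T\to \bigl(M+(i+1)\,\de_T\bigr)\big|_{\de_T}\to 0,
\end{equation*}
one divisor $\de_T:=\de_{T\cup\{\infty\}}$ at a time, and compute the restriction on the right using Lemma~\ref{restrict to delta} together with \eqref{resRS}. Before anything else I would record the geometric input that the divisors $\de_{T\cup\{\infty\}}$ indexed by distinct balanced subsets $T$ are \emph{pairwise disjoint} in $Z_N$: two partitions $\{T\cup\{\infty\},T^c\cup\{0\}\}$ and $\{T'\cup\{\infty\},{T'}^c\cup\{0\}\}$ with $|T|=|T'|=s+1$ and $T\neq T'$ are not nested, so there is no boundary stratum corresponding to both simultaneously. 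Consequently $\de_{T'}|_{\de_T}=0$ whenever $T'\neq T$, which means that when I add a $\de_T$ the quotient only sees the contribution of that specific $\de_T$.

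Next I would compute the base restrictions. From \eqref{L to R}, \eqref{L to S} and the disjointness just noted, one gets
\begin{equation*}
L_{E,p}\big|_{\de_T}=\begin{cases} \cO(0,x_T) & x_T>0\\ \cO(-x_T,0) & x_T<0,\end{cases}
\end{equation*}
while $R_{E,p}|_{\de_T}=\cO(-x_T,0)$ and $Q_{E,p}|_{\de_T}=\cO(0,x_T)$ by \eqref{resRS}, and $\de_T|_{\de_T}=\cO(-1,-1)$ by Lemma~\ref{restrict to delta}. Then for each of (i)--(iv) I build the corresponding telescope. For (i), go from $L_{E,p}$ to $R_{E,p}=L_{E,p}+\sum_{x_T>0} x_T\,\de_T$ by adding one $\de_T$ at a time: after $i$ steps at a given $T$ with $x_T>0$, the next quotient is
\begin{equation*}
\bigl(L_{E,p}+(i+1)\de_T\bigr)\big|_{\de_T}=\cO(0,x_T)+(i+1)\cO(-1,-1)=\cO\bigl(-(i+1),x_T-(i+1)\bigr),
\end{equation*}
which, after re-indexing $j=x_T-(i+1)$, becomes $\cO(-x_T+j,j)$ for $0\le j<x_T$ as stated. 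Parts (ii), (iii), (iv) are analogous: go from $L_{E,p}$ to $Q_{E,p}$ adding $\de_T$ for $x_T<0$, respectively from $R_{E,p}$ to $V_{E,p}$ (adding $\de_T$ for $x_T<0$) and from $Q_{E,p}$ to $V_{E,p}$ (adding $\de_T$ for $x_T>0$), with the same computation using the appropriate base restriction.

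Finally, for the closing uniform statement I would observe that every quotient that appears in (i)--(iv) has at least one of its two bidegree entries strictly negative and of absolute value at most $|x_{T,E,p}|$. So it suffices to show $|x_{T,E,p}|\le S(E,p)/2$. Writing $k=|E\cap T|$, one has $x_T=k-(e-p)/2$, and from Lemma~\ref{MaxMin} the quantity $p+|E\cap T|-|E\cap T^c|=2x_T$ is bounded in absolute value by $2m=2\lfloor(s+1)/2\rfloor$; a short direct check using the constraints $\max(0,e-s-1)\le k\le \min(e,s+1)$ gives the sharper bound $|x_T|\le S(E,p)/2$ in all cases (split on the signs of $p$ and on whether $e\le s+1$, and note the extremal $k$ in each case realize $(e+p)/2$ or $(n-e+p)/2$).

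The only genuinely subtle point is the disjointness of the divisors $\de_{T\cup\{\infty\}}$, which is what makes the telescope work cleanly: without it, the quotients at successive steps would pick up unwanted contributions from the other exceptional divisors and the tidy formulas in (i)--(iv) would break. Everything else is bookkeeping with restrictions and Lemma~\ref{restrict to delta}.
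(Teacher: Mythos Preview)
Your proof is correct and follows exactly the approach the paper has in mind; the paper's own proof is a one-line pointer to the restriction formulas \eqref{resRS}, the relations \eqref{L to R}, \eqref{L to S}, \eqref{RS to V}, and (for the last assertion) the forward-referenced Lemma~\ref{ineqX_lemma}, and you have simply unpacked that. The disjointness of the divisors $\de_{T\cup\{\infty\}}$ which you highlight is indeed the implicit geometric input that makes the telescope clean.

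One small remark on the final bound: your appeal to Lemma~\ref{MaxMin} is misplaced, since that lemma is stated only for pairs $(E,p)$ in the range of Theorem~\ref{even}, not for arbitrary $(E,p)$. You seem aware of this, since you immediately follow with ``a short direct check \ldots gives the sharper bound $|x_T|\le S(E,p)/2$''. That direct check is precisely the content of Lemma~\ref{ineqX_lemma} (stated just after the present lemma in the paper), so you may simply cite it rather than sketching the case split.
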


\bp
This follows immediately from (\ref{resRS}), (\ref{L to R}),  (\ref{L to S}) and (\ref{RS to V}). The last statement follows by Lemma \ref{ineqX_lemma}. 
\ep

\begin{lemma}\label{ineqX_lemma}
Let $n=2s+2$, $E\subseteq N$, $e=|E|$, $p\in\ZZ$, $e+p$ even. Then for all $T$
\begin{equation}\label{score ineq}
|x_{T,E,p}|\leq \frac{S(E,p)}{2},
\end{equation}
where $S(E,p)$ is the score of the pair $(E,p)$ (Notation \ref{score notn}). 
Furthermore, 
$$ |x_{T,E,p}|=x_{T,E,p}=\frac{S(E,p)}{2}\quad\text{ if and only if }\quad T\subseteq E,\quad p\geq0$$

 \end{lemma}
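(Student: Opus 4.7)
The plan is to reduce the inequality to an elementary set-theoretic bound on $|E\cap T|$ and then translate it to a bound on $x_T := |E\cap T| - \frac{e-p}{2}$. The statement is ultimately a consequence of the fact that $|T|=|T^c|=s+1$, i.e., that $T$ splits $N$ into two halves of equal size $\tfrac{n}{2}$.

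The first step is to record the trivial bounds
$$\max(0,\,e-s-1)\;\leq\;|E\cap T|\;\leq\;\min(e,\,s+1),$$
where the upper bound is just $|E\cap T|\leq\min(|E|,|T|)$ and the lower bound comes from $|E\cap T|=e-|E\cap T^c|\geq e-|T^c|=e-s-1$. Subtracting $\tfrac{e-p}{2}$ throughout and using $n=2(s+1)$, the upper endpoint becomes
$$\min(e,s+1)-\tfrac{e-p}{2}=\min\!\left(\tfrac{e+p}{2},\,\tfrac{n-e+p}{2}\right)=\tfrac{p+\min(e,n-e)}{2},$$
and symmetrically the lower endpoint becomes $\tfrac{p-\min(e,n-e)}{2}$. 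Hence
$$\tfrac{p-\min(e,n-e)}{2}\;\leq\;x_{T,E,p}\;\leq\;\tfrac{p+\min(e,n-e)}{2},$$
and taking absolute values gives $|x_{T,E,p}|\leq\tfrac{|p|+\min(e,n-e)}{2}=\tfrac{S(E,p)}{2}$, which is (\ref{score ineq}).

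For the equality assertion I would trace back when the upper bound is saturated: $x_{T,E,p}=\tfrac{S(E,p)}{2}\geq 0$ forces both $|p|=p$ (so $p\geq 0$) and $|E\cap T|=\min(e,s+1)$. The latter is realized by the extremal intersection pattern, i.e., $T\subseteq E$ in the regime $e\geq s+1$ (equivalently $\min(e,n-e)=n-e$) and $E\subseteq T$ in the regime $e\leq s+1$. The only real subtlety is keeping the sign bookkeeping straight in the translation step; aside from that, the argument is a direct cardinality computation with no combinatorial obstruction.
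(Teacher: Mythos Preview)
Your argument is correct and is exactly the kind of direct cardinality computation the paper has in mind; the paper itself omits the proof, simply declaring it ``straightforward''. Your derivation of the two-sided bound
\[
\tfrac{p-\min(e,n-e)}{2}\le x_{T,E,p}\le \tfrac{p+\min(e,n-e)}{2}
\]
from $\max(0,e-s-1)\le |E\cap T|\le\min(e,s+1)$ is clean and gives \eqref{score ineq} immediately.

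One remark on the equality clause: your analysis is in fact sharper than the lemma as stated. You correctly find that $x_{T,E,p}=\tfrac{S(E,p)}{2}$ forces $p\ge 0$ and $|E\cap T|=\min(e,s+1)$, which translates to $T\subseteq E$ when $e\ge s+1$ but to $E\subseteq T$ when $e\le s+1$. The paper's ``if and only if $T\subseteq E$'' is literally correct only in the regime $e\ge s+1$; this is harmless because every application of the equality case in the paper (Corollaries~\ref{next torsion} and~\ref{next torsion 2}) has $e=s+1$, where the two conditions coincide and give $T=E$. So nothing is wrong with your reasoning---you have simply uncovered that the printed statement is tacitly restricted to the range in which it is used.
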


\noindent The proof is straightforward and we omit it. Note, (\ref{alpha ineq}) is a particular case. 

\begin{cor}\label{next torsion}
Let $e=s+1$, $p\geq0$, and $(E,p)$ such that
$$S(E,p)=2\Bigl\lfloor\frac{s}{2}\Bigr\rfloor+2q,$$ 
with $p=2q-1$, $q\geq1$ if $s$ is even, and $p=2q-2$, $q\geq2$ if $s$ is odd. 
Assume the following objects are generated by $\cC$:
\bi
\item[(i) ] All torsion sheaves $\cO_{\de_T}(-a,0)$ for all $0<a<\Bigl\lfloor\frac{s}{2}\Bigr\rfloor+q$ and all $T$,
\item[(ii) ] The line bundles $R_{E,p}$, $Q_{E,p}$. 
\ei
Then $\cO_{\de_{T}}(-(\Bigl\lfloor\frac{s}{2}\Bigr\rfloor+q),0)$ with $T=E$ is generated by $\cC$. Here $\de_T:=\de_{T\cup\{\infty\}}$. 
\end{cor}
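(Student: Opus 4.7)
The plan is to recognize the target sheaf $\cO_{\de_E}(-x_E,0)$ (where $\de_E := \de_{E\cup\{\infty\}}$ and $x_E = \lfloor s/2\rfloor + q$) as the last successive quotient in the filtration relating $Q_{E,p}$ to $V_{E,p}$ provided by Lemma~\ref{quotients}(iv). Among the quotients $\cO_{\de_T}(-i, x_T - i)$ indexed by $T$ with $x_{T,E,p} > 0$ and $1 \leq i \leq x_T$, the case $(T, i) = (E, x_E)$ yields precisely the target. I would reorder the filtration so that this particular step is taken last; the proof then reduces to showing that (a)~$V_{E,p}$ is generated by $\cC$, and (b)~every quotient $\cO_{\de_T}(-i, x_T - i)$ with $(T, i) \neq (E, x_E)$ is generated by $\cC$.

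For (a), I would apply Lemma~\ref{quotients}(iii) to relate the given $R_{E,p}$ to $V_{E,p}$: its successive quotients are $\cO_{\de_T}(-x_T - i, -i)$ for $T \neq E$ (since $x_T < 0$ forces $T \neq E$), with $1 \leq i \leq |x_T|$. The key inequality $|x_T| < x_E$ implies both coordinate moduli on $\de_T \cong \PP^s \times \PP^s$ stay strictly below $\lfloor s/2\rfloor + q$. Expanding by Beilinson on the first $\PP^s$-factor, each such line bundle lies in the triangulated hull of $\{\cO_{\de_T}(-a, -i) : 0 \leq a \leq s\}$; for $a > 0$ these live in $\cC$ by Theorem~\ref{even}, while the boundary term $\cO_{\de_T}(0, -i)$ is provided by assumption~(i) together with the $S_2$-symmetry of $\cC$ (which exchanges $\de_T$ with $\de_{T^c}$ and swaps the two $\PP^s$-factors), applicable since $i < \lfloor s/2\rfloor + q$.

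For (b), the symmetric Beilinson argument on the \emph{second} $\PP^s$-factor handles each remaining quotient, since $(T, i) \neq (E, x_E)$ forces $i < x_E = \lfloor s/2\rfloor + q$: for $T \neq E$ we have $i \leq x_T < x_E$, and for $T = E$ we have $i < x_E$ by exclusion. Once (a) and (b) are in hand, together with the given generation of $Q_{E,p}$, the final short exact sequence in the reordered filtration exhibits $\cO_{\de_E}(-x_E, 0)$ as the cokernel of a map between two $\cC$-generated objects, completing the argument.

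The main obstacle is ensuring that the boundary torsion sheaves $\cO_{\de_T}(-a, 0)$ and $\cO_{\de_T}(0, -b)$ arising from Beilinson expansions of the mixed-sign line bundles all fall within the strict range $a, b < \lfloor s/2\rfloor + q$ covered by assumption~(i). This hinges on the strict inequalities $|x_T| < x_E$ for $T \neq E$ and the exclusion $(T, i) \neq (E, x_E)$, and must be verified uniformly over all relevant pairs $(T, i)$; moreover, when $x_T$ or $|x_T|$ exceeds $s$ the Beilinson resolution on $\PP^s$ becomes a longer Koszul complex, and one must check that the resulting derived-categorical shifts do not force us out of the prescribed range.
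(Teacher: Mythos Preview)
Your approach is the same as the paper's: relate $R_{E,p}$ to $V_{E,p}$ via Lemma~\ref{quotients}(iii), then $Q_{E,p}$ to $V_{E,p}$ via Lemma~\ref{quotients}(iv), isolating the target sheaf as the unique extremal quotient $(T,i)=(E,x_E)$ in the second filtration.

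Two remarks. For step~(a), the paper uses a sharper bound than yours: since $p>0$ and $x_T<0$, one has
\[
|x_T|=\frac{e-p}{2}-|E\cap T|\le\frac{s+1-p}{2}<\frac{s+1}{2},
\]
so both coordinates of every quotient $\cO_{\de_T}(|x_T|-i,-i)$ have absolute value strictly below $(s+1)/2$, and these sheaves are generated by $\cA$ alone---assumption~(i) is not needed here. Your weaker bound $|x_T|<x_E$ still works but invokes~(i) unnecessarily.

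Your closing concern is genuine only for step~(b), where (for $T=E$ and $q$ large) $i$ can exceed~$s$. After the second-factor Beilinson expansion you then meet terms $\cO_{\de_T}(-i,-b)$ with $i>s$, $0<b\le s$, which are not in $\cA$. A further first-factor Beilinson expansion finishes the job: the terms with $a>0$ lie in $\cA$, and for $a=0$ the sheaf $\cO_{\de_T}(0,-b)$ is covered by assumption~(i) via $S_2$-symmetry, since $i>s$ together with $i<x_E=\lfloor s/2\rfloor+q$ forces $b\le s<\lfloor s/2\rfloor+q$. The paper asserts generation of these quotients without spelling out this detail.
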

As $\cC$ is invariant under the action of $S_2$, it follows from Corollary \ref{next torsion} that  a similar statement holds when replacing 
$\cO_{\de_T}(-a,0)$ with $\cO_{\de_T}(0,-a)$.  

\bp
We claim that $V_{E,p}$ is generated by $\cC$. 
Since $R_{E,p}$ is generated by $\cC$ by assumption, using Lemma \ref{quotients}(iii), it suffices to prove that when $x_T<0$, 
$\cO(-x_T-i,-i)$ is generated by $\cA$, for all $0<i\leq |x_T|$, i.e., $|x_T|<\frac{s+1}{2}$. 
Since the assumptions on $q$ imply that $p>0$, we have  that
$$|x_T|=-x_T=\frac{e-p}{2}-|E\cap T|\leq \frac{e-p}{2}=\frac{s+1-p}{2}<\frac{s+1}{2},$$
and the claim follows. By Lemma \ref{quotients}(iv), the quotients relating 
$Q_{E,p}$ and $V_{E,p}$ have the form $\cO_{\de_T}(-i,x_T-i)$ for $0<i\leq x_T$. 
By Lemma  \ref{ineqX_lemma}, we have that 
$x_T\leq \frac{S(E,p)}{2}=\Bigl\lfloor\frac{s}{2}\Bigr\rfloor+q$, 
with equality if and only if $T\subseteq E$. Since $e=s+1$, we must have $T=E$.  It follows that all but one quotient, namely 
$\cO_{\de_{T}}(-(\Bigl\lfloor\frac{s}{2}\Bigr\rfloor+q),0)$ for $T=E$ (when $i=x_T=\frac{S(E,p)}{2}$) are already by assumption generated by $\cC$.
Note that this quotient appears exactly once. Since $Q_{E,p}$, $V_{E,p}$ are generated by $\cC$, it follows that this quotient is also. 
\ep

\begin{cor}\label{next torsion 2}
Let $q\in\ZZ$, $q>0$. Assume that 
$R_{E,p}$, $Q_{E,p}$ are generated by $\cC$ whenever 
$S(E,p)=2\Bigl\lfloor\frac{s}{2}\Bigr\rfloor+2q'$,  
with $0<q'\leq q$, and $e=s+1$. Then for all $T$, $\de_T:=\de_{T\cup\{\infty\}}$, 
the following  torsion sheaves are generated by $\cC$:
$$\cO_{\de_T}(-a,0),\quad \cO_{\de_T}(0,-a)\quad \text{when}\quad 0<a\leq\Bigl\lfloor\frac{s}{2}\Bigr\rfloor+q$$ 
\end{cor}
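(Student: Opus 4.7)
The plan is to prove Corollary~\ref{next torsion 2} by induction on $q$, with Corollary~\ref{next torsion} as the main engine. Since $\cC$ is $S_2$-invariant and the $S_2$-action interchanges $\cO_{\de_T}(-a,0)$ with $\cO_{\de_{T^c}}(0,-a)$, it suffices to produce the sheaves $\cO_{\de_T}(-a,0)$ for every $T$ and every $0 < a \leq \lfloor s/2\rfloor + q$; the opposite family then follows automatically. Moreover, $\cO_{\de_T}(-a,0)$ lies in $\cA \subset \cC$ whenever $a \leq \lfloor s/2\rfloor$, so the real content is generation for the new values $\lfloor s/2\rfloor < a \leq \lfloor s/2\rfloor + q$.

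For the inductive step, assume the corollary is established for all positive $q' < q$ and target the top value $a := \lfloor s/2\rfloor + q$. Fix $T$ of size $s+1$ and set $E = T$; choose $p \geq 0$ with $S(E,p) = 2\lfloor s/2\rfloor + 2q$. Since $|E| = s+1$ forces $\min\{e,n-e\} = s+1$, one finds $p = 2q - 1$ when $s$ is even and $p = 2q - 2$ when $s$ is odd (the latter requires $q \geq 2$). Corollary~\ref{next torsion} then applies: hypothesis~(i) holds by the inductive hypothesis combined with direct membership in $\cA$, and hypothesis~(ii) is precisely the standing assumption of Corollary~\ref{next torsion 2} at $q' = q$. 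The conclusion yields $\cO_{\de_E}(-a,0) = \cO_{\de_T}(-a,0)$. As $E = T$ ranges over all subsets of size $s+1$, this covers every $T$. This simultaneously handles the base case $q = 1$ for $s$ even (with $p = 1$).

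The main obstacle is the single remaining case: $s$ odd with $q = 1$, where Corollary~\ref{next torsion} does not apply. Here one must produce $\cO_{\de_T}(-(s+1)/2, 0)$ by a separate argument. I would use that for $e = s+1$ and $p = 0$ we have $S(E,0) = s+1$, which by Remark~\ref{reformulate} is precisely the largest allowed score for $e \leq s+1$ when $s$ is odd, so $L_{E,0}$ itself belongs to $\cC$. With $L_{E,0}$, $R_{E,0}$, and $Q_{E,0}$ all generated, Lemma~\ref{quotients}(i)--(ii) exhibits the differences $R_{E,0} - L_{E,0}$ and $L_{E,0} - Q_{E,0}$ as iterated extensions by sheaves supported on the various $\de_{T'}$. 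For $T' \neq E$ one has $|x_{T',E,0}| < (s+1)/2$, so those quotients are within the reach of $\cA$, while the filtration on $\de_E$ contributes the target $\cO_{\de_E}(-(s+1)/2, 0)$ as a distinguished summand. The technical heart of the edge case is to isolate this single quotient by ordering the filtration carefully and exploiting the $S_2$-symmetry that swaps $R_{E,0}$ and $Q_{E,0}$ to cancel the residual off-target contributions on the $\de_{T'}$ with $T' \neq E$.
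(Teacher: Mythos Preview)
Your overall strategy matches the paper's exactly: reduce to $\cO_{\de_T}(-a,0)$ by $S_2$-invariance, observe that $a\le\lfloor s/2\rfloor$ already lies in $\cA$, induct on $q$, and for each $q$ set $E=T$ with $e=s+1$ and the appropriate $p$ so that Corollary~\ref{next torsion} fires. The paper does precisely this.

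The only place your write-up diverges is the edge case $s$ odd, $q=1$, and there your last sentence is off. The paper's argument is simpler than what you sketch and does \emph{not} use $Q_{E,0}$, any ``careful ordering'', or any $S_2$-cancellation. One takes $E=T$, $p=0$; then $L_{E,0}\in\cC$ by Remark~\ref{reformulate}, and $R_{E,0}$ is generated by hypothesis. Lemma~\ref{quotients}(i) relates $L_{E,0}$ and $R_{E,0}$ by quotients indexed only by those $T'$ with $x_{T',E,0}>0$. For such $T'\ne E$ one has $x_{T'}=|E\cap T'|-(s+1)/2\le s-(s+1)/2=(s-1)/2$, so every quotient there is already in $\cA$; for $T'=E$ one has $x_E=(s+1)/2$, and the target $\cO_{\de_E}(-(s+1)/2,0)$ occurs exactly once in the filtration. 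That is the whole argument.

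Note also that bringing in Lemma~\ref{quotients}(ii) as you do is actually counterproductive: the $L$-to-$Q$ filtration involves $T'$ with $x_{T'}<0$, and for $T'=E^c$ one gets $|x_{E^c,E,0}|=(s+1)/2$, contradicting your assertion that $|x_{T',E,0}|<(s+1)/2$ for all $T'\ne E$. So there is nothing to ``cancel''; you should simply drop $Q_{E,0}$ from the edge-case argument and use only the $(L_{E,0},R_{E,0})$ pair.
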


\bp
By the $S_2$ symmetry, it suffices to prove the statement  for $\cO_{\de_T}(-a,0)$. 
For any $q>0$, taking $E\subseteq N$ with $e=s+1$ and $p=2q-1$ when $s$ is even, or $p=2q-2$ when $s$ is odd, gives a  pair $(E,p)$ with 
$S(E,p)=2\Bigl\lfloor\frac{s}{2}\Bigr\rfloor+2q$. 
If $s$ is even, or if $s$ is odd and $q\geq2$, the assumptions of  Corollary \ref{next torsion} are satisfied. By induction on $q>0$, 
 $\cO_{\de_T}(-a,0)$ is generated by $\cC$ when $T=E$, $a=\Bigl\lfloor\frac{s}{2}\Bigr\rfloor+q$. 
 
The only case left is when $s$ is odd and $q=1$ ($p=0$). By assumption $R_{E,0}$, $Q_{E,0}$ are generated by $\cC$ if $e=s+1$ ($S(E,0)=s+1$). 
We have to prove that $\cO_{\de_T}(-\frac{s+1}{2},0)$ is generated by $\cC$. Taking $E=T$, $p=0$, we have that the pair
$(E,0)$ is in the range of Theorem \ref{even}. Hence, $L_{E,0}$ is in $\cC$. By Lemma \ref{quotients} and Lemma \ref{ineqX_lemma} 
$L_{E,0}$ and $R_{E,0}$ are related by quotients which are direct sums of sheaves in $\cA$, with only one quotient which is $\cO_{\de_T}(-\frac{s+1}{2},0)$
for $T=E$ (the only possibility to have $x_T=\frac{S(E,0)}{2}=\frac{s+1}{2}$ is when $T=E$). Note that this quotient appears exactly once. 
The statement follows.
\ep

\begin{lemma}\label{koszul}(Koszul resolutions)
Let $p\in\ZZ$, $E\subseteq N$. 
\bi
\item[(K1) ]
If $e\leq s+1$, letting $I\subseteq N\setminus E$, $|I|=s+1$, there is a long exact sequence:
$$0\ra Q_{E\cup I,p-s-1}\ra\ldots\ra\bigoplus_{J\subseteq I, |J|=j}Q_{{E\cup J},p-j}\ra\ldots\ra Q_{E, p}\ra0.$$
\item[(K2) ] If $e\geq s+1$, letting $I\subseteq E$, $|I|=s+1$, there is a long exact sequence:
$$0\ra R_{E,p}\ra\ldots\ra\bigoplus_{J\subseteq I, |J|=j}R_{{E\setminus J},p-j}\ra\ldots\ra R_{E\setminus I, p-s-1}\ra0.$$
\ei
\end{lemma}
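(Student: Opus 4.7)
The approach parallels the proof of Lemma~\ref{G Koszul} in the odd case: on the smooth variety $Z_N$ we exhibit $(s+1)$ sections of line bundles whose common vanishing locus is empty, and then tensor the resulting exact Koszul complex with an appropriate $Q$-- or $R$--bundle so that the terms match those in the statement. Working directly on $Z_N$ (rather than on $(\PP^1)^n$ as in the odd case) sidesteps having to track the proper--transform corrections along the exceptional divisors $E_T$.

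For (K1), take $I\subseteq N\setminus E$ with $|I|=s+1$ and, for each $i\in I$, the canonical section of $\cO(\de_{i0})$ on $Z_N$. The key claim is $\bigcap_{i\in I}\de_{i0}=\emptyset$ on $Z_N$: a common point would represent an $\cA$--stable weighted curve in which the $s+1$ light markings from $I$ are all coincident with the heavy marking~$0$, but then the total weight at that point would equal $\frac{s+1}{n}+\frac{1}{2}+\eta=1+\eta>1$, violating $\cA$--stability. Since $Z_N$ is smooth, empty common vanishing makes the Koszul complex
$$0\to\cO\Bigl(-\textstyle\sum_{i\in I}\de_{i0}\Bigr)\to\cdots\to\bigoplus_{J\subseteq I,\,|J|=j}\cO\Bigl(-\textstyle\sum_{i\in J}\de_{i0}\Bigr)\to\cdots\to\cO\to 0$$
exact. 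Tensoring with $Q_{E,p}$ gives (K1): since $J\cap E=\emptyset$, Notation~\ref{extraLB} yields by direct inspection $Q_{E\cup J,p-j}=Q_{E,p}-\sum_{i\in J}\de_{i0}$, so the twisted complex is precisely the sequence in (K1).

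For (K2), take $I\subseteq E$ with $|I|=s+1$ and use the canonical sections of $\cO(\de_{i\infty})$ for $i\in I$. The identical weight count, with $\infty$ replacing $0$, yields $\bigcap_{i\in I}\de_{i\infty}=\emptyset$, hence an analogous exact Koszul complex in these sections. Tensor with $R_{E\setminus I,p-s-1}$ and, in each summand, reindex via $J:=I\setminus J'$, so that $|J|=s+1-|J'|$. Since $I\subseteq E$, a direct computation from Notation~\ref{extraLB} gives $R_{E\setminus I,p-s-1}-\sum_{i\in J'}\de_{i\infty}=R_{E\setminus J,p-j}$, and the resulting sequence is exactly (K2). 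The only nontrivial input in either part is the emptiness of the common vanishing locus, which is immediate from the stability inequalities built into $Z_N$; the identification of terms is a formal manipulation of the definitions of $Q_{E,p}$ and $R_{E,p}$.
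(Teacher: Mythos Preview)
Your proof is correct and follows essentially the same approach as the paper: both establish $\bigcap_{i\in I}\de_{i0}=\emptyset$ (resp.\ $\bigcap_{i\in I}\de_{i\infty}=\emptyset$) on $Z_N$, take the associated exact Koszul complex, and tensor by the line bundle $Q_{E,p}$ (resp.\ $R_{E\setminus I,\,p-s-1}=-\frac{e-p}{2}\psi_\infty-\sum_{i\in E\setminus I}\de_{i\infty}$) to identify the terms. The paper additionally remarks on transverse intersection of the $\de_{i\infty}$, but since the full intersection is empty this is not needed for exactness, and your stability-weight justification of emptiness is a welcome explicit argument for what the paper simply asserts.
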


\bp
We have $\bigcap_{i\in I}\de_{i\infty}=\emptyset$ and the boundary divisors $\{\de_{i\infty}\}_{i\in I}$ intersect transversely
(the divisors intersect properly and the intersection is smooth, being a Hassett space). 
It follows that there is a long exact sequence
$$0\ra \cO\left(-\sum_{i\in I} \de_{i\infty}\right)\ra\bigoplus_{j\in I}  \cO\left(-\sum_{i\in I\setminus\{j\}} \de_{i\infty}\right)
\ra\bigoplus_{j,k\in I}  \cO\left(-\sum_{i\in I\setminus\{j,k\}}\de_{i\infty}\right)\ra\ldots$$
$$\ldots\ra \bigoplus_{i\in I}  \cO(-\de_{i\infty})\ra\cO\ra 0.$$
Tensoring this long exact sequence by
$-\sum_{i\in E\setminus I}\de_{i\infty}-\frac{e-p}{2}\psi_{\infty},$ 
gives the second long exact sequence in the lemma. 
The first long exact sequence is obtained in a similar way 
by considering the Koszul resolution of the intersection of the boundary divisors $\{\de_{i0}\}_{i\in I}$.  
\ep

\begin{lemma}\label{slopes in K}
Assume $p\geq0$ and $E\subseteq N$ such that 
$$S(E,p)=2\Bigl\lfloor\frac{s}{2}\Bigr\rfloor+2q,$$
and the pair $(E,p)$ is such that $q\geq1$ if $s$ is even and $q\geq 2$ if $s$ is odd. 
In the notations of Lemma \ref{koszul}, we have: 
\bi
\item[(1) ] If $e\leq s+1$ then $Q_{E\cup J, p-j}$ in Lemma \ref{koszul}(K1) satisfies 
$S(E\cup J, p-j)\leq S(E,p)$. 
If equality holds, then $|p-j|<p$ if $j\neq0$
\item[(2) ] If $e\geq s+1$ then $R_{E\setminus J, p-j}$ in Lemma \ref{koszul}(K2) satisfies
$S(E\setminus J, p-j)\leq S(E,p)$. 
If equality holds, then $|p-j|<p$ if $j\neq0$. 
\ei
\end{lemma}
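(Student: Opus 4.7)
The proof is a direct case analysis, comparing the score $S(\cdot,\cdot)=|\cdot|+\min\{e',n-e'\}$ of each Koszul term against $S(E,p)$. The key preparatory observation is that the hypothesis on $q$, together with $n=2s+2$, forces the numerical bound $S(E,p)\geq s+2$ in both parity cases; moreover, since $n=2s+2$, one has $\min\{e',2s+2-e'\}=e'$ when $e'\leq s+1$ and $\min\{e',2s+2-e'\}=2s+2-e'$ when $e'\geq s+1$. Under the standing hypothesis $e\leq s+1$ of (1), $S(E,p)=p+e$; under $e\geq s+1$ of (2), $S(E,p)=p+(2s+2-e)$.

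For (1), I would split the analysis of $S(E\cup J,p-j)$ with $j=|J|$ into four sub-cases according to the signs of $p-j$ and $(s+1)-(e+j)$. The sub-case $p\geq j$, $e+j\leq s+1$ yields the equality $S(E\cup J,p-j)=(p-j)+(e+j)=p+e=S(E,p)$. The two sub-cases $e+j>s+1$ (with either sign of $p-j$) give values of the form $p+e-2(e+j-s-1)$ or $2(s+1)-S(E,p)$, both strictly less than $S(E,p)$ once one uses $S(E,p)\geq s+2$. The remaining sub-case $p<j$, $e+j\leq s+1$ would force $e+p+1\leq e+j\leq s+1$, i.e.\ $S(E,p)\leq s$, contradicting $S(E,p)\geq s+2$; hence it does not occur.

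For (2), an entirely analogous four-way split on $S(E\setminus J,p-j)$ based on the signs of $p-j$ and $(e-j)-(s+1)$ works. The sub-case $p\geq j$, $e-j>s+1$ yields equality $S=p+2s+2-e=S(E,p)$, and the sub-case $p\geq j$, $e-j\leq s+1$ yields equality only at $j=e-s-1$ (which requires $j>0$ precisely when $e>s+1$). The sub-case $p<j$, $e-j\leq s+1$ produces $S=e-p\leq s<S(E,p)$, while the sub-case $p<j$, $e-j>s+1$ would give $e-p>s+1$, hence $S(E,p)=p+2s+2-e<s+1$, once more contradicting $S(E,p)\geq s+2$.

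Finally, the strict inequality clause is routine: inspection of every sub-case in which equality $S(\cdot,\cdot)=S(E,p)$ was achieved shows that in each such case one has $p-j\geq 0$, so $|p-j|=p-j<p$ whenever $j\neq 0$. The main obstacle is not conceptual but bookkeeping—keeping the four sub-cases disentangled and recognising that the bound $S(E,p)\geq s+2$ supplied by the hypothesis on $q$ is precisely what eliminates the two configurations that would otherwise violate the desired inequality.
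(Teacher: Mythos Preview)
Your proof is correct and follows essentially the same approach as the paper: a direct case analysis driven by the sign of $p-j$ and the key numerical input $S(E,p)>s+1$ coming from the hypothesis on $q$. The paper's version is slightly more economical, splitting only on the sign of $p-j$ and bounding $\min\{e',n-e'\}$ above by whichever of $e'$ or $n-e'$ is convenient, rather than computing the minimum exactly via your further split on the size of $E\cup J$ (resp.\ $E\setminus J$) relative to $s+1$; but the substance is the same.
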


\bp 
We prove (1). We have  $S(E,p)=p+e$.  If $p-j\geq0$, then 
$$S(E\cup J, p-j)\leq (p-j)+e+j=p+e=S(E,p),$$
and clearly $|p-j|=p-j<p$ if $j\neq0$. 
If $p-j<0$, we prove that the inequality on slopes is strict. We have
$$S(E\cup J, p-j)\leq (j-p)+(n-e-j)=n-p-e<e+p=S(E,p),$$
since $S(E,p)=e+p=2\Bigl\lfloor\frac{s}{2}\Bigr\rfloor+2q>s+1$. 

We prove (2). We have  $S(E,p)=p+(n-e)$.  If $p-j\geq0$, then 
$$S(E\setminus J, p-j)\leq (p-j)+(n-e+j)=p+(n-e)=S(E,p),$$
and clearly $|p-j|=p-j<p$ if $j\neq0$. 
If $p-j<0$, we prove that the inequality on slopes is strict. We have
$$S(E\setminus J, p-j)\leq (j-p)+(e-j)=e-p<p+n-e=S(E,p),$$
since $e-p<s+1$, as
$S(E,p)=p+n-e=2\Bigl\lfloor\frac{s}{2}\Bigr\rfloor+2q>s+1$.
\ep

\bp[Proof of Theorem \ref{full even}]

\noindent {\bf Case $s$ even.}
For any $(E,p)$ write the score $S(E,p)$ as
\begin{equation}
S(E,p)=s+2q.
\end{equation}
Note that if $q\leq0$ then $L_{E,p}$ is already in $\cC$ (Remark \ref{reformulate}). Moreover, if $q\leq 0$, by Lemma \ref{quotients}  $R_{E,p}$ and $Q_{E,p}$ are related by quotients which are direct sums of torsion sheaves of the form $\cO(-a,*)$ or $\cO(*,-a)$, with $0<a\leq |x_T|$. As 
$|x_T|\leq \frac{S(E,p)}{2}\leq\frac{s}{2}<\frac{s+1}{2}$, such quotients are in $\cA$. 
 
We prove by induction on $q\geq0$, and for equal $q$, by induction on $|p|$, that 
$R_{E,p}$, $Q_{E,p}$ with $S(E,p)=s+2q$  
are generated by $\cC$. By Corollary \ref{next torsion 2}, it follows that 
all $\cO_{\de_T}(-a,-b)$ are generated by $\cC$. Then Lemma \ref{quotients} implies then that all line bundles $L_{E,p}$ are generated by $\cC$. 

We now prove the inductive statement. 
For $q\leq0$, we already proved that $R_{E,p}$, $Q_{E,p}$ are generated by $\cC$. 
Assume $q\geq1$. Take a pair $(E,p)$ with score $S(E,p)=s+2q$. Using the $S_2$ symmetry, we may assume $p\geq0$. 
For any $(E',p')$ with strictly smaller score than $s+2q$, or equal score and strictly smaller $|p|$, we have by induction that $Q_{E',p'}$, $R_{E',p'}$ are
generated by $\cC$.  

If $e\leq s+1$, we apply Lemma \ref{koszul} and get a resolution for $Q_{E,p}$. Using Lemma \ref{slopes in K}(i), 
all terms in the resolution are generated by $\cC$ by induction. Hence, $Q_{E,p}$ is generated by $\cC$ if $e\leq s+1$. 
Similarly, using Lemma \ref{koszul}, Lemma \ref{slopes in K}(ii) and induction, $R_{E,p}$ is generated by $\cC$ if $e\geq s+1$. 

We have that both $Q_{E,p}$, $R_{E,p}$ are generated by $\cC$ if $e=s+1$. 
By Corollary \ref{next torsion 2} and the induction assumption, 
$\cO_{\de_T}(-a,0)$, $\cO_{\de_T}(0,-a)$ if $0<a\leq\frac{s}{2}+q$ are generated by $\cC$. 
By Lemma \ref{quotients} we have that $L_{E,p}$ is related to each of $Q_{E,p}$, $R_{E,p}$ by quotients which are direct sums of $\cO_{\de_T}(-a,*)$, $\cO_{\de_T}(*,-a)$ with $0<a\leq\frac{S(E,p)}{2}=\frac{s}{2}+q$. Since for any $e\neq s+1$, one of
$Q_{E,p}$, $R_{E,p}$ is generated by $\cC$, it follows  that $L_{E,p}$ is generated by $\cC$. 

\smallskip

\noindent {\bf Case $s$ odd.}
For any $(E,p)$ write the score $S(E,p)$ as
\begin{equation}
S(E,p)=(s-1)+2q.
\end{equation}

We prove by induction on $q\geq0$, and for equal $q$, by induction on $|p|$, that the line bundles 
$R_{E,p}$ and $Q_{E,p}$ with $S(E,p)=(s-1)+2q$  
are generated by $\cC$. 
This proves the theorem, as Corollary \ref{next torsion 2}  gives that all torsion sheaves supported on boundary are generated by $\cC$. 
The inductive argument we did for $s$ even goes through verbatim if $q\geq2$ (the assumption is used in Lemma \ref{slopes in K}).
Hence, we only need to prove that $R_{E,p}$, $Q_{E,p}$ are generated by $\cC$ for $q=0$ and $q=1$. We may assume w.l.o.g. that $p\geq0$. 

Assume $q=0$. Fix a pair $(E,p)$ with $S(E,p)=s-1$. 
Then $(E,p)$ is in the range of Theorem \ref{even} and $L_{E,p}$ is in $\cC$. As in the previous case, by Lemma \ref{quotients}, the line bundles 
$R_{E,p}$, $Q_{E,p}$ are related to $L_{E,p}$ by quotients generated by $\cA$. Hence, $R_{E,p}$, $Q_{E,p}$ are generated by $\cC$.  

Assume now $q=1$ and fix a pair $(E,p)$ with $S(E,p)=s+1$.
\begin{claim}\label{first bdry}
 $\cO_{\de_T}(-\frac{s+1}{2},0)$, $\cO_{\de_T}(0,-\frac{s+1}{2})$ are generated by $\cC$. 
 \end{claim}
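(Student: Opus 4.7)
The plan is to parallel the last case of the proof of Corollary \ref{next torsion 2}. First, by the $S_2$-symmetry of $\cC$, which on $\de_T = \PP^s \times \PP^s$ swaps the two $\PP^s$ factors and hence exchanges $\cO_{\de_T}(-\frac{s+1}{2}, 0)$ with $\cO_{\de_T}(0, -\frac{s+1}{2})$, it suffices to generate the former. Fix $T$ with $|T| = s+1$ and set $E = T$, $p = 0$; since $s$ is odd, $e + p = s+1$ is even, and the score $S(E, 0) = s+1$ is precisely at the boundary of the range in Theorem \ref{even} (which allows $S \leq s+1$ when $e = s+1$). Hence $L_{E, 0}$ lies in $\cC$.

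By Lemma \ref{ineqX_lemma}, the integers $x_{T', E, 0}$ satisfy $|x_{T'}| \leq \frac{s+1}{2}$, with the maximum $x_{T'} = \frac{s+1}{2}$ realized uniquely at $T' = E$. Lemma \ref{quotients}(i) then yields a filtration connecting $L_{E, 0}$ and $R_{E, 0}$ whose successive quotients are direct sums of torsion sheaves $\cO_{\de_{T'}}(-x_{T'} + i, i)$ with $x_{T'} > 0$ and $0 \leq i < x_{T'}$. The sheaf $\cO_{\de_T}(-\frac{s+1}{2}, 0)$ appears as exactly one such quotient, namely at $T' = E$ and $i = 0$; all other quotients with $i = 0$ have the form $\cO_{\de_{T'}}(-x_{T'}, 0)$ with $1 \leq x_{T'} \leq \frac{s-1}{2}$, which lie directly in $\cA$.

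The main obstacle will be controlling the remaining quotients $\cO_{\de_{T'}}(-x_{T'}+i, i)$ with $i \geq 1$: these have bidegrees of mixed sign and are not directly in $\cA$. The plan to handle them is to invoke Beilinson-type resolutions on $\de_{T'} = \PP^s \times \PP^s$, tensoring with the restrictions to $\de_{T'}$ of suitable line bundles $L_{E', p'} \in \cC$ of smaller score, so as to express each such sheaf as a complex whose terms lie in $\langle \cC \rangle$. Once these intermediate quotients and the endpoint $R_{E, 0}$ (whose membership in $\langle \cC \rangle$ is granted by the ambient induction on $q$ being carried out in the proof of Theorem \ref{full even}) are all generated by $\cC$, the filtration forces $\cO_{\de_T}(-\frac{s+1}{2}, 0)$ to be generated by $\cC$ as well, and the $S_2$-symmetric assertion follows.
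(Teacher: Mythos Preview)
Your proposal has a genuine gap: you assume that $R_{E,0}$ is already known to lie in $\langle\cC\rangle$ ``by the ambient induction on $q$'', but this is circular. Claim~\ref{first bdry} sits inside the $q=1$ step of the induction in the proof of Theorem~\ref{full even} (for $s$ odd), and at that point the induction hypothesis only supplies the $q=0$ case, i.e.\ $R_{E',p'}$, $Q_{E',p'}$ with $S(E',p')=s-1$. The pair $(E,0)$ with $|E|=s+1$ has $S(E,0)=s+1$, which is $q=1$, so $R_{E,0}\in\langle\cC\rangle$ is not yet available --- in fact, Claim~\ref{first bdry} is precisely the lemma used afterward to establish that $R_{E,p}$, $Q_{E,p}$ are generated at $q=1$.

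The paper fills this gap by proving $R_{E,0}\in\langle\cC\rangle$ directly, using the Koszul resolution (K2) of Lemma~\ref{koszul} with $I=E$: this expresses $R_{E,0}$ in terms of the $R_{E\setminus J,-j}$ for $j>0$. For each such term one has $|E\setminus J|\le s$, so $x_T=|(E\setminus J)\cap T|-\frac{s+1}{2}\le\frac{s-1}{2}$ for every $T$; consequently the quotients relating $R_{E\setminus J,-j}$ to $L_{E\setminus J,-j}\in\cC$ are generated by $\cA$, and hence each $R_{E\setminus J,-j}$ lies in $\langle\cC\rangle$. Once $R_{E,0}$ (and by $S_2$-symmetry $Q_{E,0}$) is established, the argument you describe --- relating $L_{E,0}$ and $R_{E,0}$ by quotients and isolating $\cO_{\de_T}(-\frac{s+1}{2},0)$ as the unique quotient not already generated --- goes through (this is the content of Corollary~\ref{next torsion 2} in the $s$ odd, $q=1$ case). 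Your Beilinson idea for the $i\ge1$ quotients is reasonable and is implicitly used in the paper as well, since those quotients have first coordinate $-a$ with $0<a<\frac{s+1}{2}$.
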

 
\bp
By Corollary \ref{next torsion 2}, it suffices to prove that $R_{E,0}$, $Q_{E,0}$ are generated by $\cC$ for some $E$ with $e=|E|=s+1$. Take such an $E$. 
By Remark \ref{symmetry}, $R_{E,0}$ and $Q_{E,0}$ are exchanged by the action of $S_2$. Hence, by symmetry, it suffices to prove that $R_{E,0}$ 
is generated by $\cC$. Consider the resolution in Lemma \ref{koszul}(ii) for $(E,0)$, with $I=E$. The terms that appear, other than $R_{E,0}$,  
are $R_{E\setminus J, -j}$, with $J\subseteq E$, $j>0$. For all $j\geq0$, $S(E\setminus J, -j)=s+1$ and all  $(E\setminus J, -j)$ 
are in the range of Theorem \ref{even}. Hence, $L_{E\setminus J, -j}$ are generated by $\cC$. 

We claim that if $j>0$, the quotients relating $R_{E\setminus J, -j}$ to $L_{E\setminus J, -j}$ are generated by $\cA$. 
By Lemma \ref{quotients} the quotients relating $R_{E\setminus J, -j}$ to $L_{E\setminus J, -j}$ are 
$$\cO_{\de_T}(-x_T+i,i),\quad 0\leq i<x_T=x_{T,E\setminus J,-j}\quad \text{where}$$
$$x_T=|(E\setminus J)\cap T|-\frac{s+1}{2}\leq |E\setminus J|-\frac{s+1}{2}\leq s-\frac{s+1}{2}=\frac{s-1}{2}.$$
The claim follows. It follows that for $j>0$, $R_{E\setminus J, -j}$ is generated by $\cC$. Using the resolution, it follows that $R_{E,0}$ is generated by $\cC$. 
\ep

Assume that $e\leq s+1$. Then $(E,p)$ is in the range of Theorem \ref{even} and $L_{E,p}$ is in $\cC$. Since $R_{E,p}$, $Q_{E,p}$ are related to 
$L_{E,p}$ by quotients $\cO_{\de_T}(-a,*)$, $\cO_{\de_T}(*,-a)$ with $0<a\leq\frac{S(E,p)}{2}=\frac{s+1}{2}$, it follows by Claim \ref{first bdry} that $R_{E,p}$, $Q_{E,p}$ are 
generated by $\cC$. 

Assume now that $e>s+1$. Then $(E,p)$ is not in the range of Theorem \ref{even}. Note that it suffices to prove that $R_{E,p}$ is generated by $\cC$, since by Lemma \ref{quotients} $R_{E,p}$, $L_{E,p}$ are related by quotients which are direct sums of $\cO_{\de_T}(-a,*)$ with $0<a\leq\frac{S(E,p)}{2}=\frac{s+1}{2}$
(generated by $\cC$ by Claim \ref{first bdry}). To prove $R_{E,p}$ is generated by $\cC$, we do an 
induction on $e\geq s+1$ (for $(E,p)$ of fixed score $s+1$) by using a resolution as in Lemma \ref{koszul} for $R_{E,p}$. 
\ep

\begin{rmk}\label{elaborate2} 
For $n=2s+2\geq 2$, the exceptional collection on $Z_n$ given in \cite[Theorem 1.15]{CT_partII} consists of: 

(i) The same torsion sheaves $\cO_{\de_T}(-a,-b)$ as in Theorem \ref{even}.

(ii) The line bundles in the so-called group $1$ (group $1A$ and group $1B$ of that theorem coincide in this case): for all $E\subseteq N$, with $e=|E|$ even, 
\begin{equation}\label{F0E n even}
F_{0,E}=\frac{e}{2}\psi_\infty+\sum_{j\in E} \de_{j\infty}-\sum_{\frac{e}{2}-|E\cap T|>0} \big(\frac{e}{2}-|E\cap T|\big)\de_{T\cup\{\infty\}}. 
\end{equation}

The line bundles $F_{0,E}$ are defined in \cite{CT_partII} as $R\pi_*(N_{0,E})$, for certain line bundles $N_{0,E}$ on the universal family over $Z_n$. One checks directly (or see the proof of \cite[Lemma 5.8]{CT_partII}) 
that $N_{0,E}$ restrict trivially to every component of any fiber of the universal family 
$\pi:\cU\ra Z_n$. Hence, 
$$N_{0,E}=\pi^*F_{0,E},\quad F_{0,E}=\si_u^*N_{0,E},$$ 
for any marking $u$. In particular, for $u\in\{0,\infty\}$, we obtain formula 
(\ref{F0E n even}). 

(iii) The objects in the so-called group $2B$, which in this case are line bundles (corresponding only to the $J=\emptyset$ term in 
\cite[Notation 11.5]{CT_partII}):
$$\cTT_{l,\{u\}\cup E}:=\frac{e-l-1}{2}\psi_u+\sum_{j\in E}\de_{ju}-\sum_{\frac{e-l-1}{2}-|E\cap T|>0}\left(\frac{e-l-1}{2}-|E\cap T|\right)\de_{T\cup\{u\}}$$
where  $u\in\{0,\infty\}$, $E\subseteq N$, $e=|E|$, $l\in\ZZ$, $l\geq0$ such that $|E\cap\{u\}|+l$ is even (i.e., $e+l$ is odd), subject to the condition
$$l+\min\{e,n+1-e\}\leq s\quad (\text{group}\quad 2B).$$

The formula generalizing both expressions  in (ii) and (iii) is 
$$\frac{e-p}{2}\psi_u+\sum_{j\in E}\de_{ju}-\sum_{\frac{e-p}{2}-|E\cap T|>0}\left(\frac{e-p}{2}-|E\cap T|\right)\de_{T\cup\{u\}}$$
which, when $u=\infty$, is exactly the line bundle $V^\vee_{E,p}$ (the dual of $V_{E,p}$ - see (\ref{extraLB}). 
Hence, the group $2B$  with $l=p-1$, $u=\infty$ recovers all the $\{V^\vee_{E,p}\}$ 
when $p>0$. Similarly, the group $2B$ with $l=-p-1$, $u=0$ recovers all the $\{V^\vee_{E,p}\}$ when $p<0$. The elements of group $1$ recover all the 
$\{V^\vee_{E,p}\}$ when $p=0$. 
A similar proof as in this section will prove that the collection in  \cite[Theorem 1.15]{CT_partII} - the torsion sheaves (i) and 
the line bundles $\{V^\vee_{E,p}\}$, for  $(E,p)$ as in Theorem \ref{even}-  is a full exceptional collection. 
\end{rmk}


\section{Pushforward of the exceptional collection on the Losev--Manin space $\LM_N$  to $Z_N$}\label{LM}

We refer to \cite{CT_partI} for background on Losev--Manin spaces. 
Recall that the Losev--Manin moduli space  $\LM_N$ is the Hassett space with markings $N\cup\{0,\infty\}$
and weights $(1,1,\frac{1}{n},\ldots,\frac{1}{n})$, where  $n=|N|$. 
The space $\LM_N$ parametrizes  nodal linear chains of $\bP^1$'s marked by $N\cup\{0,\infty\}$ 
with $0$ is on the left tail and $\infty$ is on the right tail of the chain. 
Both   $\psi_0$ and $\psi_\infty$ 
induce birational morphisms $\LM_N\to\bP^{n-1}$ (Kapranov models) which realize $\LM_N$ as an iterated blow-up of $\bP^{n-1}$ in $n$ points (standard basis vectors) followed by blowing up $n\choose 2$ proper transforms of lines connecting points, etc.
In particular, $\LM_N$ is a toric variety of dimension $n-1$. Its  toric orbits (or their closures, the boundary strata
as a moduli space) are given by partitions $N=N_1\sqcup\ldots\sqcup N_k$, $|N_i|>0$ for all $i$, which
correspond to boundary strata
$$Z_{N_1,\ldots, N_k}=\de_{N_1\cup\{0\}}\cap \de_{N_1\cup N_2\cup\{0\}}\cap\ldots\cap
\de_{N_1\cup \ldots\cup N_{k-1}\cup\{0\}}$$ 
which parametrizes (degenerations of) linear chains of $\bP^1$'s with points marked by, respectively,
$N_1\cup\{0\}$, $N_2$,\ldots, $N_{k-1}$, $N_k\cup\{\infty\}$. 
We can identify
$$Z_{N_1,\ldots, N_k}\simeq \LM_{N_1}\times\ldots\times\LM_{N_k},$$ 
where the left node of every $\PP^1$ is marked by $0$
and the right node by $\infty$. 

There are forgetful maps
$\pi_K:\, \LM_N\to \LM_{N\setminus K}$,  
for all $K\subseteq N$, $1\le|K|\le n-1$, given by forgetting points marked by $K$ and stabilizing.

\begin{defn}[\protect{\emph{cf.} \cite[Definition 1.4]{CT_partI}}]
The cuspidal block $D^b_{cusp}(\LM_N)$ consists of objects $E\in D^b(\LM_N)$ such that for all $i\in N$ we have 
$$R{\pi_i}_*E=0.$$
\end{defn}

\begin{prop}[\protect{\emph{cf.} \cite[Proposition 1.8]{CT_partI}}]\label{LM decomp}
There is  a semi-orthogonal decomposition
$$D^b(\LM_N)=\langle D^b_{cusp}(\LM_N), \ \{\pi_K^*D^b_{cusp}(\LM_{N\setminus K})\}_{K\subset N},\ \cO\rangle$$
where subsets $K$ with $1\le |K|\le n-2$ are ordered by  increasing cardinality.
\end{prop}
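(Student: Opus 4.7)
The plan is to verify the three standard conditions for a semi-orthogonal decomposition --- full faithfulness of each block, semi-orthogonality in the prescribed order, and generation of $D^b(\LM_N)$ --- by induction on $n = |N|$, with the base case $n \le 1$ reducing to $\LM_N = \mathrm{pt}$ and the trivial decomposition $\langle\cO\rangle$.

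\emph{Full faithfulness.} Each forgetful map $\pi_K\colon \LM_N \to \LM_{N\setminus K}$ is flat and proper with connected rational fibers (iterated Losev--Manin spaces, in particular nodal chains of $\PP^1$'s with vanishing higher cohomology), so $R\pi_{K*}\cO_{\LM_N} = \cO_{\LM_{N\setminus K}}$. By the projection formula, $R\pi_{K*}\circ\pi_K^* = \mathrm{id}$, giving full faithfulness of each $\pi_K^*$.

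\emph{Semi-orthogonality.} The key observation is that cuspidality of $B \in D^b_{cusp}(\LM_N)$ forces $R\pi_{K*}B = 0$ for every non-empty $K \subseteq N$, as seen by factoring $\pi_K$ through any singleton $\pi_i$ with $i \in K$. This immediately yields $\Hom(\pi_K^*A, B) = \Hom(A, R\pi_{K*}B) = 0$ and $\Hom(\cO, B) = R\Ga(\LM_N, B) = 0$, placing $D^b_{cusp}(\LM_N)$ leftmost and $\cO$ rightmost; the analogous argument applied to $A \in D^b_{cusp}(\LM_{N\setminus K})$ (using any $j \in N\setminus K$, which is non-empty since $|K| \le n-2$) shows $\Hom(\cO, \pi_K^*A) = R\Ga(A) = 0$. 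For two intermediate blocks with $|K_1| < |K_2|$, I would combine adjunction with the commutative square
\[
\begin{CD}
\LM_N @>\pi_{K_2}>> \LM_{N\setminus K_2}\\
@V\pi_{K_1}VV @VV\pi_{K_1\setminus K_2}V\\
\LM_{N\setminus K_1} @>\pi_{K_2\setminus K_1}>> \LM_{N\setminus(K_1\cup K_2)}
\end{CD}
\]
to rewrite $\Hom(\pi_{K_2}^*A_2,\pi_{K_1}^*A_1)$ in terms of a pushforward of $A_1$ along $\pi_{K_2\setminus K_1}$. Since $|K_1| < |K_2|$ forces $K_2\setminus K_1 \neq \emptyset$, picking any $i$ in it and factoring through $\pi_i$ on $\LM_{N\setminus K_1}$ kills the expression by cuspidality of $A_1$.

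\emph{Generation.} I would induct on $n$. Given $F \in D^b(\LM_N)$, the right mutations of $F$ through $\cO$ and through each block $\pi_K^* D^b_{cusp}(\LM_{N\setminus K})$ (ordered by decreasing $|K|$) produce a residue lying in the common kernel of all $R\pi_{i*}$, which is $D^b_{cusp}(\LM_N)$ by definition; concretely, each $R\pi_{i*} F \in D^b(\LM_{N\setminus\{i\}})$ decomposes by the inductive hypothesis into smaller blocks, and pulling back via $\pi_i^*$ places the contributions into the matching blocks upstairs. The principal obstacle is the base-change used in semi-orthogonality: the square above is \emph{not} Cartesian in general, since the natural morphism $\LM_N \to \LM_{N\setminus K_1}\times_{\LM_{N\setminus (K_1\cup K_2)}}\LM_{N\setminus K_2}$ is a nontrivial toric blowup along higher-codimension strata. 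This is resolved either by invoking the blowup formula $R\sigma_*\cO = \cO$ for this toric blowup, reducing to honest flat base change on the fiber product, or --- as in \cite{CT_partI} --- by iterating Orlov's blowup semi-orthogonal decomposition along the Kapranov tower realizing $\LM_N$ as a sequence of blowups of $\PP^{n-1}$.
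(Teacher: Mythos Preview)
The paper does not prove this proposition; it is quoted verbatim from \cite[Proposition~1.8]{CT_partI}, where it is established by iterating Orlov's blow-up semi-orthogonal decomposition along the Kapranov tower for $\LM_N$. Your direct verification of the three axioms is a different route, and the full-faithfulness and leftmost/rightmost semi-orthogonality parts are clean and correct.

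There is, however, a slip in the intermediate semi-orthogonality step. You assert that $\sigma:\LM_N\to\LM_{N\setminus K_1}\times_{\LM_{N\setminus(K_1\cup K_2)}}\LM_{N\setminus K_2}$ is a toric \emph{blow-up}. A dimension count shows the fiber product has dimension $n-1-|K_1\cap K_2|$, so $\sigma$ is birational only when $K_1\cap K_2=\emptyset$; otherwise it has positive relative dimension and is not a blow-up. The fix is easy and already implicit in your argument: since $|K_2|>|K_1|$, pick $i\in K_2\setminus K_1$ and factor $\pi_{K_2}$ through the single forgetful $\pi_i$. For the square with $\pi_i$ and $\pi_{K_1}$ one has $\{i\}\cap K_1=\emptyset$, so $\sigma$ \emph{is} birational between toric varieties, $R\sigma_*\cO=\cO$ holds, and flat base change on the fiber product gives $R\pi_{i*}\pi_{K_1}^*A_1=\pi_{K_1}^*R\pi_{i*}A_1=0$ by cuspidality.

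Your generation sketch is the weaker part. Saying that $R\pi_{i*}F$ decomposes by induction and then ``pulling back via $\pi_i^*$ places the contributions into the matching blocks upstairs'' does not work as stated: the pullbacks from the various $\LM_{N\setminus\{i\}}$ overlap (e.g.\ $\pi_i^*\pi_K^*D^b_{cusp}$ and $\pi_j^*\pi_{K'}^*D^b_{cusp}$ can land in the same block), and it is not clear that the successive mutations terminate in something annihilated by \emph{every} $R\pi_{i*}$. This is exactly where the Orlov blow-up approach of \cite{CT_partI} earns its keep: generation comes for free from the blow-up formula, and one only has to match the Orlov pieces with the cuspidal blocks. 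If you want a self-contained inductive argument, you should instead mutate $F$ past $\cO$ and then past a \emph{single} $\pi_i^*D^b(\LM_{N\setminus\{i\}})$ (not all of them), which by full faithfulness of $\pi_i^*$ and induction already generates everything except $D^b_{cusp}(\LM_N)$.
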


\begin{defn}[\protect{\emph{cf.} \cite[Definition 1.9]{CT_partI}}]
Let $\bG_N=\{G_1^\vee,\ldots,G_{n-1}^\vee\}$ be the set of following line bundles on $\LM_N$:
$$G_a=a\psi_{0}-(a-1)\sum_{k\in N}\de_{k0}-(a-2)\sum_{k,l\in N}\de_{kl0}-\ldots-\sum_{J\subset N, |J|=a-1}\de_{J\cup\{0\}}
$$
for every $a=1,\ldots,n-1$.
Let $\hat\bG$ be the collection of sheaves of the form
$$\cT=(i_Z)_*\cL,\quad \cL=G_{a_1}^\vee\boxtimes\ldots\boxtimes G_{a_t}^\vee$$
for all \emph{massive} strata $Z=Z_{N_1,\ldots,N_t}$, i.e., such that $N_i\ge 2$ for every $i$ and for all $1\leq a_i\leq |N_i|-1$. 
Here $i_Z:\,Z\hookrightarrow \LM_N$ is the inclusion map.
If $t=1$ we get line bundles $\bG_N$ and for $t\geq2$ these sheaves are torsion sheaves.
 \end{defn}
 
\begin{thm}[\protect{\emph{cf.} \cite[Theorem 1.10]{CT_partI}}]\label{LM main}
$\hat\bG$ is a full exceptional  collection in $D^b_{cusp}(\LM_N)$,
which is invariant under the group $S_2\times S_N$.
\end{thm}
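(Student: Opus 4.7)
My plan is to first establish $S_2\times S_N$-invariance of $\hat\bG$, then exceptionality, then membership in $D^b_{cusp}(\LM_N)$, and finally fullness by induction using Proposition \ref{LM decomp}.

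For the group action, $S_N$-invariance is built into the construction since we sum over subsets $J\subseteq N$ symmetrically and the strata $Z_{N_1,\ldots,N_t}$ are permuted among themselves. The nontrivial check is $S_2$-invariance: one must verify that the involution swapping $0$ and $\infty$ sends $G_a$ to some shift/twist of $G_{|N_i|-a}$ (or similar) on each factor. Using Lemma \ref{psi_i is zero on LM} ($\psi_i=0$ for $i\in N$) together with the standard relations on $\LM_N$ expressing $\psi_\infty$ in terms of $\psi_0$ and boundary, this becomes a direct computation with boundary classes.

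For exceptionality, I would order $\hat\bG$ so that torsion sheaves on strata of smaller dimension come first (equivalently, larger $t$), and within equal $t$ order arbitrarily. The Ext computations split into three cases: (a) between two line bundles $G_a^\vee,G_b^\vee$, where one uses the Kapranov blow-up model $\LM_N\to\PP^{n-1}$ and the fact that the $G_a$ are pulled back from tautological classes on the blow-up tower (the calculation reduces to a known collection on $\PP^{n-1}$); (b) between a torsion sheaf on a stratum $Z$ and a line bundle, where the inclusion--restriction sequence and the normal bundle formula reduce the computation to Ext on the product $\LM_{N_1}\times\cdots\times\LM_{N_t}$; (c) between two torsion sheaves, where a Koszul resolution of one stratum inside another (or disjointness of their supports) combined with the K\"unneth formula reduces to case (a) on the factors. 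The main technical obstacle is bookkeeping the normal bundle contributions in case (b): one must show the relevant twist lands outside the window of acyclic line bundles on the intersection of boundaries.

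Membership in $D^b_{cusp}$ amounts to $R\pi_{i*}\cT=0$ for all $i\in N$. For the line bundle $G_a^\vee$ this follows from the projection formula and the fact that $\pi_i$-fibers are $\PP^1$'s on which $G_a$ restricts to $\cO(a)$ with complementary vanishing. For a torsion sheaf $(i_Z)_*\cL$ on a massive stratum, the point to forget lies in some $N_j$ with $|N_j|\ge 2$; the map $\pi_i$ restricts on $Z$ to forgetting a point from the $j$-th factor $\LM_{N_j}$, and the vanishing reduces by K\"unneth to the cuspidal vanishing of $G_{a_j}^\vee$ on $\LM_{N_j}$, handled inductively.

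For fullness, I would induct on $n=|N|$. The base case $n=2$ is trivial. For the inductive step, Proposition \ref{LM decomp} gives
$$D^b(\LM_N)=\langle D^b_{cusp}(\LM_N),\{\pi_K^*D^b_{cusp}(\LM_{N\setminus K})\}_{K},\cO\rangle,$$
so it suffices to show $\hat\bG$ generates the cuspidal block. Applying the inductive hypothesis on each $\LM_{N\setminus K}$ gives full exceptional collections in each $\pi_K^*D^b_{cusp}(\LM_{N\setminus K})$, and together with $\hat\bG$ and $\cO$ these produce an exceptional collection on $\LM_N$ whose length equals the topological Euler characteristic $e(\LM_N)$. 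Since $\LM_N$ is a smooth projective toric variety, $K_0(\LM_N)$ has rank $e(\LM_N)$, so the collection must be full; consequently $\hat\bG$ generates $D^b_{cusp}(\LM_N)$. The hardest step here is verifying the Euler characteristic count, which reduces to the combinatorial identity
$$e(\LM_N)=\sum_{\text{partitions }N_1\sqcup\cdots\sqcup N_t\text{ with all }|N_i|\ge 2}\prod_i(|N_i|-1)+\sum_{K\subsetneq N}e(\LM_{N\setminus K})\cdot(\text{cuspidal count})+1,$$
which can be checked by a generating function argument or by directly comparing both sides to the known formula for the number of maximal torus-fixed chains in $\LM_N$.
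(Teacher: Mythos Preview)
This theorem is not proved in the present paper: it is quoted from \cite{CT_partI} (Theorem~1.10 there) and used as a black box in Section~\ref{LM}. So there is no ``paper's own proof'' to compare against here.

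That said, your outline has a genuine gap in the fullness step. You argue: the semi-orthogonal decomposition of Proposition~\ref{LM decomp} together with the inductive hypothesis produces an exceptional collection on $\LM_N$ whose length equals $e(\LM_N)=\text{rk}\,K_0(\LM_N)$, and therefore this collection is full. This implication is false in general --- an exceptional collection of maximal length need not be full, precisely because of possible phantom subcategories. The present paper even flags this explicitly in the introduction: ``Of course one may further wonder if $\{E_i\}$ is in fact full, which is related to (non)-existence of phantom categories on $X$, another difficult open question.'' So the counting argument does not close the loop; you would still need to show that your collection actually generates some known full exceptional collection (e.g.\ the one coming from Orlov's blow-up formula applied to the Kapranov tower $\LM_N\to\PP^{n-1}$), or give some other direct generation argument. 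The proof in \cite{CT_partI} does exactly this sort of direct work rather than relying on a numerical count.

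A smaller point: in your cuspidality sketch you say ``$\pi_i$-fibers are $\PP^1$'s on which $G_a$ restricts to $\cO(a)$''. For $R\pi_{i*}G_a^\vee=0$ you need $G_a^\vee$ to restrict to $\cO(-1)$ on generic fibers (and the right behavior over boundary), not $\cO(-a)$; and the fibers over boundary strata are reducible, so the verification requires more care than a single degree computation.
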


Clearly, by Theorem \ref{LM main}, Proposition \ref{LM decomp} and adjointness, we have the following
\begin{cor}\label{S is enough}
If $E\in D^b(Z_N)$ is such that $R\Hom(E, F)=0$ for all $F$ of the form $Rp_*({\pi_K}^*\hat\GG)$, for all $K\subseteq N$, including $K=\emptyset$, then $E=0$. 
\end{cor}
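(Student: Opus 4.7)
The plan is to use the adjunction $Lp^*\dashv Rp_*$ to transport the orthogonality hypothesis from $Z_N$ to $\LM_N$, and then invoke Proposition \ref{LM decomp} together with Theorem \ref{LM main} to conclude that $Lp^*E$ vanishes, hence $E$ vanishes.

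First, I would translate the hypothesis via adjunction. Since $p:\LM_N\to Z_N$ is a proper morphism between smooth projective varieties, $Lp^*\dashv Rp_*$ and for each generator $F=\pi_K^*\hat\GG$ we have
$$R\Hom_{Z_N}(E,Rp_*F)\;=\;R\Hom_{\LM_N}(Lp^*E,F)\;=\;0.$$
Thus $Lp^*E$ is right-orthogonal to every object in the family $\{\pi_K^*\hat\GG\}_{K\subseteq N}$.

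Next, I would show this family classically generates $D^b(\LM_N)$. By Theorem \ref{LM main}, each $\hat\GG$ on $\LM_{N\setminus K}$ is a full exceptional collection in $D^b_{cusp}(\LM_{N\setminus K})$, so it generates this block; applying the triangulated functor $\pi_K^*$ yields a generating set for the subcategory $\pi_K^*D^b_{cusp}(\LM_{N\setminus K})$. Proposition \ref{LM decomp} assembles these pieces, together with the line $\langle\cO_{\LM_N}\rangle$, into a semi-orthogonal decomposition of $D^b(\LM_N)$. The terminal $\langle\cO_{\LM_N}\rangle$ factor is accounted for by the extreme case where $|N\setminus K|=1$: then $\LM_{N\setminus K}$ is a point, $\hat\GG$ reduces to $\{\cO\}$, and $\pi_K^*\cO=\cO_{\LM_N}$. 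Since $Lp^*E$ is right-orthogonal to a classical generator of $D^b(\LM_N)$, standard SOD theory forces $Lp^*E=0$.

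To conclude $E=0$, I would use the projection formula together with the identity $Rp_*\cO_{\LM_N}=\cO_{Z_N}$, which holds because $p$ is a birational morphism between smooth projective varieties (in fact, a composition of blow-ups along smooth centers, given the structure of reduction maps between Hassett spaces). Then
$$E\;\simeq\;E\otimes^L Rp_*\cO_{\LM_N}\;\simeq\;Rp_*Lp^*E\;=\;0.$$
The only real subtlety is the bookkeeping around the $\cO_{\LM_N}$ summand in Proposition \ref{LM decomp}; once one adopts the convention that $\hat\GG_{N'}$ for a singleton $N'$ is $\{\cO_{\text{pt}}\}$, every piece of the SOD is hit by some $\pi_K^*\hat\GG$ in the hypothesized family, and the argument is essentially formal.
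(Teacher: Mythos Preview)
Your proof is correct and is precisely a fleshed-out version of the paper's one-line justification (``by Theorem \ref{LM main}, Proposition \ref{LM decomp} and adjointness''): adjunction $Lp^*\dashv Rp_*$ transfers orthogonality to $\LM_N$, the cuspidal blocks together with $\cO$ generate $D^b(\LM_N)$, and $Rp_*\cO=\cO$ gives $E\simeq Rp_*Lp^*E=0$. Your handling of the $\langle\cO\rangle$ summand via the convention $\hat\GG_{\text{pt}}=\{\cO\}$ is a reasonable bookkeeping device; in the paper this is implicit, since the later Corollaries (\ref{rewrite}, \ref{rewrite2}) explicitly list $Rp_*\cO=\cO$ among the pushforwards to be generated.
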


We now proceed to calculate the objects in the collection $Rp_*({\pi_K}^*\hat\GG)$. 
\begin{prop}\label{pull-backs push}
Let $p:\LM_N\ra Z_N$ be the reduction map. 

\bi
\item[(1) ] For all $I\subseteq N$ with $0\leq |I|\leq n-2$ and all $1\leq a\leq n-|I|-1$, we have
$$Rp_*\big(\pi_I^*G_a^\vee\big)=-a\psi_0-\sum_{j\in N\setminus I}\de_{j0}\quad \text{if $n$ is odd},$$
$$Rp_*\big(\pi_I^*G_a^\vee\big)=-a\psi_0-\sum_{j\in N\setminus I}\de_{j0}+\sum_{J\subseteq N, |J|=\frac{n}{2}, |J\cap (N\setminus I)|< a} \big(a- |J\cap (N\setminus I)|\big) \de_{J\cup\{0\}},$$
if $n$ is even. Moreover, $Rp_*\cO=\cO$. 
\item[(2) ] If $n$ is odd, all the torsion sheaves and their pull-backs, i.e, all sheaves $\cT$ in the collection  $\hat \GG$ not considered in (1), have $Rp_*(\cT)=0$. 
\item[(3) ] If $n$ is even, we have 
$Rp_*\big(G^\vee_{a_1}\boxtimes\ldots\boxtimes G^\vee_{a_t}\big)=0$,  
except for sheaves $G^\vee_a\boxtimes G^\vee_b$ with support $Z=\LM_{N_1}\times \LM_{N_2}$, where $|N_1|=|N_2|=\frac{n}{2}$, when
$$Rp_*\big(G^\vee_a\boxtimes G^\vee_b\big)=\cO(-a)\boxtimes\cO(-b),$$
where we use the identification $p(Z)=\PP^{\frac{n}{2}-1}\times \PP^{\frac{n}{2}-1}$. 
\item[(4) ] If  $n$ is even, $I\neq\emptyset$ and $\cT\in \hat \GG_{N\setminus I}$ is a torsion sheaf, then either
$$Rp_*\big(\pi_I^*\cT\big)=0,$$ or $Rp_*\big(\pi_I^*\cT\big)$ is generated by the sheaves 
$\cO(-a)\boxtimes\cO(-b)$  supported on the images $\PP^{\frac{n}{2}-1}\times \PP^{\frac{n}{2}-1}$ of strata $\LM_{N_1}\times \LM_{N_2}$ with $|N_1|=|N_2|=\frac{n}{2}$ and with $0<a,b\leq \frac{n}{2}-1$. 
\ei
\end{prop}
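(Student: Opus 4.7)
The strategy for all four parts is to apply the projection formula along $p:\LM_N\to Z_N$, using that $p$ contracts the boundary divisors $\de_{I\cup\{0\}}$ and $\de_{I\cup\{\infty\}}$ with $1\le|I|\le s=\lfloor(n-1)/2\rfloor$, while for $n$ even the boundary divisors with $|I|=n/2$ are identified across $0\leftrightarrow\infty$ and map isomorphically onto the Kirwan exceptional divisors $\de_{T\cup\{\infty\}}=\PP^s\times\PP^s\subseteq Z_N$. The key structural input is the Kapranov identification: $G_a$ on $\LM_{N\setminus I}$ is the pullback of $\cO_{\PP^{n-|I|-1}}(a)$ under $|\psi_0|:\LM_{N\setminus I}\to\PP^{n-|I|-1}$, so $\pi_I^*G_a$ is the pullback of $\cO(a)$ under the composition $\LM_N\to\LM_{N\setminus I}\to\PP^{n-|I|-1}$.

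For Part (1), I first show $R^ip_*(\pi_I^*G_a^\vee)=0$ for $i>0$ by restricting to the fibers of $p$ over contracted strata and observing that these are products of projective spaces on which the restriction of $\pi_I^*G_a^\vee$ is acyclic in the relevant range. I then identify the line bundle $p_*(\pi_I^*G_a^\vee)$ by computing the pullback of the candidate formula via Corollary~\ref{pull by p} and matching with $\pi_I^*G_a^\vee$ modulo $p$-exceptional divisors. For $n$ odd, the identities (\ref{P1-bundle identities}) collapse the answer to $-a\psi_0-\sum_{j\in N\setminus I}\de_{j0}$. For $n$ even, Lemma~\ref{relations2} together with the iterated-sum structure of $G_a$ produces the additional boundary contributions $(a-|J\cap(N\setminus I)|)\de_{J\cup\{0\}}$ precisely when $|J\cap(N\setminus I)|<a$. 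The equality $Rp_*\cO=\cO$ is immediate, as $p$ is a birational morphism of smooth projective toric varieties.

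For Parts (2), (3), and (4), a torsion sheaf $\cT=(i_Z)_*(G_{a_1}^\vee\boxtimes\cdots\boxtimes G_{a_t}^\vee)$ has support $Z=\LM_{N_1}\times\cdots\times\LM_{N_t}$, and $p|_Z$ admits a uniform description: every component of the chain whose marking weight sum fails to exceed $1$ after reduction is contracted to a point, which collapses the corresponding factor $\LM_{N_i}$ entirely; the surviving factors map via their Kapranov morphisms $|\psi_0|_i:\LM_{N_i}\to\PP^{|N_i|-1}$. By the projection formula and K\"unneth, $Rp_*\cT$ vanishes as soon as any collapsed factor $\LM_{N_i}$ with $|N_i|\ge 2$ appears, because $R\Gamma(\LM_{N_i},G_{a_i}^\vee)=R\Gamma(\PP^{|N_i|-1},\cO(-a_i))=0$ for $0<a_i\le|N_i|-1$. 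For $n$ odd, some factor is always collapsed when $t\ge 2$ (since both end weight-sum conditions $|N_1|,|N_t|\ge(n+1)/2$ cannot simultaneously hold, and every interior factor is unconditionally collapsed), yielding Part~(2). For $n$ even, the only case where no factor is collapsed is $t=2$ with $|N_1|=|N_2|=n/2$, where $p|_Z$ realizes $|\psi_0|_1\times|\psi_0|_2$ composed with the identification $\PP^s\times\PP^s\cong\de_{N_2\cup\{\infty\}}$, producing the stated $\cO(-a)\boxtimes\cO(-b)$ and proving Part~(3). Part~(4) follows by decomposing $\pi_I^{-1}(Z)\subseteq\LM_N$ into strata $\LM_{N_1\cup I_1}\times\cdots\times\LM_{N_t\cup I_t}$ over ordered partitions $I=I_1\sqcup\cdots\sqcup I_t$ and applying the Part~(3) analysis component by component; nonzero contributions come precisely from partitions where two enlarged factors have cardinality $n/2$.

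The main obstacle is the combinatorial bookkeeping in Part (1) for $n$ even: matching the coefficient $(a-|J\cap(N\setminus I)|)$ on each $\de_{J\cup\{0\}}$ with $|J|=n/2$ requires careful tracking of how the iterated sums defining $G_a$ interact with the pullback formulas of Corollary~\ref{pull by p}. This is a routine but intricate computation rather than a conceptual difficulty.
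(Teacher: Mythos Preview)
Your approach to Part~(1) is close to the paper's in spirit, but the claim that the fibers of $p$ are ``products of projective spaces'' is not correct: they are Losev--Manin spaces (iterated blow-ups of projective spaces), so fiberwise acyclicity is not as immediate as you suggest. The paper avoids this by writing, in the Kapranov model $H=\psi_0$, $E_J=\de_{J\cup\{0\}}$, the difference $\pi_I^*G_a^\vee-p^*(\text{candidate})$ as a positive combination $\Sigma^1+\Sigma^2$ of $p$-exceptional divisors, checking the explicit bound (coefficient of $E_J$) $\leq \codim p(E_J)-1$, and then applying the standard blow-up fact $Rp_*\cO(iE)=\cO$ for $1\le i\le r$ (Lemma~\ref{push exceptional}) iteratively. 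Your ``matching modulo $p$-exceptional divisors'' is essentially this last step, but the coefficient bound is what makes it work, and you do not mention it.

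For Parts~(2)--(3) with $I=\emptyset$ your argument matches the paper's. The genuine gap is in Parts~(2) and~(4) when $I\neq\emptyset$. You propose to decompose $\pi_I^{-1}(Z)$ into its irreducible components $B_{\bar I}=\LM_{N_1\cup I_1}\times\cdots\times\LM_{N_t\cup I_t}$ and ``apply the Part~(3) analysis component by component.'' This does not work as stated: $\pi_I^*\cT$ is a single sheaf on the reducible scheme $\pi_I^{-1}(Z)$, not a direct sum over its components, so $Rp_*\pi_I^*\cT$ is not the direct sum of the $Rp_*$ of the restrictions. To filter by components you need exact sequences of the form
\[
0\to\cO_{B_1\cup\cdots\cup B_{i-1}}(-B_i)\to\cO_{B_1\cup\cdots\cup B_i}\to\cO_{B_i}\to 0,
\]
and the twists $\cO(-B_i)|_{B_j}$ introduce nontrivial line bundles on the factors that can destroy the acyclicity you need (they are no longer of the form $G_a^\vee\boxtimes\cdots$). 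The paper handles exactly this difficulty, and it is the technical heart of the proof: it first resolves $i_{Z*}\cO_Z$ by a Koszul complex inside a single boundary divisor $D_1=\LM_{K_1}\times\LM_{K_2\sqcup\cdots\sqcup K_t}$, then invokes a separate lemma (Lemma~\ref{lift}) to lift $G^\vee_{a_2}\boxtimes\cdots\boxtimes G^\vee_{a_t}$ to an acyclic line bundle $\cM$ on $\LM_{K_2\sqcup\cdots\sqcup K_t}$ with the property that every $\cM(-D'_{i_1}-\cdots-D'_{i_k})$ is also acyclic. Only with this carefully chosen lift do the correction terms in the component filtration of $\pi_I^{-1}(D_1)$ remain acyclic on each factor, so that the K\"unneth/vanishing argument goes through. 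Your proposal contains no analogue of this lifting step, and without it the component-by-component reduction breaks down.
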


We use here that if $n=2s+2$ is even, 
the restriction of the map $p$ to a stratum of the form $\LM_{s+1}\times \LM_{s+1}$ is a product of reduction maps of type 
$\LM_{s+1}\ra\M_{\ba}$, where $\ba=(1,\frac{1}{2}+\eta,\frac{1}{n},\ldots, \frac{1}{n})$ (with $\frac{1}{n}$ appearing $(s+1)$ times). 
By \cite[Remark 4.6]{Ha}, we have $\M_{0,\cA}=\PP^s$ (the Kapranov model of $\LM_{s+1}$ with respect to the first marking).

\bp[Proof of Proposition \ref{pull-backs push}]
Throughout, we denote $s:=\left\lfloor\frac{n-1}{2}\right\rfloor$. 
We first prove (1). As $p$ is a birational morphism between smooth projective varieties, we have $Rp_*\cO=\cO$. 
We write $\pi_I^*G_a^\vee$ and $p^*\big(-a\psi_0-\sum_{j\in N\setminus I} \de_{j0}\big)$
in the Kapranov model with respect to the $0$ marking. We denote
$$H:=\psi_0,\quad E_J:=\de_{J\cup\{0\}}\quad (J\subseteq N,\quad |J|\leq n-2),$$
be the hyperplane class and the exceptional divisors respectively. We have: 

$$\pi_I^*G_a^\vee=-aH+\sum_{J\subseteq N, |J|\geq1, |J\cap (N\setminus I)|< a} \big(a- |J\cap (N\setminus I)|\big) E_J.$$ 
$$p^*\big(-a\psi_0-\sum_{j\in N\setminus I} \de_{j0}\big)=-aH+\sum_{J\subseteq N, 1\leq|J|\leq s} \big(a- |J\cap (N\setminus I)|\big) E_J,$$
where the last equality follows by Corollary \ref{pull by p}. It follows that 
$$\pi_I^*G_a^\vee=p^*\big(-a\psi_0-\sum_{j\in N\setminus I} \de_{j0}\big)+\Sigma^1+\Sigma^2,$$
where $\Sigma^1$ consists of all the terms that appear in $\pi_I^*G_a^\vee$, but do not appear in 
$p^*\big(-a\psi_0-\sum_{j\in N\setminus I} \de_{j0}\big)$, and
$\Sigma^2$ consists of the terms that appear in $p^*\big(-a\psi_0-\sum_{j\in N\setminus I} \de_{j0}\big)$, but do not in $\pi_I^*G_a^\vee$, 
taken with a negative sign:
$$\Sigma^1=\sum_{J\subseteq N, |J|\geq1, |J\cap (N\setminus I)|< a, |J|>s} \big(a- |J\cap (N\setminus I)|\big) E_J,$$
$$\Sigma^2=\sum_{J\subseteq N, |J|\geq1, |J\cap (N\setminus I)|> a, |J|\leq s} \big(|J\cap (N\setminus I)|-a\big) E_J.$$

When $|J|\leq s$, the codimension of $p(E_J)$ in $Z_N$ is $|J|$. For the terms in the sum $\Sigma^2$, the coefficient of $E_J$ 
satisfies $$|J\cap (N\setminus I)|-a\leq |J|-1=\codim(p(E_J))-1.$$
Hence, one may apply Lemma \ref{push exceptional} successively to the terms of the sum $\Sigma^2$. We use here that the map $p$ can be decomposed into a sequence of blow-ups, with exceptional divisors $\de_{J\cup\{0\}}$, $\de_{J\cup\{\infty\}}$, with $1\leq |J|\leq s$, in order of increasing $|J|$. Note that 
the divisors $E_J$ with fixed $|J|$ are disjoint. 

Similarly, when $|N\setminus J|\leq s$,  the codimension of $p(E_J)$ in $Z_N$ is $|N\setminus J|$.  For the terms in the sum $\Sigma^1$, 
the coefficient of $E_J$ with  $|N\setminus J|\leq s$, satisfies 
$$a-|J\cap (N\setminus I)|\leq n-1-|I|-|J\cap (N\setminus I)|\leq n-1-|J|=\codim(p(E_J))-1,$$
so one may apply again Lemma \ref{push exceptional} to the terms of the sum $\Sigma^1$ which satisfy $|N\setminus J|\leq s$. 
When $n=2s+1$, the inequality   $|N\setminus J|\leq s$ is equivalent to $|J|>s$. However, when $n=2s+2$, the inequality  
$|N\setminus J|\leq s$ is equivalent to $|J|>s+1$. Hence, in the case when $n=2s+2$, one is left with the terms in the sum $\Sigma^1$ 
that have $|J|=s+1$. This proves (1). 

\smallskip

Now we turn to the torsion objects, i.e., objects of the form $\pi_I^*(\cT)$, where 
$$\cT={i_Z}_*\big(G^\vee_{a_1}\boxtimes\ldots\boxtimes G^\vee_{a_t}\big),\quad 
Z=\LM_{K_1}\times\ldots\times\LM_{K_t},$$ where $I\subseteq N$, $N\setminus I=K_1\sqcup\ldots\sqcup K_t$ and $|K_j|\geq2$, for all $j$. 
Consider first the case when $I=\emptyset$. If $|K_1|\leq s$, the map $Z\ra p(Z)$ is a product of reduction maps, the first of which is the constant map
 $\LM_{K_1}\ra pt$. It follows in this case that $Rp_*(\cT)=0$, since $R\Gamma(G^\vee_{a_i})=0$. The same argument applies when $|K_t|\leq s$. 
It follows that $Rp_*(\cT)=0$, except possibly in the case when $n=2s+2$, $t=2$ and $|K_1|=|K_2|=s+1$. In this case, the map 
$Z\ra p(Z)$ is a product of Kapranov maps $\LM_{s+1}\times \LM_{s+1}\ra \PP^s\times\PP^s$, and it follows 
(for example by Lemma \cite[Lemma 5.7]{CT_partI}) that in this case
$Rp_*(\cT)=\cO_{\PP^s}(-a)\boxtimes\cO_{\PP^s}(-b)$.  
This proves (3) and the case $I=\emptyset$ of (2). 

Consider now the case when $I\neq\emptyset$. To compute $Rp_*(\pi_I^*\cT)$, consider
the boundary divisors $D_1, D_2, \ldots, D_{t-1}$ whose intersection is $Z$, denoting 
$$D_i=\LM_{K_1\sqcup\ldots\sqcup K_i}\times \LM_{K_{i+1\sqcup\ldots\sqcup K_t}}\quad (i=1,\ldots, t-1).$$
For the remaining part of the proof, we denote for simplicity 
$$K'=K_1,\quad K''=K_2\sqcup\ldots\sqcup K_t$$
and consider the canonical inclusions
$$i_1: Z\hra D_1=\LM_{K'}\times\LM_{K''},\quad i_{D_1}: D_1\ra \LM_{N\setminus I}.$$ 
We resolve ${i_1}_*\cO_Z$ using the Koszul complex 
$$\ldots \ra\oplus_{2\leq i<j\leq t}\cO(-D_i-D_j)_{|D_1}\ra  \oplus_{2\leq i\leq t}\cO(-D_i)_{|D_1}\ra\cO_{D_1}\ra {i_1}_*\cO_Z\ra0.$$
By our choice of $D_1$, for all $2\le i \le t$ we have $\cO(D_i)_{|D_1}=\cO\boxtimes\cO(D'_i)$, for the corresponding boundary divisor 
on $\LM_{K''}$:
$$D'_i=\LM_{K_2\sqcup\ldots\sqcup K_i}\times \LM_{K_{i+1\sqcup\ldots\sqcup K_t}}.$$

By Lemma  \ref{lift}, we may choose a line bundle $\cM$ on $\LM_{K''}$ such that the restriction of $\cM$ to the massive stratum 
$\LM_{K_2}\times\ldots\times\LM_{K_t}$ is $G^\vee_{a_2}\boxtimes\ldots\boxtimes G^\vee_{a_t}$ and 
$\cM\otimes\cO(-D'_{i_1}-\ldots-D'_{i_k})$ is acyclic for any $2\le i_1<\ldots<i_k\le t$. 

Consider the line bundle $\cL=G^\vee_{a_1}\boxtimes\cM$ on $D_1$.  Then $\cL_{|Z}=G^\vee_{a_1}\boxtimes\ldots\boxtimes G^\vee_{a_t}$.
We now: (1) Tensor the Koszul sequence with $\cL$, (2) Apply $R{i_{D_1}}_*(-)$, and (3) Apply $L\pi_I^*(-)$. Since $\pi_I$ is flat, we obtain a resolution 
for $\pi_I^*\cT$ with sheaves whose support is contained in $\pi_I^{-1}(D_1)$. To prove (4) and the remaining part of (2), it suffices to show that 
for all $2\le i_1<\ldots<i_k\le t$ 
$$Rp_*\pi_I^*R{i_{D_1}}_*\big(\cL\otimes \cO(-D_{i_1}-\ldots-D_{i_k})_{|D_1}\big)$$ 
is $0$ when $n$ is odd, or generated by the sheaves $\cO(-a)\boxtimes\cO(-b)$ ($a,b>0$) supported on the divisors 
$\PP^{\frac{n}{2}-1}\times \PP^{\frac{n}{2}-1}$ as in (4), when $n$ is even. 
Here we need the same statement also for $Rp_*\pi_I^*R{i_{D_1}}_*\big(\cL\big)$ (i.e., $k=0$). 
Note that 
$$\cL\otimes \cO(-D_{i_1}-\ldots-D_{i_k})_{|D_1}=G^\vee_{a_1}\boxtimes\big(\cM\otimes\cO(-D'_{i_1}-\ldots-D'_{i_k})\big).$$ 

There is a commutative diagram
\begin{equation*}
\begin{CD}
\pi_I^{-1}(D_1)     @>{i_{\pi^{-1}(D_1)}}>>  \LM_N@>p>> Z_N\\
@V{\rho_I}VV        @V{\pi_I}VV\\
D_1  @>{i_{D_1}}>>  \LM_{N\setminus I} 
\end{CD}
\end{equation*}
where ${i_{\pi_I^{-1}(D_1)}}$ is the canonical inclusion map and $\rho_I$ is the restriction of ${\pi_I}$ to $\pi_I^{-1}(D_1)$. 
Let $q=p \circ i_{\pi^{-1}(D_1)}$. 
As $\pi_I$ is flat, we have
$$Rp_*\pi_I^*R{i_{D_1}}_*\big(\cL\otimes \cO(-D_{i_1}-\ldots-D_{i_k})_{|D_1}\big)=
Rq_*\rho_I^*\big(\cL\otimes \cO(-D_{i_1}-\ldots-D_{i_k})_{|D_1}\big).$$

The preimage  $\pi_I^{-1}(D_1)$ has several components $B_{I_1,I_2}$:
$$B_{I_1,I_2}=\LM_{K'\cup I_1}\times \LM_{K''\cup I_2}\quad \text{for every partition}\quad I=I_1\sqcup I_2$$ 

We order the set $\{B_{I_1,I_2}\}$ as follows: $B_{I_1,I_2}$ must come before $B_{J_1,J_2}$ if $|I_1|>|J_1|$ and in a random order if $|I_1|=|J_1|$. Hence, if $B_{I_1,I_2}$ comes before $B_{J_1,J_2}$, then $B_{I_1,I_2}\cap B_{J_1,J_2}\neq\emptyset$ if and only if $J_1\subsetneq I_1$, in which case, the intersection takes the form
 $$B_{I_1,I_2}\cap B_{J_1,J_2}=\LM_{K'\cup J_1}\times \LM_{(I_1\setminus J_1)}\times \LM_{K''\cup I_2}.$$
For simplicity, we rename the resulting ordered sequence as $B_1, B_2, \ldots, B_r$. A consequence of the ordering is that 
$B_r$ is the component $B_{I_1,I_2}$ with $I_1=\emptyset$, $I_2=I$, and if $1\leq i\leq r-1$ and 
$B_i$ is $B_{I_1,I_2}=\LM_{K'\cup I_1}\times \LM_{K''\cup I_2}$, then
$$\cO_{B_{i}}(B_{i+1}+\ldots+B_r)=\cO(\sum_{J\subsetneq I_1}\de_{J\cup K'\cup\{0\}})\boxtimes\cO=
\cO(\sum_{\emptyset\neq S\subseteq I_1}\de_{S\cup\{x\}})\boxtimes\cO,$$ 
where the first sum runs over all $J\subsetneq I_1$ (including $J=\emptyset$), while for the second sum we use the identification 
$$\de_{J\cup K'\cup\{0\}}=\de_{(I_1\setminus J)\cup\{x\}}=\LM_{J\cup K'}\times\LM_{I_1\setminus J},$$ 
as divisors in $\LM_{K'\cup I_1}$ (with $x$ being the attaching point). 
Consider now the following exact sequences resolving $\cO_{\pi_I^{-1}(D_1)}=\cO_{B_1\cup\ldots\cup B_r}$:

$$0\ra \cO_{B_1\cup\ldots\cup B_{r-1}}(-B_r)\ra\cO_{B_1\cup\ldots\cup B_r}\ra\cO_{B_r}\ra 0,$$
$$0\ra \cO_{B_1\cup\ldots\cup B_{r-2}}(-B_{r-1}-B_r)\ra\cO_{B_1\cup\ldots\cup B_{r-1}}(-B_r)\ra\cO_{B_{r-1}}(-B_r)\ra 0,$$
$$\vdots$$
$$0\ra \cO_{B_1}(-B_2-\ldots-B_r)\ra\cO_{B_1\cup B_2}(-B_3-\ldots-B_r)\ra\cO_{B_2}(-B_3-\ldots-B_r)\ra 0.$$

We tensor all the above exact sequences with $\cL\otimes \cO(-D_{i_1}-\ldots-D_{i_k})_{|D_1}$ and apply first 
$\rho_I^*(-)$, then $Rq_*(-)$.  As the restriction of the map ${\rho_I}$ to a component $B_i$ of the form $B_{I_1,I_2}$ for some partition $I=I_1\sqcup I_2$,
is the product of forgetful maps 
$\pi_{I_1}\times\pi_{I_2}$, it follows that, if $i\neq r$, then
\begin{align*}
\cO_{B_{i}}(-B_{i+1}-\ldots-B_r)&\otimes\rho_I^*\big(\cL\otimes \cO(-D_{i_1}-\ldots-D_{i_k})_{|D_1}\big)=\\
&\big(\pi_{I_1}^*G^\vee_{a_1}\otimes\cO(-\sum_{\emptyset\neq S\subseteq I_1}\de_{S\cup\{x\}})\big)\boxtimes 
\pi_{I_2}^*\big(\cM\otimes\cO(-D'_{i_1}-\ldots-D'_{i_k})\big),
\end{align*}
while 
$$\cO_{B_{s}}\otimes\rho_I^*\big(\cL\otimes \cO(-D_{i_1}-\ldots-D_{i_k})_{|D_1}\big)=G^\vee_{a_1}\boxtimes \pi_I^*\big(\cM\otimes\cO(-D'_{i_1}-\ldots-D'_{i_k})\big).$$
(Recall that $B_r$ corresponds to the partition $I_1=\emptyset$, $I_2=I$.)

We claim that both components of all the above sheaves are acyclic. To prove the claim, 
recall that $\cM\otimes\cO(-D'_{i_1}-\ldots-D'_{i_k})$ is acyclic by the choice of $\cM$. We are left to prove that $\pi_{I_1}^*(G^\vee_{a_1})\otimes\cO(-\sum_{\emptyset\neq S\subseteq I_1}\de_{S\cup\{x\}})$ is acyclic when $I_1\neq\emptyset$. Since we may rewrite the line bundle $G^\vee_{a_1}$ using the $x$ marking, we are done by the following:
\begin{claim}
Consider the forgetful map $\pi_I: \LM_{N\cup I}\ra \LM_N$ for some subset $I\neq\emptyset$. 
For all $1\leq b\leq |N|-1$, the line bundle 
$\pi_{I}^*(G^\vee_{b})\otimes\cO(-\sum_{\emptyset\neq S\subseteq I}\de_{S\cup\{0\}})$ is acyclic. 
\end{claim}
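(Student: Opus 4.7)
The plan is to factor $\pi_I$ through one-step forgetful maps and peel off the divisor $\de_{\{i,0\}}$ one marking at a time. Fix any $i\in I$ and set $I'=I\setminus\{i\}$, so that $\pi_I=\pi_{I'}\circ\pi_i$ with $\pi_i:\LM_{N\cup I}\to\LM_{N\cup I'}$ the single-marking forgetful map. My first step is to establish the divisor identity
$$\sum_{\emptyset\neq S\subseteq I}\de_{S\cup\{0\}} \;=\; \pi_i^*\Bigl(\sum_{\emptyset\neq S'\subseteq I'}\de_{S'\cup\{0\}}\Bigr)+\de_{\{i,0\}}$$
on $\LM_{N\cup I}$. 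This is routine bookkeeping starting from the standard pullback formula $\pi_i^*\de_{S'\cup\{0\}}=\de_{S'\cup\{0\}}+\de_{S'\cup\{i,0\}}$ for $\emptyset\neq S'\subseteq I'$ -- the forgotten marking $i$ lies either on the $0$-side or on the opposite side of the boundary stratum, each with multiplicity one -- and then matching the two sums term by term, with the $\de_{\{i,0\}}$ term (corresponding to $S=\{i\}$) left over.

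Using the identity, the line bundle in question factors as
$$L \;:=\; \pi_I^*(G_b^\vee)\otimes\cO\Bigl(-\!\!\sum_{\emptyset\neq S\subseteq I}\de_{S\cup\{0\}}\Bigr) \;=\; \pi_i^*L'\otimes\cO(-\de_{\{i,0\}}),$$
where $L':=\pi_{I'}^*(G_b^\vee)\otimes\cO(-\sum_{\emptyset\neq S'\subseteq I'}\de_{S'\cup\{0\}})$ lives on $\LM_{N\cup I'}$. By the projection formula $R\pi_{i*}L=L'\otimes R\pi_{i*}\cO(-\de_{\{i,0\}})$, so the entire claim collapses to the single vanishing
$$R\pi_{i*}\cO(-\de_{\{i,0\}})=0.$$
For this I would identify $\pi_i$ with the universal family $\cU\to\LM_{N\cup I'}$ via \cite[2.1.1]{Ha}; under this identification $\de_{\{i,0\}}$ is the section $\sigma_0$, so $\pi_i$ restricts to an isomorphism on $\de_{\{i,0\}}$, giving $R\pi_{i*}\cO_{\de_{\{i,0\}}}=\cO_{\LM_{N\cup I'}}$. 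Since the fibers of $\pi_i$ are connected nodal chains of $\PP^1$'s one also has $R\pi_{i*}\cO=\cO_{\LM_{N\cup I'}}$. Pushing forward the short exact sequence $0\to\cO(-\de_{\{i,0\}})\to\cO\to\cO_{\de_{\{i,0\}}}\to 0$ yields a triangle $R\pi_{i*}\cO(-\de_{\{i,0\}})\to\cO\to\cO$; the map on the right is a nonzero (hence invertible) endomorphism of the structure sheaf, forcing the first term to vanish.

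The only mildly delicate point is the divisor identity in the first paragraph; once it is in hand everything else is a formal consequence of the projection formula. In particular the factor $G_b^\vee$ plays no role in the final vanishing, so the argument is uniform in $b$ and does not require any induction on $|I|$ beyond the one-step factorization.
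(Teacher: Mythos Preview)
Your proof is correct and takes a genuinely different route from the paper's. The paper writes out $\pi_I^*(G_b^\vee)$ and $\cO(-\sum_{\emptyset\neq S\subseteq I}\de_{S\cup\{0\}})$ explicitly in the Kapranov model with respect to the $0$ marking, obtaining a divisor of the form $-bH+\sum m_J E_J$ with all coefficients $m_J$ in the range $[0,\,n-3-|J|]$, and then invokes the explicit acyclicity criterion of Lemma~\ref{easy acyclic}. Your argument instead factors through a single forgetful map $\pi_i$, reduces via the projection formula to the vanishing $R\pi_{i*}\cO(-\de_{\{i,0\}})=0$, and establishes the latter by identifying $\de_{\{i,0\}}$ with the section $\sigma_0$ of the universal family.

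Your approach is more conceptual and, as you note, independent of the particular line bundle $G_b^\vee$: it shows that \emph{any} line bundle of the form $\pi_i^*M\otimes\cO(-\de_{\{i,0\}})$ is acyclic. The paper's approach, by contrast, is computational but self-contained within the Kapranov-model framework already set up for the surrounding Proposition~\ref{pull-backs push}, and it directly exhibits the numerical bounds. Both arguments are short; yours avoids the coefficient bookkeeping, while the paper's avoids the (admittedly standard) identification of $\LM_{N\cup I}$ with the universal family and of $\de_{\{i,0\}}$ with $\sigma_0$.
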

\bp
Using the Kapranov model with respect to the $0$ marking, we have
$$\pi_{I}^*(G^\vee_{b})=-bH+\sum_{J\subseteq N\cup I, |J\cap N|< b}(b-|J\cap N|) E_J,\quad
\cO\left(-\sum_{\emptyset\neq S\subseteq I}\de_{S\cup\{0\}}\right)=-\sum_{\emptyset\neq S\subseteq I}E_S.$$
As $b-|J\cap N|-1\geq0$, the result follows by Lemma \ref{easy acyclic}.
\ep

Recall that the map $p$ either contracts 
$B_{I_1,I_2}=\LM_{K'\cup I_1}\times \LM_{K''\cup I_2}$
by mapping $\LM_{K'\cup I_1}$ to a point if $|I_1+K'|<\frac{n}{2}$, or by mapping 
$LM_{K''\cup I_2}$ to a point if $|I_2+K''|<\frac{n}{2}$), 
or, we have $|I_1+K'|=|I_2+K''|=\frac{n}{2}$ and $p(B_{I_1,I_2})$ is a divisor in $Z_N$ which is isomorphic to 
$\PP^{\frac{n}{2}-1}\times \PP^{\frac{n}{2}-1}$. Hence, 
$$Rq_*\big(\cO_{B_{i}}(-B_{i+1}-\ldots-B_s)\otimes\rho_I^*\big(\cL\otimes \cO(-D_{i_1}-\ldots-D_{i_k})_{|D_1}\big)\big),$$
$$Rq_*\big(\cO_{B_{s}}\otimes\rho_I^*\big(\cL\otimes \cO(-D_{i_1}-\ldots-D_{i_k})_{|D_1}\big)\big),$$
are either $0$ or they are supported on the divisors $\PP^{\frac{n}{2}-1}\times \PP^{\frac{n}{2}-1}$ as above 
(in particular, $n$ is even). In the latter case, writing $n=2s+2$, as both components of the above sheaves are acyclic, such objects are generated 
by $\cO(-a)\boxtimes\cO(-b)$ for $0<a,b\leq s$. We use here that if $\cA$ is an object in $D^b(\LM_{s+1})$ with $R\Ga(\cA)=0$
and $f:\LM_{s+1}\ra\PP^s$ is a Kapranov map, then $Rf_*\cA$ has the same property, therefore it is generated by 
$\cO(-a)$, for $0<a\leq s$. Using the above exact sequences, 
$$Rq_*\rho_I^*\big(\cL\otimes \cO(-D_{i_1}-\ldots-D_{i_k})_{|D_1}\big)$$
is either $0$ or, when $n$ is even, generated by 
$\cO(-a)\boxtimes\cO(-b)$ ($0<a,b\leq s$) on $\PP^s\times \PP^s$. 
Proposition \ref{pull-backs push} now follows.
\ep

\begin{lemma}[\emph{cf.} \protect{\cite[Lemma 4.6]{CT_partI}}]\label{easy acyclic}
Consider the divisor $D=-dH+\sum m_IE_I$ 
on $\LM_N$ written in some Kapranov model. The divisor $D$ is acyclic if 
$$1\leq d\leq n-3, \quad 0\leq m_I\leq n-3-|I|.$$
\end{lemma}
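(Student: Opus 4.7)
The approach is to compute $H^\bullet(\LM_N, D)$ via iterated pushforward along the Kapranov blow-up map $\pi : \LM_N \to \PP^{n-1}$ associated to $|\psi_0|$. Recall that $\pi$ factors as a sequence of smooth blow-ups $X_{n-2} \to X_{n-3} \to \cdots \to X_0 = \PP^{n-1}$, where $X_k \to X_{k-1}$ blows up the disjoint union of the proper transforms of all $(k-1)$-dimensional linear spans of $k$-element subsets of the base points $q_1, \ldots, q_n$. In this decomposition, the exceptional divisor $E_I$ with $|I| = k$ arises at step $k$ from a smooth center of codimension $c_I = n - k$.

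The main input is the standard projection formula for a single blow-up $\rho : \tilde Y \to Y$ along a smooth codimension $c$ center $Z$, with exceptional $E = \PP(N^\vee_{Z/Y})$: for any line bundle $L$ pulled back from $Y$,
\[
R\rho_*\, L(mE) \;=\; L \qquad \text{for all } 0 \le m \le c - 1,
\]
with higher direct images vanishing. This is proved by induction on $m$ from the short exact sequences $0 \to \cO((m-1)E) \to \cO(mE) \to \cO_E(mE) \to 0$, combined with the vanishing of $R^\bullet(\rho|_E)_* \cO_{E/Z}(-m)$ for $0 < m < c$, coming from the cohomology of the projective bundle $E$. I would apply this formula repeatedly, pushing $D$ down the tower stage by stage in reverse order, contracting at step $k = n-2, n-3, \ldots, 1$ all $E_I$ with $|I| = k$ simultaneously (they are mutually disjoint within $X_k$ by construction of the Kapranov factorization). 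Since $0 \le m_I \le n - 3 - |I| = c_I - 3$ lies comfortably in $[0, c_I - 1]$, the projection formula strips each $E_I$ contribution without introducing new cohomology, while the terms $m_J E_J$ with $|J|$ below the current level descend unchanged as pullbacks. After all stages, $R\pi_*\,D = \cO_{\PP^{n-1}}(-d)$, whose cohomology vanishes for $1 \le d \le n - 2$, and in particular for the assumed range $1 \le d \le n - 3$.

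The main technical obstacle is the bookkeeping across stages: at each intermediate variety $X_k$, one must verify that the divisor, once expressed in terms of $H$ and the remaining $E_J$ with $|J| \le k$, still has coefficients in the permissible range, and that the $E_I$ of fixed level $k$ are indeed disjoint inside $X_k$ (this is standard in the Kapranov construction, since higher-dimensional strict transforms meet earlier exceptional loci only in codimension $\ge 2$). The slack between $m_I \le c_I - 3$ and the naively sufficient bound $c_I - 1$ is precisely what absorbs the interactions between exceptional divisors of different levels during the pushforward; the complete combinatorial argument is carried out in \cite[Lemma 4.6]{CT_partI}, to which one may defer for the detailed verification.
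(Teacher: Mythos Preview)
Your proposal is correct. The paper gives no proof of this lemma at all; it simply records the statement and cites \cite[Lemma~4.6]{CT_partI}. Your sketch --- pushing $D$ down the Kapranov tower one level at a time, using $R\rho_*\cO(mE)=\cO$ for $0\le m\le c-1$ at each smooth blow-up, and landing on $\cO_{\PP^{n-1}}(-d)$ --- is exactly the standard argument behind that citation.

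One small comment on your last paragraph: the slack between $m_I\le c_I-3$ and the sufficient bound $c_I-1$ is in fact not consumed by any ``interactions''. For $|J|<k$ the centers $Z_I$ with $|I|=k$ are never contained in $E_J$ on $X_{k-1}$ (the proper transform of $L_I$ meets but does not lie in the exceptional divisor over the strictly smaller $L_J$), so $\rho_k^*E_J$ is just the proper transform $E_J$ with no $E_I$ correction terms. The same holds for $H$. Thus at every stage the divisor on $X_k$ is literally $\rho_k^*(\text{divisor on }X_{k-1})+\sum_{|I|=k}m_I E_I$ with the original coefficients, the $E_I$ of level $k$ are pairwise disjoint (the wonderful--compactification property), and the pushforward goes through with no bookkeeping beyond this. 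So your argument actually proves the stronger bound $0\le m_I\le n-1-|I|$; the lemma as stated just happens to be what is needed in the paper.
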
 

The following lemma is well known:
\begin{lemma}\label{push exceptional}
Let $p: X\ra Y$ be a blow-up of a smooth subvariety $Z$ of codimension $r+1$ of a smooth projective variety $Y$. Let $E$ be the exceptional divisor. Then 
for all $1\leq i\leq r$ we have $Rp_*\cO_X(iE)=\cO_Y$. 
\end{lemma}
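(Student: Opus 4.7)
The plan is to prove the vanishing by induction on $i$, using the short exact sequences that compare $\cO_X(iE)$ to $\cO_X((i-1)E)$.

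First I would recall that for a blow-up $p:X\to Y$ of a smooth codimension $(r+1)$ subvariety $Z\subseteq Y$, the exceptional divisor $E$ is a $\PP^r$-bundle over $Z$ via $p|_E$, and $\cO_E(E)\cong\cO_E(-1)$ with respect to this projective bundle structure. Hence $\cO_E(iE)\cong\cO_E(-i)$. Since $p$ is a birational morphism of smooth projective varieties, $Rp_*\cO_X=\cO_Y$, which is the base case $i=0$.

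For the inductive step, consider the short exact sequence
$$0\to\cO_X((i-1)E)\to\cO_X(iE)\to\cO_E(-i)\to0.$$
Applying $Rp_*$ gives a distinguished triangle, so it suffices to show $Rp_*\cO_E(-i)=0$ for $1\leq i\leq r$. Since $p|_E:E\to Z$ is a $\PP^r$-bundle and $\cO_E(-i)$ restricts on each fiber $\PP^r$ to $\cO_{\PP^r}(-i)$, this reduces by cohomology and base change (or the projection formula together with vanishing on projective bundles) to the classical fact that $H^j(\PP^r,\cO_{\PP^r}(-i))=0$ for all $j\geq 0$ whenever $1\leq i\leq r$. The case $j=0$ is elementary, and the higher cohomologies vanish by Serre duality: $H^j(\PP^r,\cO(-i))\cong H^{r-j}(\PP^r,\cO(i-r-1))^{\vee}$, and $i-r-1<0$ in the required range.

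Combining, the natural map $Rp_*\cO_X((i-1)E)\to Rp_*\cO_X(iE)$ is a quasi-isomorphism for each $1\leq i\leq r$, and induction starting from $Rp_*\cO_X=\cO_Y$ gives $Rp_*\cO_X(iE)=\cO_Y$ for all such $i$. There is no real obstacle here: the only point to watch is the sign convention $\cO_E(E)=\cO_E(-1)$ (so that $\cO_E(iE)$ is a \emph{negative} twist on fibers, making it acyclic in the stated range), and the fact that the vanishing $H^j(\PP^r,\cO(-i))=0$ fails exactly at $i=r+1$, which is precisely why the statement is restricted to $i\leq r$.
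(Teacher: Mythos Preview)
Your argument is correct and is exactly the standard proof. Note that the paper does not actually give a proof of this lemma: it simply states that the result is well known and moves on. Your write-up supplies precisely the expected details (the $\PP^r$-bundle structure of $E$, the identification $\cO_E(E)\cong\cO_E(-1)$, the inductive short exact sequences, and the vanishing of $H^\bullet(\PP^r,\cO(-i))$ for $1\le i\le r$), so there is nothing to compare.
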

 
 \begin{lemma}\label{lift}
Let $Z=\LM_{N_1}\times\ldots\times\LM_{N_t}$ be a massive stratum in $\LM_N$ and let $D_1,\ldots, D_{t-1}$ be the boundary divisors whose 
intersection is $Z$. Let 
$$\cT=\cT_1\boxtimes\ldots\boxtimes\cT_t$$ be a sheaf supported on $Z$, with either $\cT_i=\cO$ or $\cT_i=G^\vee_{a_i}$, for some $1\leq a_i< |N_i|$, and not all $\cT_i=\cO$. Then there exists a line bundle $\cL$ on $\LM_N$ such that:
\bi
\item[(a) ] $\cL_{|Z}=\cT$; 
\item[(b) ] $\cL$ is acyclic;
\item[(c) ] For all $1\le i_1<\ldots<i_k\le t$, the restriction $\cL_{|D_{i_1}\cap\ldots\cap D_{i_k}}$ is acyclic. 
\ei
In addition, $\cL\otimes\cO(-D_{i_1}-\ldots-D_{i_k})$ is acyclic for all $1\le i_1<\ldots<i_k\le t$. 
\end{lemma}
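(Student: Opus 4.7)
The plan is to set $\cL := \Pi^*(\cT_1 \boxtimes \cdots \boxtimes \cT_t)$, where $\pi^{(j)} \colon \LM_N \to \LM_{N_j}$ is the forgetful morphism dropping all markings in $N \setminus N_j$ and
\[
\Pi := (\pi^{(1)}, \ldots, \pi^{(t)}) \colon \LM_N \longrightarrow \prod_{j=1}^t \LM_{N_j}.
\]
The structural key is that under the isomorphism $Z \simeq \prod_j \LM_{N_j}$, the restriction $\Pi|_Z$ is the identity. Given a glued chain $(C_1, \ldots, C_t) \in Z$ and $j$ fixed, each subchain $C_{j'}$ with $j' \neq j$ loses all its weight-$\tfrac{1}{n}$ markings under $\pi^{(j)}$; a direct degree count shows every component of such a $C_{j'}$ has $\omega$-twisted degree $\le 0$, so stabilization collapses it entirely to a point, leaving $C_j$ with its attaching nodes turned into the new $0$ and $\infty$. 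Hence $\pi^{(j)}|_Z = \mathrm{pr}_j$ and therefore $\Pi|_Z = \mathrm{id}_Z$, giving property (a), $\cL|_Z = \cT$.

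For (b), I would argue that $\Pi$ is a proper toric morphism with connected fibers of dimension $t-1$ between smooth projective toric varieties, so $R\Pi_* \cO_{\LM_N} = \cO_{\prod \LM_{N_j}}$ by the standard vanishing for such morphisms. The projection formula then yields
\[
R\Gamma(\LM_N, \cL) \;=\; R\Gamma\Bigl(\,\prod_j \LM_{N_j}, \cT_1 \boxtimes \cdots \boxtimes \cT_t\Bigr) \;=\; \bigotimes_j R\Gamma(\LM_{N_j}, \cT_j).
\]
Each nontrivial $\cT_j = G^\vee_{a_j}$ is acyclic on $\LM_{N_j}$: by Proposition \ref{LM decomp} the structure sheaf $\cO$ lies after $G^\vee_{a_j}$ in the semi-orthogonal decomposition, forcing $R\Gamma(G^\vee_{a_j}) = R\Hom(\cO, G^\vee_{a_j}) = 0$. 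As at least one $\cT_j$ is nontrivial, the tensor vanishes. Property (c) is the identical computation stratum by stratum: the intersection $D_{i_1} \cap \cdots \cap D_{i_k}$ is a product $\LM_{M_0} \times \cdots \times \LM_{M_k}$ where each $M_r$ is a union of consecutive blocks $N_j$, the restriction of $\Pi$ is an external product of the analogous morphisms $\Pi_r \colon \LM_{M_r} \to \prod_{j : N_j \subseteq M_r} \LM_{N_j}$, and the same projection-formula argument gives cohomology $\bigotimes_{j=1}^t R\Gamma(\LM_{N_j}, \cT_j) = 0$.

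For the final claim, the acyclicity of $\cL \otimes \cO(-D_{i_1} - \cdots - D_{i_k})$ follows by induction on $k$. Tensoring the Koszul resolution of $\cO_{D_{i_1} \cap \cdots \cap D_{i_k}}$ by $\cL$ gives an exact complex whose intermediate terms $\cL \otimes \cO(-\sum_{\ell \in S} D_{i_\ell})$ for $|S| < k$ are acyclic by the inductive hypothesis (with (b) as base case), while its final term $\cL|_{D_{i_1} \cap \cdots \cap D_{i_k}}$ is acyclic by (c); the associated hypercohomology spectral sequence then forces $R\Gamma(\cL \otimes \cO(-\sum_\ell D_{i_\ell})) = 0$. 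The delicate point of the whole argument is the vanishing $R\Pi_* \cO = \cO$ (and its analogues for the $\Pi_r$). Although standard for proper toric morphisms with connected fibers, verifying it requires a careful description of the fibers of the combined forgetful map via Losev--Manin chain geometry; once that is in hand, every remaining step is a formal consequence of the exceptional-collection structure from \cite{CT_partI}.
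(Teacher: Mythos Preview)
Your argument is correct but follows a genuinely different route from the paper's. You construct $\cL$ as the pullback $\Pi^*(\cT_1\boxtimes\cdots\boxtimes\cT_t)$ along the product of forgetful maps $\Pi=(\pi^{(1)},\ldots,\pi^{(t)}):\LM_N\to\prod_j\LM_{N_j}$, and everything reduces to the single fact $R\Pi_*\cO=\cO$, which you correctly identify as the delicate point. That vanishing does hold: one can factor $\Pi$ as $(\text{id}\times\Pi')\circ\Pi_1$ where $\Pi_1:\LM_N\to\LM_{N_1}\times\LM_{N\setminus N_1}$ is the $t=2$ case and $\Pi'$ is the inductive step; in the $t=2$ case $\Pi_1$ is a flat proper morphism whose fibers are connected nodal genus-$0$ curves, so $R\Pi_{1,*}\cO=\cO$ by cohomology and base change, and the general case follows by induction. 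Your treatment of (c) and of the final Koszul claim is the same as the paper's.

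The paper instead builds $\cL$ inductively and uses a \emph{different} line bundle: when $\cT_1=G^\vee_{a}$ it takes $\cL=G^\vee_{a}\otimes\pi_{N_1}^*\cL'$, where $G^\vee_{a}$ is the line bundle on $\LM_N$ itself (not the pullback of $G^\vee_{a}$ from $\LM_{N_1}$), and $\cL'$ is the inductively constructed bundle on $\LM_{N\setminus N_1}$. Acyclicity then comes from the cuspidal property $R\pi_{i,*}G^\vee_{a}=0$ already established in \cite{CT_partI}, together with the projection formula along $\pi_{N_1}$; the restriction formula $(G^\vee_{a})|_Z=G^\vee_{a}\boxtimes\cO\boxtimes\cdots\boxtimes\cO$ is supplied by \cite[Lemma~4.3]{CT_partI}. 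The paper's approach thus trades your global vanishing $R\Pi_*\cO=\cO$ for facts about $G^\vee_a$ that are already on the shelf. Your construction is more symmetric and gives an explicit closed formula for $\cL$; the paper's is more self-contained relative to the results it cites, since it never needs to analyze the combined forgetful map $\Pi$.
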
 

\bp 
The proof is by induction on $t\geq1$. The statement is trivially true when $t=1$, i.e., when $Z=\LM_n$ (as $\cL=\cT$ and there are no boundary divisors to be considered). In addition, if all but one of the $\cT_i$'s are trivial, say $\cT_i=G^\vee_{a_i}$, we are done by \cite[Lemma 4.3(3)]{CT_partI}, as we can take 
$$\cL=G^\vee_{a_i+|N_1|+\ldots+|N_{i-1}|}.$$ 

Assume now $t\geq2$ and at least two of the $\cT_i$'s are non-trivial. Consider 
$\pi_{N_1}: \LM_N\ra\LM_{N\setminus N_1}$ and let $Z'=\pi_{N_1}(Z)$. Then $Z'$ can be identified with $\LM_{N_2}\times\ldots\times\LM_{N_t}$ and the map $\pi_{N_1}: Z\ra Z'$ is the second projection. Let $\cT'=\cT_2\boxtimes\ldots\boxtimes\cT_t$. By induction, there is an acyclic line bundle $\cL'$ on $\LM_{N\setminus N_1}$ such that $\cL'_{|Z'}=\cT'$ and whose restriction to every stratum containing $Z'$ is also acyclic. If $\cT_1=\cO$, we let 
$\cL=\pi_{N_1}^*\cL'$ and clearly all of the properties are satisfied. If $\cT_1=G^\vee_a$, we define
$\cL=G^\vee_a\otimes\pi_{N_1}^*\cL'.$
Clearly, $\cL_{|Z}=\cT$. By the projection formula, $R{\pi_{N_1}}_*(\cL)=\cL'\otimes R{\pi_{N_1}}_*(G^\vee_a)$. As $R{\pi_i}_*(G^\vee_a)=0$ for all $i$, it follows that $R{\pi_{N_1}}_*(\cL)=0$,i.e., $\cL$ is acyclic. 

The same argument applies to show that the restriction of $\cL$ to a stratum $W$ containing $Z$ is acyclic. Consider such a stratum:
$$W=\LM_{M_1}\times\ldots\times\LM_{M_s},$$
and let $W'=\LM_{M_2}\times\ldots\times\LM_{M_s}$, considered as a stratum in in $\LM_{N\setminus M_1}$.
If $M_1=N_1$, the restriction $\cL_{|W}$ equals $G^\vee_a\boxtimes(\cL'_{|W'})$ and is clearly acyclic. 
If $M_1\neq N_1$, then $M_1=N_1+\ldots+N_i$, with $i\geq2$, and $\pi_{N_1}(W)$ is the stratum $\LM_{M_1\setminus N_1}\times W'$ in 
$\LM_{N\setminus N_1}$. The restriction of $\cL'$ to this stratum has the form $\cL'_1\boxtimes\cL'_2$. Then 
$\cL_{|W}=(G^\vee_a\otimes{\pi'}^*_{N_1}{\cL'_1})\boxtimes\cL'_2$, where $\pi'_{N_1}: \LM_{M_1}\ra\LM_{N_1}$ is the forgetful map. Again, by the projection formula, $\cL_{|W}$ is acyclic. 

We now prove the last assertion in the lemma. As $\cL$, $\cL_{D_i}$ are both acyclic, $\cL(-D_i)$ is acyclic (case $k=1$). The statement follows by induction on $k$ using the Koszul resolution for the intersection $\cap_{j\in I} D_j$,  $I=\{i_1,\ldots, i_k\}$:
$$\ldots \ra\oplus_{l<j, l,j\in I}\cO(-D_l-D_j)\ra  \oplus_{j\in I}\cO(-D_j)\ra\cO\ra\cO_{D_{i_1}\cap\ldots\cap D_{i_k}}\ra0.$$

\ep
 
Proposition \ref{pull-backs push} and Lemma \ref{dictionary} have the following:
\begin{cor}\label{rewrite} 
Assume $n=|N|$ is odd. Let $p:\LM_N\ra Z_N$ be the reduction map. 
For all $I\subseteq N$ with $0\leq |I|\leq n-2$ and all $1\leq a\leq n-|I|-1$, we have
$$Rp_*\big(\pi_I^*G_a^\vee\big)=\cO(-I^c)\otimes z^{2a-|I^c|},\quad I^c=N\setminus I.$$
Alternatively, this is the collection of $P\GG_m$-linearized line bundles 
$$\cO(-E)\otimes z^p,\quad 0\leq |p|\leq e-2,\quad 2\leq e\leq n\quad (e=|E|,\quad E\subseteq N).$$
Moreover, $Rp_*\cO=\cO$ and $Rp_*E=0$ for all other objects $E$ in the collection $\hat \GG$. 
\end{cor}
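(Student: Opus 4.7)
The plan is to deduce Corollary \ref{rewrite} directly from Proposition \ref{pull-backs push} and the dictionary in Lemma \ref{dictionary}. First, I would apply Proposition \ref{pull-backs push}(1) in the case $n$ odd, which yields
$$Rp_*\big(\pi_I^* G_a^\vee\big) = -a\psi_0 - \sum_{j\in I^c}\de_{j0}, \qquad Rp_*\cO = \cO,$$
as divisor classes on $Z_N$, where $I^c = N\setminus I$.

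Next I would translate this expression into a $P\GG_m$-linearized line bundle on $(\PP^1)^n$ by substituting the identifications from Lemma \ref{dictionary}, namely $\psi_0 = \cO\otimes z^{-2}$ and $\cO(\de_{j0}) = pr_j^*\cO(1)\otimes z$. Writing the sum additively, this immediately gives
$$-a\psi_0 - \sum_{j\in I^c}\de_{j0} \;=\; \cO(-I^c)\otimes z^{2a - |I^c|},$$
which is the formula in the corollary.

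To obtain the alternative description, I would set $E := I^c$, $e := |E|$, and $p := 2a - e$. Then $p+e = 2a$ is automatically even (matching the $P\GG_m$-descent condition), and the constraints $|I|\leq n-2$ and $1\leq a\leq n-|I|-1 = e-1$ are equivalent to $2\leq e\leq n$ together with $|p|\leq e-2$. The correspondence $(I,a)\mapsto (E,p)$ is clearly a bijection onto the stated index set, since given such an $(E,p)$ one recovers $a = (p+e)/2 \in \{1,\ldots,e-1\}$ and $I = E^c$.

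Finally, the vanishing $Rp_* E = 0$ for all remaining objects $E$ in the collection $\hat\GG$ is precisely the content of Proposition \ref{pull-backs push}(2), which asserts that in the $n$ odd case every torsion sheaf in $\hat\GG$, as well as every pull-back $\pi_K^*\cT$ of such a torsion sheaf, pushes forward to zero under $p$. There is no real obstacle in this argument; it is essentially a bookkeeping translation, and the only step that merits care is verifying the equivalence of the two parametrizations of the resulting line bundles and checking the parity of $p+e$.
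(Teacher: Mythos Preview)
Your proposal is correct and follows exactly the approach indicated in the paper, which simply states that the corollary follows from Proposition~\ref{pull-backs push} and Lemma~\ref{dictionary}. You have spelled out the dictionary substitution and the bijection $(I,a)\leftrightarrow(E,p)$ in more detail than the paper does, but the logic is identical.
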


Proposition \ref{pull-backs push} and Lemma \ref{dictionary2} have the following:
\begin{cor}\label{rewrite2} 
Assume $|N|=2s+2$ is even. Let $p:\LM_N\ra Z_N$ be the reduction map. 
For all $E\subseteq N$, $e=|E|\geq2$ and all $1\leq a\leq e-1$,  
$$Rp_*\big(\pi_{N\setminus E}^*G_a^\vee\big)=\cO(-E)\big(\sum \left|a-|E\cap T^c|\right|E_T\big)\otimes z^{2a-e},$$
where $\left| a-|E\cap T^c|\right|$ denotes the absolute value of $(a-|E\cap T^c|)$. 
Moreover, $Rp_*\cO=\cO$. For all $G^\vee_a\otimes G^\vee_b$ supported on 
strata $\LM_{s+1}\times\LM_{s+1}$ we have
$$Rp_*\big(G^\vee_a\otimes G^\vee_b\big)=\cO(-a)\boxtimes\cO(-b)\quad (0<a, b\leq s),$$ 
 All other pushforwards are either $0$ or are generated by the above torsion sheaves. 
\end{cor}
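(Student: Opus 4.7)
The plan is to directly combine Proposition \ref{pull-backs push} with the dictionary of Lemma \ref{dictionary2}, treating the statement as a bookkeeping exercise rather than a substantive theorem. For the main formula, I would set $I = N\setminus E$ in Proposition \ref{pull-backs push}(1) so that $N\setminus I = E$, obtaining
$$Rp_*(\pi_{N\setminus E}^*G_a^\vee) = -a\psi_0 - \sum_{j\in E}\de_{j0} + \sum_{|J|=s+1,\ |J\cap E|<a}(a-|J\cap E|)\de_{J\cup\{0\}}.$$
I would then translate each summand into $\GG_m$-linearized line bundles on $W_n$ using Lemma \ref{dictionary2}, working additively on divisor classes: $\psi_0 \leftrightarrow \sum_S E_S - 2[z]$, $\de_{j0} \leftrightarrow [pr_j^*\cO(1)] - \sum_{j\notin S}[E_S] + [z]$, and $\de_{J\cup\{0\}} = \de_{J^c\cup\{\infty\}} \leftrightarrow 2[E_{J^c}]$.

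The first two contributions immediately give $\cO(-E) \otimes z^{2a-e}$ for the pull-back and twist, plus an exceptional-divisor correction $\sum_S (|E\cap S^c| - a)[E_S]$ summed over all $|S| = s+1$. After reindexing the third summand by $S = J^c$ (so the condition $|J\cap E| < a$ becomes $|S\cap E| > e-a$, equivalently $|E\cap S^c| < a$), its contribution to the coefficient of $[E_S]$ is $2(a - |E\cap S^c|)$ in this range and zero otherwise. Adding the two contributions, the total coefficient of $[E_S]$ is $|E\cap S^c| - a$ when $|E\cap S^c| \geq a$ and $a - |E\cap S^c|$ when $|E\cap S^c| < a$, which in both cases equals $|a - |E\cap S^c||$. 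This is precisely the asserted exponent.

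The remaining assertions require essentially no further work. The identity $Rp_*\cO = \cO$ is part of Proposition \ref{pull-backs push}(1), and the formula $Rp_*(G_a^\vee \boxtimes G_b^\vee) = \cO(-a) \boxtimes \cO(-b)$ on strata $\LM_{s+1} \times \LM_{s+1}$ is Proposition \ref{pull-backs push}(3), using that on such a stratum the restriction of $p$ is the product of two Kapranov contractions $\LM_{s+1} \to \PP^s$. Finally, Proposition \ref{pull-backs push}(2) and (4) give that every other object in $\hat\GG$ (torsion sheaves on smaller-dimensional massive strata, and their pull-backs by forgetful maps) pushes forward either to zero or to an object generated by the $\cO(-a)\boxtimes\cO(-b)$ supported on the $\PP^s\times\PP^s$'s, which is exactly what is claimed.

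The only place where real checking happens is the absolute-value consolidation in the first step, but this splits into the two sign cases handled above and is not truly an obstacle; the substance of the proof is entirely contained in Proposition \ref{pull-backs push} and Lemma \ref{dictionary2}, and the corollary is simply the restatement of that content in the $\GG_m$-equivariant language on $W_n$ needed for the fullness arguments of Section \ref{odd section} and Section 4.
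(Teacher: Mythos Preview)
Your proposal is correct and is exactly the intended argument: the paper states the corollary immediately after ``Proposition \ref{pull-backs push} and Lemma \ref{dictionary2} have the following'', and your computation carries out precisely that translation, including the key consolidation into the absolute value $|a-|E\cap T^c||$. One minor slip: for the final clause about ``all other pushforwards'' you should cite parts (3) and (4) of Proposition \ref{pull-backs push} rather than (2) and (4), since part (2) concerns only the odd case; part (3) already covers the $I=\emptyset$ torsion sheaves in the even case (both the nonvanishing balanced strata and the vanishing of the rest).
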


When $n=4$, the map $p:\LM_N\ra Z_N$ is an isomorphism. In particular, the objects in $Rp_*\pi_I^*\hat\GG$ form a full exceptional collection.
However, it is straightforward to see that this is different than the collection in Theorem \ref{even}.


\end{document}